\documentclass[12pt]{article}

\usepackage[a4paper,margin=1.12in]{geometry}
\usepackage[T1]{fontenc}
\usepackage[utf8]{inputenc}
\usepackage{lmodern}
\usepackage{microtype}
\usepackage{amsmath,amsthm,amssymb,mathtools}
\usepackage{enumitem}
\usepackage{booktabs}
\usepackage{array}
\usepackage{graphicx}
\usepackage{tikz}
\usetikzlibrary{arrows.meta,calc,decorations.pathreplacing,positioning}
\usepackage{xcolor}
\usepackage{hyperref}
\usepackage{setspace}
\allowdisplaybreaks

\hypersetup{
  colorlinks=true,
  linkcolor=blue!50!black,
  citecolor=blue!50!black,
  urlcolor=blue!50!black
}

\newtheorem{theorem}{Theorem}[section]
\newtheorem{proposition}[theorem]{Proposition}
\newtheorem{corollary}[theorem]{Corollary}
\newtheorem{lemma}[theorem]{Lemma}
\newtheorem{conjecture}[theorem]{Conjecture}
\newtheorem{problem}[theorem]{Problem}
\theoremstyle{definition}
\newtheorem{definition}[theorem]{Definition}
\theoremstyle{remark}
\newtheorem{remark}[theorem]{Remark}

\newtheorem{example}[theorem]{Example}
\newtheorem*{targettheorem}{Main Theorem}

\newcommand{\Ind}{\operatorname{Ind}}
\newcommand{\N}{\mathrm{N}}
\newcommand{\dist}{\operatorname{dist}}
\newcommand{\mode}{\operatorname{mode}}

\title{\textbf{Closing Trees into Unicyclic Counterexamples}\\[8pt]
\large Independence polynomials that stay unimodal but lose log-concavity}
\author{%
Vadim E.~Levit\textsuperscript{*}\qquad Ohr Kadrawi\textsuperscript{**}\\[6pt]
\small Department of Mathematics, Ariel University, Ariel 40700, \underline{ISRAEL}}
\date{March 17, 2026}

\begin{document}
\maketitle
\begingroup
\renewcommand{\thefootnote}{\ifcase\value{footnote}\or *\or **\else \arabic{footnote}\fi}
\footnotetext[1]{\href{mailto:levitv@ariel.ac.il}{\texttt{levitv@ariel.ac.il}}}
\footnotetext[2]{\href{mailto:orka@ariel.ac.il}{\texttt{orka@ariel.ac.il}}}
\endgroup
\setcounter{footnote}{0}
\renewcommand{\thefootnote}{\arabic{footnote}}

\begin{abstract}
We develop a family-based route to unicyclic graphs whose independence polynomials are unimodal but not log-concave. The paper is organized around one flagship statement: for the explicit KL-closure family $U_{k,r}$, with $r\in\{0,1,2\}$ and admissible $k$, the independence polynomial is unimodal but not log-concave. The proof separates the closure polynomial into a dominant convolution term and a real-rooted correction term. On the non-log-concavity side, we prove symbolically that the penultimate log-concavity inequality fails for every admissible parameter. On the unimodality side, we prove that the main convolution term $H_{k,r}=G_kF_{k+r}$ is unimodal with a controlled mode, using a combination of exact coefficient formulas, Ibragimov's strong-unimodality principle, and a residue-class growth argument. Darroch localization and an adjacent-mode bridge lemma then transfer that mode statement to the full KL closure polynomial. This yields an explicit infinite family of unicyclic graphs with unimodal but non-log-concave independence polynomials. In the exact range $k\le 400$ we further verify that the penultimate break is unique and determine exact mode formulas for $H_{k,r}$, the binomial correction term, and $I(U_{k,r};x)$ itself. The paper also places the KL family inside a broader reservoir program involving Galvin, Ramos--Sun, and Bautista--Ramos trees, from which we obtain substantial universal exact theorems for finite ranges.
\end{abstract}
\medskip
\noindent\textbf{Keywords.} independence polynomial; unimodality; log-concavity; unicyclic graph; one-edge closure; graph polynomial; independent sets.

\smallskip
\noindent\textbf{MSC 2020.} Primary 05C31, 05C69. Secondary 05C05, 05C38, 05C76, 05A20.

\section{Introduction}

For a finite simple graph $G$, let
\[
I(G;x)=\sum_{k=0}^{\alpha(G)} s_k(G)x^k
\]
denote its \emph{independence polynomial}, where $s_k(G)$ counts independent sets of size $k$ and $\alpha(G)$ is the independence number of $G$. A nonnegative sequence $(a_0,a_1,\dots,a_d)$ is \emph{unimodal} if it rises to a peak and then falls, and it is \emph{log-concave} if
\[
a_i^2\ge a_{i-1}a_{i+1}\qquad (1\le i\le d-1).
\]
Every positive log-concave sequence is unimodal. For general background we refer to Brenti~\cite{Brenti1989}; for the classical independence-sequence problem we refer to Alavi, Malde, Schwenk, and Erd\H{o}s~\cite{AlaviEtAl1987}.

The last two years have produced a sharp new picture for tree independence polynomials. Kadrawi and Levit proved that trees of order $26$ already fail log-concavity and organized the first infinite reservoirs $T_{3,m,n}$ and $T_{3,m,n}^{\ast}$~\cite{KadrawiLevit2025}. Galvin constructed a family whose first break can occur arbitrarily close to the top of the coefficient sequence~\cite{Galvin2025}. Bautista-Ramos first showed that arbitrarily many breaks are possible~\cite{BautistaRamos2025}. Very recently, Bautista-Ramos, Guill\'en-Galv\'an, and G\'omez-Salgado reorganized many known tree counterexamples into a common pattern-graph framework, derived linear recurrences for those families, proved infinite families with two and three consecutive breaks together with finite families with four and five consecutive breaks, and identified the circle $|x+1/3|=1/3$ as the non-isolated zero locus for the basic $P_2$-based reservoirs~\cite{BautistaRamosGuillenGalvanGomezSalgado2026}. Ramos and Sun used AI-assisted search to produce many further finite examples~\cite{RamosSun2025}. On the other hand, Li proved in 2026 that the two basic Kadrawi--Levit families are nevertheless always unimodal~\cite{Li2026}. This combination of results naturally suggests a new target: not trees any longer, but unicyclic graphs obtained from those trees by a single edge insertion.

The basic operation of the present paper is therefore the \emph{one-edge closure}
\[
T\longmapsto T+uv,
\]
where $T$ is a tree and $uv$ is a nonedge of $T$. The resulting graph is unicyclic. Our main goal is to use this closure operation to construct serious families of unicyclic graphs whose independence polynomials are still unimodal but no longer log-concave. In particular, we want an approach that handles both symbolic infinite families and computationally certified finite ranges.

\medskip
\begin{targettheorem}[KL-closure theorem]
For $r\in\{0,1,2\}$ and $k\ge 3$ (with $k\ge 4$ when $r=2$), let $U_{k,r}=U_{k,k+r}$ be the unicyclic graph obtained from $T_{3,k,k+r}$ by adding an edge between the exceptional branch vertex $v_1$ and the subdivision vertex $a_{2,1}$ on one fixed $P_2$-arm $v_2a_{2,1}b_{2,1}$ in the $k$-bundle at $v_2$. Then the independence polynomial $I(U_{k,r};x)$ is unimodal but not log-concave. More precisely, the penultimate log-concavity inequality fails.
\end{targettheorem}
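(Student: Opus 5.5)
The plan is to decompose $I(U_{k,r};x)$ by the standard edge-insertion identity
\[
I(T+uv;x)=I(T;x)-x^{2}\,I\bigl(T-N_T[u]-N_T[v];x\bigr),
\]
applied with $u=v_1$ and $v=a_{2,1}$. Equivalently, we expand at the vertex $v_1$ via $I(G;x)=I(G-v_1;x)+x\,I(G-N[v_1];x)$. In $T_{3,k,k+r}$ the graph $T-v_1$ splits into the $k$-bundle side at $v_2$ and the $(k+r)$-bundle side. Each side is a star of $P_2$-arms with pendant decorations. Its independence polynomial is an explicit product of the shape $(1+2x)^{a}$, $(1+x)^{b}$, $x(1+x)^{c}$ plus a similar term. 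I would name the two side polynomials $G_k$ and $F_{k+r}$, so that the dominant term is $H_{k,r}=G_kF_{k+r}$. The remaining contribution, coming from $v_1$ being in the set and the closing edge removing $a_{2,1}$, I would write as a correction $C_{k,r}$. This correction should be a monomial times a product of powers of $(1+x)$ and $(1+2x)$, hence real-rooted, with binomial-type coefficients that are fully explicit. The first step is to write both $H_{k,r}$ and $C_{k,r}$ with closed coefficient formulas, sums of at most two products of binomials.

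For non-log-concavity, only the top three coefficients $s_{\alpha-2},s_{\alpha-1},s_\alpha$ matter. Here $\alpha=\alpha(U_{k,r})$, which I would first show equals $\alpha(T_{3,k,k+r})$: a maximum independent set of the tree avoids $v_1$ or $a_{2,1}$, since it takes the leaves $b_{2,1}$. The top coefficients come from very few near-maximum configurations, so they are low-degree polynomials in $k$, e.g.\ $s_\alpha$ constant, $s_{\alpha-1}$ linear, $s_{\alpha-2}$ quadratic in $k$. I would compute them symbolically for each $r\in\{0,1,2\}$. Then I would verify $s_{\alpha-1}^2<s_{\alpha-2}s_\alpha$ as a polynomial inequality in $k$, valid for $k\ge3$, or $k\ge4$ when $r=2$. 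This is exactly where the small cases are excluded. This step is routine once the coefficients are extracted.

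For unimodality, I would first show that $H_{k,r}$ is unimodal with mode located within an explicit window $\{m,m+1\}$. Each factor that is a product of $(1+x)$ and $(1+2x)$ powers is real-rooted, hence log-concave. By Ibragimov's principle, convolving a log-concave sequence with a unimodal one stays unimodal. The only non-log-concave ingredient is a side polynomial of the form $P+xQ$, and I would show it is unimodal by a direct coefficient comparison in residue classes of the index modulo a small period. Darroch's theorem localizes the mode of each real-rooted piece within one of its mean $P'(1)/P(1)$, which pins down the mode of $C_{k,r}$. Finally, a bridge lemma does the transfer. Suppose $H$ is unimodal with mode $m$, and $C$ is log-concave with mode in $\{m-1,m,m+1\}$, subtracted or added with a small enough weight. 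Then $H\pm C$ is increasing up to $m-1$ and decreasing after $m+1$, and only the comparison at the adjacent indices needs an explicit check.

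The main obstacle is the unimodality of $H_{k,r}$ and the control of its mode, uniformly in $k$. The non-real-rooted side factor forbids a pure Newton-inequality argument. The mean of $H$ and that of $C$ must also be shown to differ by less than one, uniformly. I would attack this by writing the ratio of consecutive coefficients of $H$ as an explicit rational function of the index and $k$. I would then prove monotonicity of the sign of $h_{i+1}-h_i$ separately on each residue class, reducing to finitely many polynomial inequalities in $k$ for large $k$. The remaining small $k$ would be checked exactly.
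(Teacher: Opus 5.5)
Your skeleton is the paper's: a dominant convolution $H_{k,r}=G_kF_{k+r}$ plus a real-rooted additive correction, a top-three-coefficient computation, then Ibragimov, Darroch and an adjacent-mode bridge. However, several load-bearing steps are wrong as you describe them.

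\textbf{Decomposition.} It does not come from $v_1$. The graph $T-v_1$ does not split into the two bundle sides, which remain joined through $v_0$. The ``$v_1$ in the set'' term is $x(1+x)^4F_{k-1}F_{k+r}$, which is not a product of linear powers. The edge-insertion correction $x^2(1+x)^3(1+2x)^{k-1}F_{k+r}$ also contains $F_{k+r}$, and it is subtracted. The split that works is at the root $v_0$. Here $U_{k,r}-v_0$ consists of the $(k+r)$-bundle, with polynomial $F_{k+r}$, and the component through $v_1a_{2,1}v_2$, which is a copy of $T_{3,k-1,0}$ rooted at $a_{2,1}$. Meanwhile $U_{k,r}-\mathrm{N}[v_0]$ is $2k+r+3$ disjoint copies of $P_2$. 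This gives $I(U_{k,r};x)=G_kF_{k+r}+E_{k,r}$ with $E_{k,r}=x(1+2x)^{2k+r+3}$.

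\textbf{Top coefficients.} Your prediction that they are low-degree polynomials in $k$ is false. After dropping $v_2$ or $v_3$ from the unique maximum independent set, the arms of that bundle independently contribute either vertex (except that $a_{2,1}$ is blocked by $v_1$). Hence $s_{\alpha-1}=2^{k-1}+2^{k+r}+2k+r+11$. Also $s_{\alpha-2}$ has leading part $17\cdot 2^{2k+r-1}$, of which $2^{2k+r+3}$ is the top coefficient of $E_{k,r}$, sitting exactly at index $\alpha-2$. The penultimate inequality is therefore an exponential--polynomial inequality decided by $4^k$ terms. It needs an argument such as the paper's recursion $\Phi_r(k+1)-4\Phi_r(k)>0$ from a base case, not a polynomial check in $k$.

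\textbf{Log-concave factor.} The unimodality half has a genuine missing ingredient. Neither factor of $H_{k,r}$ is a product of powers of $(1+x)$ and $(1+2x)$. Both $F_{k+r}=(1+2x)^{k+r}+x(1+x)^{k+r}$ and $G_k=F_4(1+2x)^{k-1}+x(1+x)^kF_3$ are two-term sums. To invoke Ibragimov you must \emph{prove} log-concavity of one of them and unimodality of the other. The paper proves $F_t$ log-concave by a ratio computation, and $G_k$ log-concave by a substantial symbolic factorization of $g_{k,j}^2-g_{k,j-1}g_{k,j+1}$.

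\textbf{Mode of $H_{k,r}$.} Your plan fails as stated. The coefficients contain genuine convolution sums such as $[x^i](1+2x)^{k-1}(1+x)^{k+r}$, so consecutive ratios are not rational functions of $i$ and $k$. Darroch's mean is also unavailable, because $H_{k,r}$ is not known to be real-rooted. The paper's device is as follows:
\begin{itemize}
\item isolate the closed-form piece $F_4(x)(1+2x)^{2k+r-1}$;
\item compute its exact central differences at $m_{k,r}=\lfloor(4k+2r+8)/3\rfloor$ in each residue class of $k$ modulo $3$;
\item bound every coefficient of the remainder by the remainder's value at $x=1$;
\item compare growth: under $k\mapsto k+3$ the differences gain a factor exceeding $216$, the bound a factor below $216$;
\item check finitely many small $k$ exactly.
\end{itemize}

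\textbf{Bridge window.} Allowing the correction's mode anywhere in $\{m-1,m,m+1\}$ would leave a possible dip at $m$ to rule out. The paper's bridge needs no extra comparison, because the Darroch window of $E_{k,r}$ lies inside $\{m_{k,r},m_{k,r}+1\}$. The sum is then weakly increasing through $m_{k,r}$ and weakly decreasing from $m_{k,r}+1$.
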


The paper is organized around this short statement, and Theorem~\ref{thm:Uk-r-unimodal} proves it in full. The proof separates into three pieces. Theorem~\ref{thm:Uk-r-nlc} gives the symbolic non-log-concavity half. Theorem~\ref{thm:H-mode} proves that the main summand
\[
H_{k,r}(x)=G_k(x)F_{k+r}(x)
\]
is unimodal with a controlled mode. Proposition~\ref{prop:HE-bridge} then uses Darroch localization to pass from that mode statement to unimodality of the full KL-closure polynomial.

Two ideas drive the paper.
\begin{enumerate}[label=(\arabic*),leftmargin=1.7em]
    \item \textbf{Operations first.} We isolate the exact algebra of closure: the standard rooted-product identity in the notation needed here, Darroch mode localization together with an adjacent-mode bridge lemma for the basic real-rooted branch factors, an exact subtraction identity for one-edge insertion, a cycle-transfer recurrence, and a parity analysis of the reduced forest $T-\N[u]-\N[v]$.
    \item \textbf{Families first.} We organize the problem around the four tree reservoirs now known to feed non-log-concavity: the KL families, Galvin's late-break family, the Ramos--Sun finite corpus, and the Bautista--Ramos pattern-graph reservoirs, including the new consecutive-break families.
\end{enumerate}
The payoff is a program in which the main constructions become explicit coefficient calculations rather than an undefined search for examples.

A central structural split is the parity of $\dist_T(u,v)$. Closing an even-length $u$--$v$ path produces an odd cycle, while closing an odd-length path produces an even cycle. Because trees are bipartite, this is equivalent to asking whether the added edge joins the same or opposite parts of the tree bipartition. On the even-cycle side, the closure remains bipartite, so the Levit--Mandrescu upper-third monotonicity theorem gives rigorous tail control~\cite{LevitMandrescu2006}. This is only partial control, since bipartite graphs need not be unimodal in general~\cite{BhattacharyyaKahn2013}. On the odd-cycle side, and more generally on explicit KL closures, the natural tool is a Li-style direct coefficient comparison~\cite{Li2026}.

\medskip
\noindent\textbf{What the paper proves.}
The present manuscript combines the current structural kernel with a stronger family-level package. The proved results are:
\begin{enumerate}[label=(\roman*),leftmargin=1.6em]
    \item exact transport identities for one-edge closure and for weighted cycle closure;
    \item a parity dictionary and a path-with-branches description of the reduced forest controlling the correction term;
    \item an explicit infinite KL-derived unicyclic family $U_{k,r}$, $r\in\{0,1,2\}$, whose independence polynomials are unimodal and non-log-concave for all admissible parameters;
    \item a verified-range sharpening for $U_{k,r}$, proving a unique penultimate break together with exact mode formulas for all admissible $k\le 400$;
    \item a universal exhaustive theorem for substantial Galvin and Bautista--Ramos ranges;
    \item an exhaustive theorem showing that tested KL enlargements always remain unimodal, even though universal preservation of non-log-concavity fails there.
\end{enumerate}
Thus the paper does not merely propose a strategy. It now proves the flagship KL-closure theorem for all admissible parameters, sharpens it on a large exact range, and produces broad finite certified regions of unimodal but non-log-concave unicyclic graphs.

\section{General closure machinery}

\subsection{Rooted attachment products (background)}

Let $(H,r)$ be a rooted graph. Define
\[
A_H(x):=I(H-r;x),\qquad B_H(x):=x\,I(H-\N[r];x).
\]
If $G$ is any graph, let $G\odot(H,r)$ denote the graph obtained by taking one copy of $G$ and, for each vertex $v\in V(G)$, attaching a fresh copy of $H-r$ to $v$ along the root interface.

Before the genuinely new closure identities, we record the standard rooted-product formula in the notation used later. It goes back to Gutman's compound-graph identity and Rosenfeld's rooted-product formula, and also appears as a special case of Zhu's clique-cover product theorem~\cite{Gutman1992,Rosenfeld2010,Zhu2016}.

\begin{proposition}[known rooted-product formula]\label{prop:rooted-attachment}
If $G$ has order $n$, then
\[
I\bigl(G\odot(H,r);x\bigr)=A_H(x)^n\,I\!\left(G;\frac{B_H(x)}{A_H(x)}\right).
\]
\end{proposition}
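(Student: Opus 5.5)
The plan is to expand the independence polynomial of $G\odot(H,r)$ by classifying each independent set according to which vertices of the base graph $G$ it contains. In $G\odot(H,r)$ each vertex $v\in V(G)$ plays the role of the root of a private copy $H_v$ of $H$, with $H_v\cap G=\{v\}$. Distinct copies $H_v-v$ and $H_w-w$ share no vertices and have no edges between them, and the only edges leaving $H_v-v$ go to $v$. An independent set $S$ of $G\odot(H,r)$ is therefore determined by two pieces of data:
\begin{itemize}
\item the set $J=S\cap V(G)$, which must be independent in $G$;
\item for each $v$, the trace $S_v=S\cap (V(H_v)\setminus\{v\})$.
\end{itemize}
Conversely, any such choice is admissible subject only to the local constraint at each root.

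The key step is to record these local constraints. If $v\notin J$, then $S_v$ is an arbitrary independent set of $H_v-v\cong H-r$. If $v\in J$, then $S_v$ must also avoid the neighbours of $v$ in $H_v$, so $S_v$ is an arbitrary independent set of $H_v-\N[v]\cong H-\N[r]$. Since the pieces are independent of one another, the generating function factorizes. Weighting $v$ itself by $x$, the contribution of a fixed independent set $J$ is
\[
\prod_{v\in J} x\,I(H-\N[r];x)\prod_{v\notin J} I(H-r;x)=B_H(x)^{|J|}A_H(x)^{\,n-|J|}.
\]

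Summing over all independent $J\subseteq V(G)$ gives
\[
I\bigl(G\odot(H,r);x\bigr)=\sum_{J}B_H^{|J|}A_H^{\,n-|J|}=A_H(x)^n\sum_{j\ge0}s_j(G)\Bigl(\frac{B_H(x)}{A_H(x)}\Bigr)^{j},
\]
which is exactly $A_H(x)^n\,I\!\left(G;B_H(x)/A_H(x)\right)$.

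Division by $A_H$ is legitimate because $A_H$ has constant term $1$. The identity can be read either in $\mathbb{Q}(x)$ or as the polynomial identity $\sum_j s_j(G)B_H^jA_H^{n-j}$; the latter is a polynomial because $j\le\alpha(G)\le n$.

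The only step requiring care is the bijection: checking that independence in the glued graph is equivalent to independence of $J$ in $G$ together with the local root constraints. This holds because every edge of $G\odot(H,r)$ lies either in $G$ or in a single copy $H_v$. Everything else is a routine product expansion.
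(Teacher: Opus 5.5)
Your proof is correct and complete. The decomposition $S=J\cup\bigcup_v S_v$, the two local constraints at each root, and the resulting weight $B_H(x)^{|J|}A_H(x)^{n-|J|}$ are exactly what is needed. Your observation that every edge of $G\odot(H,r)$ lies either in $G$ or in a single copy $H_v$ is the right justification for the bijection.

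The paper does not prove this proposition. It records the identity as known, citing Gutman's compound-graph identity and Rosenfeld's rooted-product formula, and noting that it is a special case of Zhu's clique-cover product theorem. Your direct count is the standard argument behind those references. It is also the same weighting the paper itself uses in the identical-branch case of Proposition~\ref{prop:cycle-transfer}.

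The citation route places the identity inside Zhu's more general clique-cover framework. Your route makes the statement self-contained at essentially no cost.
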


This identity is used only as bookkeeping for repeated branch attachments. It is the substitution mechanism behind the branch-bundle families studied later.

\subsection{Exact transport for one-edge closures}

Let $T$ be a tree and let $u,v\in V(T)$ be distinct nonadjacent vertices. Set
\[
H=T+uv.
\]
Since $T$ contains a unique $u$--$v$ path, the graph $H$ is unicyclic.

\begin{proposition}[edge-addition identity]\label{prop:edge-addition}
For every tree $T$ and every nonadjacent pair $u,v\in V(T)$,
\[
I(H;x)=I(T;x)-x^2I\bigl(T-\N[u]-\N[v];x\bigr).
\]
\end{proposition}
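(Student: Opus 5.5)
The plan is to use the standard edge-deletion recurrence for independence polynomials and then identify the subtracted term explicitly. For any graph $H$ and any edge $e=uv$ of $H$, every independent set of $H-e$ is independent in $H$ unless it contains both $u$ and $v$. So I would first record the identity
\[
I(H-e;x)=I(H;x)+x^2\,I\bigl(H-\N_{H-e}[u]-\N_{H-e}[v];x\bigr),
\]
and then specialise to $H=T+uv$, where $H-e=T$.

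The key step is a bijection. Independent sets $S$ of $T$ with $u,v\in S$ correspond to sets $S'=S\setminus\{u,v\}$. Such an $S'$ must avoid $\N_T[u]\cup\N_T[v]$: it cannot contain $u$ or $v$ themselves, and it cannot contain any neighbour of either. Conversely, every independent set of $T-\N[u]-\N[v]$ extends to such an $S$ by adding $u$ and $v$. This uses the hypothesis that $u,v$ are nonadjacent in $T$, so that $\{u,v\}$ is itself independent in $T$. Since $\N_T[u]$ and $\N_T[v]$ are disjoint from $u$ and $v$ only in the right way, $|S|=|S'|+2$, and this yields the factor $x^2$.

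Finally, the independent sets of $H$ are exactly the independent sets of $T$ that do not contain both $u$ and $v$. Summing $x^{|S|}$ over this set gives
\[
I(H;x)=I(T;x)-x^2I\bigl(T-\N[u]-\N[v];x\bigr).
\]
Here the neighbourhoods are taken in $T$. I would note that these coincide with the neighbourhoods in $H$ minus the added edge, and that removing $\N_H[u]\cup\N_H[v]$ gives the same vertex set, since $v\in\N_T[v]$ already.

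There is no real obstacle. The only point needing care is bookkeeping the case where $\N_T[u]$ and $\N_T[v]$ overlap, for instance when $\dist_T(u,v)=2$. Because we delete the union of the two vertex sets, the overlap causes no double counting. The tree hypothesis is never used beyond nonadjacency, so the identity in fact holds for any graph and nonedge.
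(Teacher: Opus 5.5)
Your proof is correct and follows essentially the same route as the paper: the independent sets of $T+uv$ are those of $T$ minus the ones containing both $u$ and $v$, and the latter correspond bijectively (remove or add $u,v$) to the independent sets of $T-\N[u]-\N[v]$, with a shift of $2$ in size. Your sentence about the neighbourhoods being disjoint ``in the right way'' is unnecessary, since $|S|=|S'|+2$ simply because $u\neq v$ both lie in $S$; your remark that only nonadjacency and not the tree hypothesis is used is accurate, and the paper's argument likewise never uses it.
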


\begin{proof}
The independent sets of $H=T+uv$ are exactly the independent sets of $T$ except those containing both $u$ and $v$. Any independent set of $T$ containing both $u$ and $v$ must avoid every vertex of $\N(u)\cup\N(v)$, and after deleting $u$ and $v$ it becomes an independent set of $T-\N[u]-\N[v]$. Conversely, every independent set of $T-\N[u]-\N[v]$ extends uniquely to an independent set of $T$ containing both $u$ and $v$. Hence the deleted family contributes exactly the displayed correction term.
\end{proof}

\begin{corollary}[coefficient transport]\label{cor:coeff-transport}
Write
\[
I(T;x)=\sum_{k\ge 0} t_kx^k,\qquad
I\bigl(T-\N[u]-\N[v];x\bigr)=\sum_{k\ge 0} r_kx^k,
\]
where $r_k=0$ for $k<0$. If
\[
I(H;x)=\sum_{k\ge 0} h_kx^k,
\]
then for every $k\ge 0$ one has
\[
h_k=t_k-r_{k-2}.
\]
\end{corollary}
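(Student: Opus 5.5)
The corollary follows by comparing coefficients in the identity of Proposition~\ref{prop:edge-addition}, so the plan is to expand both sides as power series and match the coefficient of $x^k$ for each fixed $k\ge 0$. Write
\[
I(T;x)=\sum_{k\ge0}t_kx^k,\qquad I\bigl(T-\N[u]-\N[v];x\bigr)=\sum_{k\ge0}r_kx^k .
\]
Multiplying the second series by $x^2$ shifts indices and gives
\[
x^2\sum_{j\ge0}r_jx^j=\sum_{k\ge2}r_{k-2}x^k .
\]
With the convention $r_k=0$ for $k<0$, this equals $\sum_{k\ge0}r_{k-2}x^k$. The right-hand side of Proposition~\ref{prop:edge-addition} is therefore $\sum_{k\ge0}(t_k-r_{k-2})x^k$. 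Since the left-hand side is $\sum_{k\ge0}h_kx^k$, equality of polynomials yields $h_k=t_k-r_{k-2}$ for every $k\ge0$.

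A few edge cases need a remark. If $T-\N[u]-\N[v]$ is the empty graph, its independence polynomial is $1$, so $r_0=1$ and all other $r_j$ vanish; the formula still holds. For $k=0,1$ the formula says $h_k=t_k$, which is consistent: no set of size at most $1$ contains both $u$ and $v$.

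As a sanity check, one can also prove the statement directly by counting. The independent $k$-sets of $H$ are the independent $k$-sets of $T$ minus those containing both $u$ and $v$. The bijection in the proof of Proposition~\ref{prop:edge-addition} matches the latter with the independent $(k-2)$-sets of $T-\N[u]-\N[v]$.

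There is no real obstacle here. The only point needing care is the index shift together with the zero convention for negative indices, which is already built into the statement.
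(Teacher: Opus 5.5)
Your proposal is correct and matches the paper's proof, which is simply the coefficientwise reading of Proposition~\ref{prop:edge-addition}. As you say, the factor $x^2$ produces the shift to $r_{k-2}$, and the convention $r_j=0$ for $j<0$ handles $k=0,1$.
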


\begin{proof}
This is the coefficientwise form of Proposition~\ref{prop:edge-addition}.
\end{proof}

To track log-concavity locally, define
\[
\Delta_k(G):=s_k(G)^2-s_{k-1}(G)s_{k+1}(G).
\]
Then $I(G;x)$ fails log-concavity at $k$ exactly when $\Delta_k(G)<0$.

\begin{proposition}[exact defect transport]\label{prop:defect-transport}
With the notation above, for every $k\ge 0$ we have
\begin{align*}
\Delta_k(H)
&=\Delta_k(T)-2t_kr_{k-2}+t_{k-1}r_{k-1}+t_{k+1}r_{k-3}
  +r_{k-2}^2-r_{k-3}r_{k-1}.
\end{align*}
\end{proposition}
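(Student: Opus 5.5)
The identity is a direct substitution of Corollary~\ref{cor:coeff-transport} into the definition of $\Delta_k(H)$, followed by expansion and regrouping; no structural input beyond the coefficient transport is needed. Throughout, set $t_j=0$ for $j<0$ and $r_j=0$ for $j<0$, so that the relation $h_j=t_j-r_{j-2}$ holds for every integer $j$, including the boundary cases $k=0,1,2$. With this convention a single formula covers all $k$.

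The first step is to write
\[
\Delta_k(H)=h_k^2-h_{k-1}h_{k+1}
=(t_k-r_{k-2})^2-(t_{k-1}-r_{k-3})(t_{k+1}-r_{k-1}).
\]
Expanding the square gives
\[
t_k^2-2t_kr_{k-2}+r_{k-2}^2.
\]
Expanding the product gives
\[
t_{k-1}t_{k+1}-t_{k-1}r_{k-1}-t_{k+1}r_{k-3}+r_{k-3}r_{k-1}.
\]

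Subtracting the second expression from the first, the pure-$t$ terms $t_k^2-t_{k-1}t_{k+1}$ form $\Delta_k(T)$. The mixed terms are $-2t_kr_{k-2}+t_{k-1}r_{k-1}+t_{k+1}r_{k-3}$, and the pure-$r$ terms are $r_{k-2}^2-r_{k-3}r_{k-1}$. This is exactly the displayed formula.

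The only point requiring care is the index bookkeeping. The shift by $2$ places $r_{k-3},r_{k-2},r_{k-1}$ against $t_{k+1},t_k,t_{k-1}$, and one must check the signs of the two cross terms coming from the product. One should also confirm that the zero-extension convention makes the small-$k$ cases, and the case $k$ beyond the degree, consistent. I do not expect any genuine obstacle: it is a routine verification.
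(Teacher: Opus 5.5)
Your proposal is correct and matches the paper's proof: substitute $h_j=t_j-r_{j-2}$ into $\Delta_k(H)=h_k^2-h_{k-1}h_{k+1}$, expand, and recognize $t_k^2-t_{k-1}t_{k+1}=\Delta_k(T)$; your signs on the cross terms are right. Your explicit zero-extension convention for $t_j$ and $r_j$ at negative indices is a small extra that the paper leaves implicit, and it makes the small-$k$ cases cleanly covered.
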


\begin{proof}
Substituting $h_k=t_k-r_{k-2}$ into the definition of $\Delta_k(H)$ and expanding gives
\begin{align*}
\Delta_k(H)
&=(t_k-r_{k-2})^2-(t_{k-1}-r_{k-3})(t_{k+1}-r_{k-1})\\
&=t_k^2-t_{k-1}t_{k+1}-2t_kr_{k-2}+t_{k-1}r_{k-1}+t_{k+1}r_{k-3}
  +r_{k-2}^2-r_{k-3}r_{k-1},
\end{align*}
and the first two terms equal $\Delta_k(T)$.
\end{proof}

\begin{corollary}[support window]\label{cor:support-window}
For a fixed closure $H=T+uv$, the only indices $k$ at which the local defect can change are those for which at least one of $r_{k-3},r_{k-2},r_{k-1}$ is nonzero. In particular, if
\[
r_{k-3}=r_{k-2}=r_{k-1}=0,
\]
then $\Delta_k(H)=\Delta_k(T)$.
\end{corollary}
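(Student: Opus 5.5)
This follows directly from Proposition~\ref{prop:defect-transport}. The formula there expresses $\Delta_k(H)$ as $\Delta_k(T)$ plus a correction. Every term of the correction is a product in which at least one factor lies among $r_{k-3},r_{k-2},r_{k-1}$.

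Go through the correction terms one at a time under the hypothesis $r_{k-3}=r_{k-2}=r_{k-1}=0$:
\begin{itemize}
\item $-2t_kr_{k-2}$ vanishes because $r_{k-2}=0$;
\item $t_{k-1}r_{k-1}$ vanishes because $r_{k-1}=0$;
\item $t_{k+1}r_{k-3}$ vanishes because $r_{k-3}=0$;
\item $r_{k-2}^2$ and $r_{k-3}r_{k-1}$ both vanish for the same reasons.
\end{itemize}
Hence the identity reduces to $\Delta_k(H)=\Delta_k(T)$, which is the ``in particular'' assertion.

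The first sentence of the corollary is the contrapositive. If $\Delta_k(H)\ne\Delta_k(T)$, then not all of $r_{k-3},r_{k-2},r_{k-1}$ can vanish. So the only indices where the local defect can change are those with at least one of $r_{k-3},r_{k-2},r_{k-1}$ nonzero.

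For small $k$, the convention $r_j=0$ for $j<0$ from Corollary~\ref{cor:coeff-transport} applies inside the expansion. The same convention extends to $t_{-1}=0$, so the formula of Proposition~\ref{prop:defect-transport} holds for all $k\ge 0$. There is no real obstacle here: the only care needed is to confirm that every correction term carries an $r$-factor with index in $\{k-3,k-2,k-1\}$, which the explicit expansion shows.
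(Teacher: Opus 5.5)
Your proposal is correct and is the same argument as the paper's. The paper simply says the corollary is immediate from Proposition~\ref{prop:defect-transport}. Your term-by-term check, that every correction term contains a factor $r_{k-3}$, $r_{k-2}$, or $r_{k-1}$, is exactly the verification that remark leaves implicit.
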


\begin{proof}
This is immediate from Proposition~\ref{prop:defect-transport}.
\end{proof}

\begin{remark}[where the new algebra lives]\label{rem:new-algebra}
The reduced forest
\[
F_{u,v}:=T-\N[u]-\N[v]
\]
is the sole source of new algebra after edge insertion. In particular, any family theorem must identify how the coefficient sequence of $I(F_{u,v};x)$ interacts with that of $I(T;x)$. This separates two possible mechanisms: persistence of a pre-existing tree break, and creation of a genuinely new break by closure.
\end{remark}

\subsection{Cycle-transfer recurrence}

The subtraction identity is the cleanest universal formula for one-edge closure, but for exhaustive calculations it is often useful to work directly on the unique cycle together with its attached rooted branches.

\begin{proposition}[cycle-closure transfer formula]\label{prop:cycle-transfer}
Let $U=T+uv$ be a one-edge closure of a tree. Let
\[
C=(c_0,c_1,\dots,c_{\ell-1})
\]
be the unique cycle of $U$, where $c_0=u$ and $c_{\ell-1}=v$, listed cyclically. Deleting the cycle edges leaves rooted trees $B_i$ attached at $c_i$. For each $i$, define
\[
A_i(x)=I(B_i-c_i;x),\qquad B_i(x)=x\,I(B_i-\N[c_i];x).
\]
Set
\[
f_0^{(0)}=A_0,\quad g_0^{(0)}=0,\qquad f_0^{(1)}=0,\quad g_0^{(1)}=B_0,
\]
and for $1\le i\le \ell-1$ define
\[
f_i^{(\varepsilon)}=(f_{i-1}^{(\varepsilon)}+g_{i-1}^{(\varepsilon)})A_i,
\qquad g_i^{(\varepsilon)}=f_{i-1}^{(\varepsilon)}B_i,
\qquad \varepsilon\in\{0,1\}.
\]
Then
\[
I(U;x)=\bigl(f_{\ell-1}^{(0)}+g_{\ell-1}^{(0)}\bigr)+f_{\ell-1}^{(1)}.
\]
If all branch pairs are identical, that is $A_i=A$ and $B_i=B$ for every $i$, then
\[
I(U;x)=A(x)^{\ell} I\!\left(C_{\ell};\frac{B(x)}{A(x)}\right).
\]
\end{proposition}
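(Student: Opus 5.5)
The plan is a transfer-matrix (state) argument along the cycle. Cut the cycle edge $c_{\ell-1}c_0=uv$, so the cycle becomes the path $c_0c_1\cdots c_{\ell-1}$ with the rooted trees $B_i$ hanging off it. An independent set $S$ of $U$ decomposes uniquely into its trace on each branch $B_i$. The only interactions between different branches are the path edges $c_{i-1}c_i$ and the closing edge $c_{\ell-1}c_0$. For each $i$ the branch contributes $A_i$ if $c_i\notin S$: the rest of $B_i$ is then an arbitrary independent set of $B_i-c_i$. It contributes $B_i=x\,I(B_i-\N[c_i];x)$ if $c_i\in S$: we count $c_i$ itself and forbid its branch neighbours. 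So $I(U;x)$ is a sum over admissible $0/1$ labellings of the cycle vertices of products of $A_i$'s and $B_i$'s. A labelling is admissible when no two cyclically consecutive vertices are both labelled $1$.

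First, I will interpret the recursion as a dynamic program. I claim by induction on $i$ that $f_i^{(\varepsilon)}$ (respectively $g_i^{(\varepsilon)}$) is the generating function of labellings of $c_0,\dots,c_i$ such that:
\begin{itemize}
\item $c_0$ has label $\varepsilon$;
\item no two consecutive path vertices both have label $1$;
\item $c_i$ has label $0$ (respectively $1$).
\end{itemize}
Each labelling is weighted by its branch factors. The base case $i=0$ is exactly the given initialisation: the state $\varepsilon=0$ forces $c_0\notin S$, giving weight $A_0$, and $\varepsilon=1$ forces $c_0\in S$, giving $B_0$. For the inductive step, label $0$ at $c_i$ may follow either label at $c_{i-1}$, which gives $(f_{i-1}+g_{i-1})A_i$. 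Label $1$ may only follow label $0$, which gives $f_{i-1}B_i$. The parameter $\varepsilon$ is carried unchanged along the recursion.

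Next I impose the closing edge $c_{\ell-1}c_0$. If $\varepsilon=0$, any final label is allowed, giving $f^{(0)}_{\ell-1}+g^{(0)}_{\ell-1}$. If $\varepsilon=1$, the vertex $c_{\ell-1}=v$ must have label $0$, giving $f^{(1)}_{\ell-1}$. Summing the two cases gives the displayed formula. This step relies on $\ell\ge 3$, so that $c_0$ and $c_{\ell-1}$ are distinct vertices of the path. That holds here because $u,v$ are nonadjacent in $T$ and $U$ is a simple unicyclic graph.

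For the identical-branch case, I factor $A^{\ell}$ out of the labelling sum. A labelling with $j$ ones contributes $A^{\ell-j}B^{j}=A^{\ell}(B/A)^j$. Moreover, the admissible labellings are exactly the independent sets of the cycle $C_\ell$. Summing over all of them therefore gives $A^{\ell}I(C_\ell;B/A)$. Alternatively, this follows directly from Proposition~\ref{prop:rooted-attachment} with $G=C_\ell$.

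There is no real obstacle here. The only point needing care is the bijection between independent sets of $U$ and pairs consisting of an admissible cycle labelling and independent sets in the corresponding branch pieces. It must respect exactly which branch vertices are forbidden, namely $\N[c_i]\cap V(B_i)$ when $c_i\in S$. I will spell this bijection out once, and after that the rest is bookkeeping.
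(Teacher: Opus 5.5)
Your proposal is correct and takes essentially the same route as the paper. Both arguments run the weighted path recursion conditioned on whether $c_0$ is included, close the cycle by forcing $c_{\ell-1}$ to be excluded when $c_0$ is included, and factor out $A^{\ell}$ in the identical-branch case. One small caveat: your aside that the identical-branch formula follows from Proposition~\ref{prop:rooted-attachment} needs all branches to be copies of one rooted tree, which is stronger than the stated hypothesis $A_i=A$, $B_i=B$; your primary labelling-sum argument covers the general case anyway.
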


\begin{proof}
The recurrence counts weighted independent sets along the path $c_0,c_1,\dots,c_{\ell-1}$ under the condition that $c_0$ is excluded or included. If $c_0$ is excluded, then the last cycle vertex may be either excluded or included. If $c_0$ is included, then $c_{\ell-1}$ must be excluded because it is adjacent to $c_0$. Summing these two cases gives the first formula. In the identical-branch case, every independent set $S$ of the cycle contributes $B(x)^{|S|}A(x)^{\ell-|S|}$, and summing over all $S\in\Ind(C_{\ell})$ gives the factorization.
\end{proof}

\subsection{Parity bookkeeping for the unique cycle}

Because every tree is bipartite, the parity split for one-edge closures admits a clean structural reformulation.

\begin{proposition}[parity dictionary]\label{prop:parity-dictionary}
Let $T$ be a tree with bipartition $V(T)=A\cup B$, and let $u,v$ be distinct nonadjacent vertices. Put $H=T+uv$. Then the following are equivalent:
\begin{enumerate}[label=(\roman*),leftmargin=1.6em]
    \item $u$ and $v$ lie in the same part of the bipartition of $T$;
    \item $\dist_T(u,v)$ is even;
    \item the unique cycle of $H$ has odd length;
    \item $H$ is non-bipartite.
\end{enumerate}
Likewise, the following are equivalent:
\begin{enumerate}[label=(\roman*),leftmargin=1.6em]
    \item $u$ and $v$ lie in opposite bipartition classes of $T$;
    \item $\dist_T(u,v)$ is odd;
    \item the unique cycle of $H$ has even length;
    \item $H$ is bipartite.
\end{enumerate}
\end{proposition}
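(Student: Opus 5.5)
The plan is to prove the two chains of equivalences by working through the unique $u$--$v$ path $P$ in $T$, using the proper $2$-colouring of $T$ associated with $A\cup B$.

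\textbf{Step 1: (i)$\Leftrightarrow$(ii) in each list.} Walking along $P=(u=p_0,p_1,\dots,p_d=v)$ with $d=\dist_T(u,v)$, consecutive vertices lie in opposite parts because every edge of $T$ joins $A$ to $B$. By induction, $p_j$ lies in the same part as $u$ if and only if $j$ is even. Taking $j=d$ gives both equivalences (i)$\Leftrightarrow$(ii). This is legitimate because a tree's bipartition is unique up to swapping the two parts, since $T$ is connected.

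\textbf{Step 2: (ii)$\Leftrightarrow$(iii).} The unique cycle of $H$ is $P$ together with the new edge $uv$, so its length is $d+1$. Hence $d$ is even exactly when the cycle length is odd.

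\textbf{Step 3: (iii)$\Leftrightarrow$(iv).} If the cycle has odd length, then $H$ contains an odd cycle, so $H$ is non-bipartite. Conversely, if the cycle is even, then $d$ is odd, so $u$ and $v$ lie in opposite parts by Step 1. Every edge of $H$, whether an old tree edge or the new edge $uv$, then joins $A$ to $B$, so the same partition $A\cup B$ witnesses that $H$ is bipartite.

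The second list is the logical negation of the first, item by item, given that $u\ne v$ and $d\ge 2$. The only point needing care is the backward direction in Step 3, where the bipartition must be produced explicitly rather than inferred from the absence of odd cycles. Using $A\cup B$ directly handles this without appealing to König's characterization.
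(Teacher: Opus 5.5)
Your proposal is correct and follows essentially the same route as the paper. Both arguments use the same three ingredients: the parity of the unique $u$--$v$ path decides whether $u$ and $v$ share a part, the unique cycle has length $\dist_T(u,v)+1$, and bipartiteness of $H$ is decided by the parity of that cycle. The only difference is in the even-cycle case: you exhibit $A\cup B$ directly as a bipartition of $H$, whereas the paper cites the general fact that a graph is bipartite if and only if it has no odd cycle. Since $H$ is unicyclic, this amounts to the same argument.
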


\begin{proof}
In a bipartite graph, two vertices lie in the same part exactly when every path joining them has even length, and in opposite parts exactly when every such path has odd length. Since $T$ is a tree, there is a unique $u$--$v$ path, so the parity of $\dist_T(u,v)$ is determined by the bipartition. Adding the edge $uv$ closes this path into the unique cycle of $H$, whose length is $\dist_T(u,v)+1$. Finally, a graph is bipartite if and only if it contains no odd cycle.
\end{proof}

\begin{table}[t]
\centering
\renewcommand{\arraystretch}{1.18}
\begin{tabular}{>{\raggedright\arraybackslash}p{0.28\textwidth} >{\raggedright\arraybackslash}p{0.22\textwidth} >{\raggedright\arraybackslash}p{0.20\textwidth} >{\raggedright\arraybackslash}p{0.17\textwidth}}
\toprule
added edge joins & parity of $\dist_T(u,v)$ & unique cycle in $T+uv$ & closure type\\
\midrule
same bipartition class & even & odd & non-bipartite\\
opposite bipartition classes & odd & even & bipartite\\
\bottomrule
\end{tabular}
\caption{The parity split can be read directly from the tree bipartition.}
\label{tab:parity-split}
\end{table}

\begin{figure}[t]
\centering
\begin{tikzpicture}[scale=0.95, every node/.style={font=\small}]
    \begin{scope}
        \node at (2.5,2.4) {\textbf{same part}};
        \foreach \x/\shade/\lab in {
            0/blue!18/$u$,
            1/white/{},
            2/blue!18/{},
            3/white/{},
            4/blue!18/$v$}
            \node[circle,draw,fill=\shade,minimum size=9mm,inner sep=0pt] (L\x) at (\x*1.25,0) {\lab};
        \foreach \a/\b in {0/1,1/2,2/3,3/4}
            \draw[line width=0.7pt] (L\a)--(L\b);
        \draw[red!75!black,line width=1pt] (L0) to[out=55,in=125] node[midway,above] {$uv$} (L4);
        \node at (2.5,-1.0) {\textbf{same part $\Rightarrow$ odd cycle}};
    \end{scope}

    \begin{scope}[xshift=8.3cm]
        \node at (1.9,2.4) {\textbf{opposite parts}};
        \foreach \x/\shade/\lab in {
            0/blue!18/$u$,
            1/white/{},
            2/blue!18/{},
            3/white/$v$}
            \node[circle,draw,fill=\shade,minimum size=9mm,inner sep=0pt] (R\x) at (\x*1.25,0) {\lab};
        \foreach \a/\b in {0/1,1/2,2/3}
            \draw[line width=0.7pt] (R\a)--(R\b);
        \draw[red!75!black,line width=1pt] (R0) to[out=55,in=125] node[midway,above] {$uv$} (R3);
        \node at (1.9,-1.0) {\textbf{opposite parts $\Rightarrow$ even cycle}};
    \end{scope}
\end{tikzpicture}
\caption{The parity split for one-edge closures of trees. Vertex shading indicates the bipartition of the underlying tree.}
\label{fig:parity-cartoon}
\end{figure}
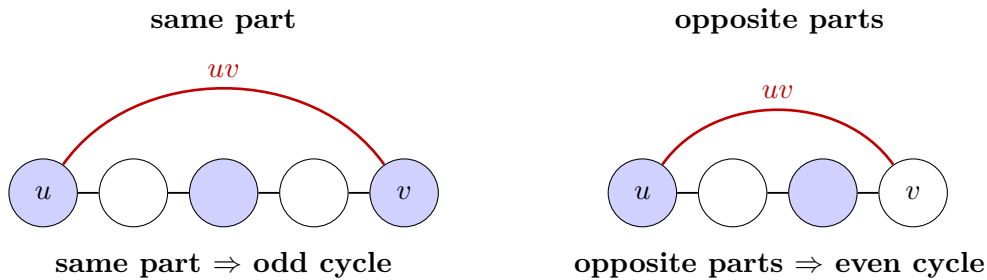

\begin{remark}[why parity matters]\label{rem:parity-matters}
The parity split is simultaneously a split by distance parity in the tree, by parity of the unique cycle after closure, and by whether the closure remains bipartite. Any family calculation should therefore distinguish odd-cycle and even-cycle closures from the outset.
\end{remark}

\subsection{The reduced forest and branch data}

Let
\[
P(u,v)=x_0x_1\cdots x_d
\]
be the unique $u$--$v$ path in $T$, where $x_0=u$ and $x_d=v$. The reduced forest
\[
F_{u,v}:=T-\N[u]-\N[v]
\]
controls both the correction term in Proposition~\ref{prop:edge-addition} and the defect update in Proposition~\ref{prop:defect-transport}.

\begin{lemma}[surviving path core]\label{lem:surviving-path-core}
Assume $d\ge 4$. Then the vertices of $P(u,v)$ that survive in $F_{u,v}$ are exactly
\[
x_2,x_3,\dots,x_{d-2},
\]
and they induce a path on $d-3$ vertices. Consequently,
\begin{itemize}[leftmargin=1.6em]
    \item if $d$ is even, then the surviving path core has odd order and hence a unique central vertex;
    \item if $d$ is odd, then the surviving path core has even order and hence a central edge.
\end{itemize}
\end{lemma}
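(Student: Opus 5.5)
The plan is to identify exactly which path vertices are removed when we delete $\N[u]\cup\N[v]$, and then read off the structure of what remains. Since $P(u,v)=x_0x_1\cdots x_d$ is the unique $u$--$v$ path in the tree $T$, we have $\dist_T(x_i,x_j)=|i-j|$ for all $i,j$. The key observation is that a vertex $x_i$ of the path lies in $\N[u]$ iff $\dist_T(u,x_i)=i\le 1$. Likewise, $x_i$ lies in $\N[v]$ iff $\dist_T(v,x_i)=d-i\le 1$. So the path vertices deleted in forming $F_{u,v}$ are precisely $x_0,x_1,x_{d-1},x_d$.

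We use the hypothesis $d\ge 4$ in two ways. First, it makes these four vertices distinct: $1<d-1$ requires $d\ge 3$. Second, it makes $2\le d-2$, so the surviving set $\{x_2,\dots,x_{d-2}\}$ is nonempty and has $d-3$ elements.

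Next we check that these survivors induce a path in $F_{u,v}$.
\begin{itemize}
\item Consecutive vertices $x_i,x_{i+1}$ are adjacent in $T$, and $F_{u,v}$ is an induced subgraph of $T$, so these edges survive.
\item Nonconsecutive $x_i,x_j$ with $|i-j|\ge 2$ are nonadjacent in $T$, since $\dist_T(x_i,x_j)=|i-j|\ge 2$ in a tree.
\end{itemize}
Hence the induced subgraph on the survivors is the path $x_2x_3\cdots x_{d-2}$ on $d-3$ vertices.

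The parity consequences then follow directly. The number $d-3$ is odd exactly when $d$ is even. A path with an odd number $2m+1$ of vertices has a unique central vertex, namely $x_{2+m}=x_{d/2}$. A path with an even number of vertices has a central edge, namely $x_{(d-1)/2}x_{(d+1)/2}$.

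No step is a real obstacle. The only point needing care is the claim that no other path vertex lies in $\N[u]\cup\N[v]$. A path vertex $x_i$ with $2\le i\le d-2$ cannot be a neighbour of $u$ reached through an off-path route, because adjacency would force distance $1$, whereas $\dist_T(u,x_i)=i$. This uses uniqueness of paths in trees, so shortest distances along $P(u,v)$ coincide with tree distances.
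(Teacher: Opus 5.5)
Your proposal is correct and follows the same route as the paper: you show that exactly $x_0,x_1,x_{d-1},x_d$ lie in $\N[u]\cup\N[v]$, and the survivors then form the path $x_2x_3\cdots x_{d-2}$. Your use of $\dist_T(x_i,x_j)=|i-j|$ spells out two points the paper states without proof: no other path vertex is deleted, and the surviving vertices induce no chords.
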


\begin{proof}
Deleting $\N[u]$ removes $x_0=u$ and $x_1$, while deleting $\N[v]$ removes $x_d=v$ and $x_{d-1}$. No other path vertex belongs to $\N[u]\cup\N[v]$, so the surviving path vertices are precisely $x_2,\dots,x_{d-2}$. The induced subgraph on these vertices is the path inherited from $P(u,v)$.
\end{proof}

\begin{lemma}[branch decomposition of the reduced forest]\label{lem:branch-decomposition}
Every component of
\[
F_{u,v}-\{x_2,\dots,x_{d-2}\}
\]
meets the surviving path core in at most one vertex. Equivalently, $F_{u,v}$ is a path-with-branches forest obtained from the core
\[
x_2x_3\cdots x_{d-2}
\]
by attaching rooted trees to some of its vertices.
\end{lemma}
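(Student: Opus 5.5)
The plan is to use the tree structure of $T$ directly. Put $S=\{x_2,\dots,x_{d-2}\}$; by Lemma~\ref{lem:surviving-path-core} this is the vertex set of the surviving path core of $F_{u,v}$. Let $K$ be a component of $F_{u,v}-S$. Since $T$ is connected, so is $T-\N[u]-\N[v]$ restricted appropriately, and I want to show that $K$ is joined by edges of $T$ to at most one vertex of $S$. Equivalently, inside $F_{u,v}$ at most one vertex of $S$ has a neighbour in $K$. The argument is by contradiction via the uniqueness of paths in a tree.

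Suppose $K$ contains vertices $y,y'$ (possibly equal) with $y$ adjacent to $x_i$ and $y'$ adjacent to $x_j$, where $i\neq j$ and $2\le i<j\le d-2$. Since $K$ is connected, it contains a $y$--$y'$ path $Q$. Then
\[
x_i\,y\,Q\,y'\,x_j
\]
is a walk from $x_i$ to $x_j$ whose internal vertices all lie in $K$. In particular it avoids $S$ and every vertex of $P(u,v)$. To see this, note that the vertices of $P(u,v)$ outside $S$ are $x_0,x_1,x_{d-1},x_d$, and these lie in $\N[u]\cup\N[v]$, so they are not in $F_{u,v}$ at all. Because its internal vertices are distinct, this walk is a path. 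The subpath $x_ix_{i+1}\cdots x_j$ of $P(u,v)$ is a second $x_i$--$x_j$ path in $T$, and it is internally disjoint from the first. Their union contains a cycle, which contradicts the fact that $T$ is a tree. Hence every component of $F_{u,v}-S$ meets the core in at most one vertex, meaning it is adjacent to at most one core vertex.

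For the ``equivalently'' reformulation, attach each component $K$ that has a (necessarily unique) neighbouring core vertex $x_i$ to $x_i$. The component $K$ is a subtree of $T$. There is exactly one edge between $K$ and $x_i$, since two such edges would again create a cycle, closed through $K$. So $K$ together with $x_i$ forms a rooted tree hanging at $x_i$. The components with no core neighbour are separate tree components of the forest. This gives exactly the path-with-branches description.

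The only delicate point is the second step: one has to check that the connecting route avoids the core path entirely. The reason it does is that the deleted neighbourhoods remove precisely the path vertices outside $S$, which is the content of Lemma~\ref{lem:surviving-path-core}.
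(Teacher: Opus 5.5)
Your proof is correct and takes the same approach as the paper: an off-core component touching two distinct core vertices would, together with the core subpath between them, close a cycle in $T$. You make this explicit, including the check that the connecting route avoids $x_0,x_1,x_{d-1},x_d$, and you also handle the single attaching edge and the components with no core neighbour; separately, your aside that $T-\N[u]-\N[v]$ is connected ``restricted appropriately'' is false in general (branches hanging off $x_1$ become separate components), but it is never used, so just delete it.
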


\begin{proof}
Every vertex of $T$ outside $P(u,v)$ has a unique nearest vertex on that path. After deleting $\N[u]\cup\N[v]$, any surviving component away from the core can still meet the remaining path in at most one vertex; otherwise two distinct contacts would create a cycle in the original tree. Hence each off-core component is attached to a unique surviving path vertex.
\end{proof}

\begin{remark}[where family dependence begins]\label{rem:local-degree-data}
Parity alone determines only whether the surviving core has a central vertex or a central edge. The coefficients of $I(F_{u,v};x)$, and hence the sign and size of the correction in Proposition~\ref{prop:defect-transport}, depend on the rooted branch data attached to that core. This is why the parity split is rigorous while any directional bias must remain family-dependent until branch polynomials are computed.
\end{remark}

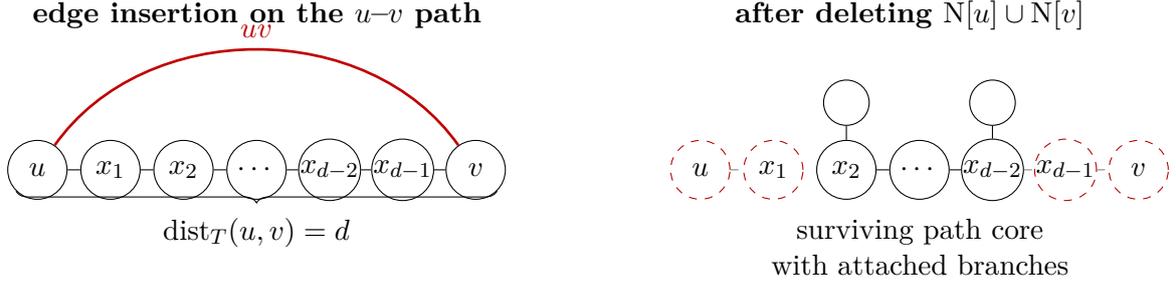
\begin{figure}[t]
\centering
\begin{tikzpicture}[scale=0.89, every node/.style={font=\small}]
    \begin{scope}
        \node at (3.25,2.3) {\textbf{edge insertion on the $u$--$v$ path}};
        \foreach \x/\lab in {0/$u$,1/$x_1$,2/$x_2$,3/$\cdots$,4/$x_{d-2}$,5/$x_{d-1}$,6/$v$}
            \node[circle,draw,minimum size=7.8mm,inner sep=0pt] (A\x) at (\x*1.08,0) {\lab};
        \foreach \a/\b in {0/1,1/2,2/3,3/4,4/5,5/6}
            \draw (A\a)--(A\b);
        \draw[red!75!black,line width=1pt] (A0) to[out=55,in=125] node[midway,above] {$uv$} (A6);
        \draw[decorate,decoration={brace,mirror,amplitude=4.5pt}] (A0.south west) -- (A6.south east)
            node[midway,below=6pt] {$\dist_T(u,v)=d$};
    \end{scope}

    \begin{scope}[xshift=9.8cm]
        \node at (3.1,2.3) {\textbf{after deleting $\N[u]\cup\N[v]$}};
        \node[circle,draw=red!70!black,dashed,minimum size=7.8mm,inner sep=0pt] (B0) at (0,0) {$u$};
        \node[circle,draw=red!70!black,dashed,minimum size=7.8mm,inner sep=0pt] (B1) at (1.08,0) {$x_1$};
        \node[circle,draw,minimum size=7.8mm,inner sep=0pt] (B2) at (2.16,0) {$x_2$};
        \node[circle,draw,minimum size=7.8mm,inner sep=0pt] (B3) at (3.24,0) {$\cdots$};
        \node[circle,draw,minimum size=7.8mm,inner sep=0pt] (B4) at (4.32,0) {$x_{d-2}$};
        \node[circle,draw=red!70!black,dashed,minimum size=7.8mm,inner sep=0pt] (B5) at (5.40,0) {$x_{d-1}$};
        \node[circle,draw=red!70!black,dashed,minimum size=7.8mm,inner sep=0pt] (B6) at (6.48,0) {$v$};
        \draw[gray,dashed] (B0)--(B1);
        \draw (B2)--(B3)--(B4);
        \draw[gray,dashed] (B4)--(B5)--(B6);
        \node[circle,draw,minimum size=6mm,inner sep=0pt] (C1) at (2.16,1.0) {};
        \node[circle,draw,minimum size=6mm,inner sep=0pt] (C2) at (4.32,1.0) {};
        \draw (B2)--(C1);
        \draw (B4)--(C2);
        \node[align=center] at (3.25,-1.15) {surviving path core\\with attached branches};
    \end{scope}
\end{tikzpicture}
\caption{The reduced forest $F_{u,v}$: surviving core $x_2,\dots,x_{d-2}$ with rooted branches.}
\label{fig:reduced-forest}
\end{figure}

\begin{example}[closing a path into a cycle]\label{ex:path-to-cycle}
Take $T=P_n$ and let $u,v$ be its endpoints. Then $H=T+uv=C_n$, while
\[
F_{u,v}=P_{n-4}\qquad (n\ge 4).
\]
Hence Proposition~\ref{prop:edge-addition} becomes
\[
I(C_n;x)=I(P_n;x)-x^2I(P_{n-4};x).
\]
This is the barest model for more elaborate KL-type closures: the entire correction is carried by the surviving path core, with no side branches at all.
\end{example}

\subsection{Coefficient tools, Darroch localization, bridge lemmas, and Li-style reduction}

\begin{lemma}[same-mode sums preserve unimodality]\label{lem:same-mode}
Let $P_1(x),\dots,P_r(x)$ be polynomials with nonnegative coefficients, each unimodal with the same mode index $m$. If $\lambda_1,\dots,\lambda_r\ge 0$, then $\sum_{i=1}^r \lambda_iP_i(x)$ is unimodal with mode $m$.
\end{lemma}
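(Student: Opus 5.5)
The statement is elementary, so I will argue directly at the level of coefficients. Write $P_i(x)=\sum_{j\ge 0}p_{i,j}x^j$ and set $S(x)=\sum_{i=1}^r\lambda_iP_i(x)=\sum_j s_jx^j$, where $s_j=\sum_i\lambda_i p_{i,j}$. I take unimodal with mode index $m$ to mean
\[
p_{i,0}\le p_{i,1}\le\cdots\le p_{i,m}\quad\text{and}\quad p_{i,m}\ge p_{i,m+1}\ge\cdots
\]
for each $i$. This is the formulation compatible with ties, and the one used later when the lemma is applied.

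The first step handles the left side. For $j<m$ we have $p_{i,j}\le p_{i,j+1}$ for every $i$. Multiplying by $\lambda_i\ge 0$ and summing over $i$ gives $s_j\le s_{j+1}$. The second step handles the right side in the same way: for $j\ge m$ we have $p_{i,j}\ge p_{i,j+1}$ for all $i$, so $s_j\ge s_{j+1}$. Together these show that $(s_j)$ is nondecreasing up to $m$ and nonincreasing from $m$ on. Hence $S$ is unimodal and $m$ is a mode index, meaning $s_m=\max_j s_j$.

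The only subtlety, and the main point needing care, is how the mode is defined. The paper may intend the mode to be the smallest index attaining the maximum, or require strict inequalities at the peak. In that case the conclusion still holds provided the $P_i$ with $\lambda_i>0$ share that convention, but the argument needs one more check. Suppose some $P_i$ with $\lambda_i>0$ has $p_{i,m-1}<p_{i,m}$. Then the strict inequality survives the nonnegative combination, giving $s_{m-1}<s_m$, so $m$ is also the first maximizing index of $S$.

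Two degenerate cases should be stated explicitly. If all $\lambda_i=0$, then $S=0$ is trivially unimodal. Nonnegativity of the coefficients of $S$ is immediate from $\lambda_i\ge 0$ and $p_{i,j}\ge 0$. I expect no real obstacle; the whole content is that a nonnegative combination of sequences preserves the inequalities they share.
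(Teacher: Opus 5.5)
Your proposal is correct and is essentially the paper's own proof. Both argue coefficientwise: for $j<m$ every sequence is weakly increasing and for $j\ge m$ every sequence is weakly decreasing, and nonnegative linear combinations preserve both monotonicities. Your extra remarks on the strict/first-maximizer convention and the all-$\lambda_i=0$ case are harmless refinements the paper does not make.
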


\begin{proof}
Before the common mode $m$, every coefficient sequence is weakly increasing; after $m$, every coefficient sequence is weakly decreasing. Positive linear combinations preserve both monotonicities.
\end{proof}

\begin{proposition}[Ibragimov's strong-unimodality principle]\label{prop:ibragimov}
Let
\[
P(x)=\sum_{i=0}^d p_ix^i,\qquad Q(x)=\sum_{j=0}^e q_jx^j
\]
have nonnegative coefficients. If $(p_0,\dots,p_d)$ is log-concave and has no internal zeros, and if $(q_0,\dots,q_e)$ is unimodal, then the coefficient sequence of $P(x)Q(x)$ is unimodal.
\end{proposition}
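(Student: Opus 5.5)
The plan is to follow Ibragimov's classical route: first prove the result for the extremal log-concave sequences, then exploit that unimodality of $Q$ means $Q$ is a nonnegative combination of "indicator-like" building blocks. Normalize so that the sequences are indexed over $\mathbb{Z}$ with zeros outside their supports. Since $(p_i)$ is log-concave with no internal zeros, the ratios $p_{i+1}/p_i$ are weakly decreasing on the support. Write the unimodal sequence $q$, with mode $m$, as a nonnegative combination of interval indicators,
\[
q=\sum_{a\le m\le b}\lambda_{a,b}\,\mathbf 1_{[a,b]},\qquad \lambda_{a,b}\ge 0 .
\]
Such a decomposition is obtained by the usual "layer-cake" slicing at the distinct levels of $q$. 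Every slice is an interval containing $m$, because the superlevel sets of a unimodal sequence are intervals containing the mode.

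The first key step is to show that $p*\mathbf 1_{[a,b]}$ is log-concave with no internal zeros. Its coefficients are the window sums $c_n=\sum_{i=n-b}^{n-a}p_i$. The cleanest approach is to note that $\mathbf 1_{[a,b]}$ is itself log-concave without internal zeros. The convolution of two such sequences is again log-concave, which is the standard Hoggar-type fact. So each $p*\mathbf 1_{[a,b]}$ is log-concave, and in particular unimodal.

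The second key step is the real obstacle. A nonnegative sum of unimodal sequences need not be unimodal unless the modes align, as in Lemma~\ref{lem:same-mode}, and here the modes of the pieces $p*\mathbf 1_{[a,b]}$ differ. I would therefore argue directly on the product $c=p*q$ via sign changes of the difference sequence. We have $c_{n+1}-c_n=\sum_i p_i(q_{n+1-i}-q_{n-i})$, which is the convolution of $p$ with the difference sequence $\delta_j=q_{j+1}-q_j$. Unimodality of $q$ means $\delta$ has at most one sign change, from $+$ to $-$.

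Log-concavity of $p$ with no internal zeros is exactly the statement that $p$ is a Pólya frequency sequence of order $2$. By the variation-diminishing property of $\mathrm{PF}_2$ kernels (Schoenberg–Karlin), the kernel $K(n,j)=p_{n-j}$ is $\mathrm{TP}_2$. Convolution with it therefore does not increase the number of sign changes and preserves their direction. Hence $(c_{n+1}-c_n)_n$ has at most one sign change, from $+$ to $-$, so $c$ is unimodal.

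I expect the main care to lie in justifying the $\mathrm{TP}_2$ variation-diminishing step for one sign change. The argument is elementary. Suppose $\delta_j\ge0$ for $j<m$ and $\delta_j\le0$ for $j\ge m$. Suppose also that $(p*\delta)_n\le 0$ at some $n$ with $p_{n-m}>0$. Then for $n'>n$, the $\mathrm{TP}_2$ inequality $p_{n'-j}p_{n-m}\le p_{n-j}p_{n'-m}$ for $j<m$, and the reverse for $j\ge m$, gives
\[
p_{n-m}\,(p*\delta)_{n'}\le p_{n'-m}\,(p*\delta)_n\le 0 .
\]
The boundary cases, where $p_{n-m}=0$ because we are outside the support, must be handled using the no-internal-zeros hypothesis. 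That hypothesis ensures the support is an interval, so those edge cases reduce to trivial monotonicity. With this in hand, the first step (the interval decomposition) becomes unnecessary and serves only as intuition.
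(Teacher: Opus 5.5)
Your argument is correct, but it takes a different route from the paper, which does not actually prove the statement. The paper cites Ibragimov's strong-unimodality theorem and Brenti's survey, and leaves the passage to finitely supported integer sequences to the literature. Ibragimov's theorem is originally about distributions on the real line; the lattice form is usually credited to Keilson and Gerber.

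You instead prove exactly the discrete statement the paper needs. You write $c_{n+1}-c_n=\sum_j p_{n-j}\delta_j$ and use the single $\mathrm{TP}_2$ inequality $p_{n-j}p_{n'-m}\ge p_{n-m}p_{n'-j}$ for $j<m$ (reversed for $j>m$, equality at $j=m$). This gives $p_{n-m}(c_{n'+1}-c_{n'})\le p_{n'-m}(c_{n+1}-c_n)$ for $n<n'$, which is the one-sign-change case of variation diminution with no general theory required. It also shows exactly where the no-internal-zeros hypothesis enters: the ``inner pair dominates outer pair'' inequality fails for $(1,0,0,1)$, which nonetheless satisfies $a_i^2\ge a_{i-1}a_{i+1}$.

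You are right that the layer-cake/Hoggar step cannot close the argument alone. The pieces $p*\mathbf{1}_{[a,b]}$ have different modes, so Lemma~\ref{lem:same-mode} does not apply.

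The one place to tighten is the boundary. Work with the first index $n$ at which $c_{n+1}-c_n<0$ strictly, since zero differences to the left of the support do not propagate. Then split into three cases:
\begin{itemize}
\item If $n-m$ lies below the support of $p$, every contributing $j$ satisfies $j<m$, so $c_{n+1}-c_n\ge 0$, which is impossible.
\item If $n-m$ lies above the support, then for every $n'\ge n$ every contributing $j$ satisfies $j>m$, so all later differences are $\le 0$.
\item Otherwise $p_{n-m}>0$, and your inequality applies directly.
\end{itemize}

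The paper's citation buys brevity. Your route buys a self-contained short proof of precisely the finite lattice version that is later invoked for $H_{k,r}=G_kF_{k+r}$.
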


\begin{proof}
This is the finite-support form of Ibragimov's strong-unimodality theorem for convolutions of log-concave distributions~\cite{Ibragimov1956}. See also Brenti~\cite[Section~4]{Brenti1989}. We use it below with $P(x)=G_k(x)$ and $Q(x)=F_{k+r}(x)$.
\end{proof}

\begin{lemma}[adjacent-mode bridge]\label{lem:adjacent-mode-bridge}
Let
\[
P(x)=\sum_{j\ge 0} p_jx^j,\qquad Q(x)=\sum_{j\ge 0} q_jx^j
\]
have nonnegative coefficients. Assume that $P$ is unimodal with a mode at $m$, and that $Q$ is unimodal with a mode in $\{m,m+1\}$. Then $P(x)+Q(x)$ is unimodal, with a mode in $\{m,m+1\}$. More precisely, the coefficient sequence of $P+Q$ is weakly increasing through index $m$ and weakly decreasing from index $m+1$ onward, so the only unresolved comparison is between the two bridge coefficients at $m$ and $m+1$.
\end{lemma}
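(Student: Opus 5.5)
The plan is to compare consecutive coefficients of $S(x)=P(x)+Q(x)=\sum_j s_jx^j$, where $s_j=p_j+q_j$, directly, exactly as in the proof of Lemma~\ref{lem:same-mode}. The only difference from that lemma is that the two summands no longer share a mode. So I will split the index range into three regions: indices up to $m$, indices from $m+1$ on, and the single bridge step between $m$ and $m+1$. Let $m'\in\{m,m+1\}$ denote a mode of $Q$. Unimodality with mode $m$ means $p_j\le p_{j+1}$ for $j<m$ and $p_j\ge p_{j+1}$ for $j\ge m$. Likewise $q_j\le q_{j+1}$ for $j<m'$ and $q_j\ge q_{j+1}$ for $j\ge m'$.

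\emph{Rising side.} Take $0\le j<m$. Then $j<m$ gives $p_j\le p_{j+1}$. Since $j<m\le m'$, also $q_j\le q_{j+1}$. Adding, $s_j\le s_{j+1}$, so $(s_j)$ is weakly increasing through index $m$.

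\emph{Falling side.} Take $j\ge m+1$. Then $j\ge m$ gives $p_j\ge p_{j+1}$. Since $j\ge m+1\ge m'$, also $q_j\ge q_{j+1}$. Adding, $s_j\ge s_{j+1}$, so $(s_j)$ is weakly decreasing from index $m+1$ onward. Nonnegativity of the coefficients, together with finite support, keeps this valid out to the degree, where the coefficients are eventually $0$.

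\emph{Bridge step.} The pair $(s_m,s_{m+1})$ is the only comparison not yet controlled. If $s_m\ge s_{m+1}$, the sequence rises through $m$ and falls from $m$, so $m$ is a mode. Otherwise it rises through $m+1$ and falls from $m+1$, so $m+1$ is a mode. In either case $S$ is unimodal with a mode in $\{m,m+1\}$, which also yields the "more precisely" clause.

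There is no real obstacle here. The one point to check carefully is the index bookkeeping at the boundary: the rising-side argument needs $m\le m'$, and the falling-side argument needs $m'\le m+1$. This is exactly why the hypothesis restricts $Q$'s mode to the adjacent window $\{m,m+1\}$. A window of width greater than one would leave an unresolved interval of indices, and unimodality could then genuinely fail.
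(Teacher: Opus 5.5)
Your proposal is correct and follows essentially the paper's proof. The paper also checks monotonicity of $p_j+q_j$ for $j<m$ (using that $Q$'s mode is at least $m$) and for $j\ge m+1$ (using that it is at most $m+1$). Your explicit case split on $s_m$ versus $s_{m+1}$ just spells out the concluding step that the paper leaves implicit.
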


\begin{proof}
For every $j<m$, the unimodality hypotheses give
\[
p_j\le p_{j+1},\qquad q_j\le q_{j+1},
\]
because a mode of $Q$ occurs no earlier than $m$. Hence
\[
p_j+q_j\le p_{j+1}+q_{j+1}\qquad (j<m),
\]
so the coefficient sequence of $P+Q$ is weakly increasing up to index $m$. Likewise, for every $j\ge m+1$ one has
\[
p_j\ge p_{j+1},\qquad q_j\ge q_{j+1},
\]
because $P$ is already nonincreasing after $m$ and $Q$ is nonincreasing after its mode, which lies at $m$ or $m+1$. Therefore
\[
p_j+q_j\ge p_{j+1}+q_{j+1}\qquad (j\ge m+1),
\]
so the coefficient sequence is weakly decreasing from index $m+1$ onward. This proves the claim.
\end{proof}

\begin{lemma}[a tail criterion for multiplication]\label{lem:tail-product}
Let
\[
P(x)=\sum_{i=0}^d p_ix^i,\qquad Q(x)=\sum_{j=0}^e q_jx^j,\qquad R(x)=P(x)Q(x),
\]
with $p_dq_e>0$. Writing $r_k=[x^k]R(x)$, one has
\[
r_{d+e}=p_dq_e,\qquad r_{d+e-1}=p_{d-1}q_e+p_dq_{e-1},
\]
and
\[
r_{d+e-2}=p_{d-2}q_e+p_{d-1}q_{e-1}+p_dq_{e-2}.
\]
In particular, if
\[
\bigl(p_{d-1}q_e+p_dq_{e-1}\bigr)^2< p_dq_e\bigl(p_{d-2}q_e+p_{d-1}q_{e-1}+p_dq_{e-2}\bigr),
\]
then $R$ is non-log-concave.
\end{lemma}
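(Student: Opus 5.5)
The formulas for $r_{d+e}$, $r_{d+e-1}$ and $r_{d+e-2}$ come from reading off the top of the Cauchy product $r_k=\sum_{i+j=k}p_iq_j$, with the convention $p_i=0$ for $i\notin\{0,\dots,d\}$ and $q_j=0$ for $j\notin\{0,\dots,e\}$.

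For $k=d+e$, a pair $(i,j)$ with $i+j=d+e$, $i\le d$, $j\le e$ must be $(d,e)$. For $k=d+e-1$, the admissible pairs are $(d-1,e)$ and $(d,e-1)$. For $k=d+e-2$, they are $(d-2,e)$, $(d-1,e-1)$ and $(d,e-2)$. Here I note that if $d<2$ or $e<2$, the vanishing coefficients simply drop out, so the formulas hold verbatim under the zero convention.

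Since $p_dq_e>0$, the product $R$ has degree exactly $d+e$. The penultimate log-concavity inequality for $R$ is $r_{d+e-1}^2\ge r_{d+e-2}\,r_{d+e}$, i.e.\ $\Delta_{d+e-1}\ge 0$ in the notation of Proposition~\ref{prop:defect-transport}. This requires $1\le d+e-1\le d+e-1$, i.e.\ $d+e\ge 2$.

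Substituting the three formulas, the displayed hypothesis states exactly that $r_{d+e-1}^2<r_{d+e}\,r_{d+e-2}$. So the penultimate inequality fails and $R$ is not log-concave.

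One degenerate point needs care: if $d+e\le 1$, the index $d+e-1$ is not interior. In that case $r_{d+e-2}=0$, so the hypothesis would read $r_{d+e-1}^2<0$, which is impossible. The hypothesis therefore forces $d+e\ge 2$, and the index is genuinely interior.

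The argument is just this bookkeeping and has no real obstacle; the only care needed is the boundary conventions just described.
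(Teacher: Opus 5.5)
Your proof is correct and follows the same route as the paper: read off the top three coefficients of the Cauchy product, then observe that the hypothesis says exactly that $r_{d+e-1}^2 < r_{d+e-2}\,r_{d+e}$, i.e.\ log-concavity fails at index $d+e-1$. The paper leaves implicit what you make explicit, namely the zero conventions when $d<2$ or $e<2$ and the fact that the hypothesis cannot hold when $d+e\le 1$; these additions are correct.
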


\begin{proof}
The top three coefficients are obtained by collecting the unique degree combinations contributing to them. The displayed inequality is exactly the failure of log-concavity at the index $d+e-1$.
\end{proof}

Darroch's classical theorem says that if
\[
P(x)=\sum_{j=0}^d a_jx^j
\]
has positive coefficients and only real zeros, then every mode $m$ satisfies
\[
\left\lfloor\frac{P'(1)}{P(1)}\right\rfloor\le m\le \left\lceil\frac{P'(1)}{P(1)}\right\rceil.
\]
We use this only as a mode-local guide for the real-rooted auxiliary factors that recur throughout the paper~\cite{Darroch1964}.

\begin{proposition}[Darroch localization for the basic branch factors]\label{prop:darroch-basic}
Let
\[
A_t(x)=(1+2x)^t,\qquad B_t(x)=x(1+x)^t,\qquad E_{k,r}(x)=x(1+2x)^{2k+r+3}.
\]
Then every mode of $A_t$, $B_t$, and $E_{k,r}$ lies respectively in
\[
\left\{\left\lfloor\frac{2t}{3}\right\rfloor,\left\lceil\frac{2t}{3}\right\rceil\right\},\qquad
\left\{\left\lfloor1+\frac{t}{2}\right\rfloor,\left\lceil1+\frac{t}{2}\right\rceil\right\},\qquad
\left\{\left\lfloor\frac{4k+2r+9}{3}\right\rfloor,\left\lceil\frac{4k+2r+9}{3}\right\rceil\right\}.
\]
\end{proposition}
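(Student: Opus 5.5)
The plan is to apply Darroch's theorem, as quoted just before the proposition, to each of the three polynomials separately. Each of them has nonnegative coefficients and only real zeros: $A_t$ has the single zero $-1/2$ with multiplicity $t$; $B_t$ has zeros $0$ and $-1$; and $E_{k,r}$ has zeros $0$ and $-1/2$. So each polynomial lies in the real-rooted class.

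One technicality needs handling first. The quoted form of Darroch's theorem assumes positive coefficients, but $B_t$ and $E_{k,r}$ have a vanishing constant term. I would deal with this by factoring out $x$. If $P(x)=xQ(x)$, then the coefficient sequence of $P$ is that of $Q$ shifted by one index, so the modes of $P$ are exactly the modes of $Q$ plus one. The same shift holds for the mean:
\[
\frac{P'(1)}{P(1)}=1+\frac{Q'(1)}{Q(1)}.
\]
The floor and ceiling also shift by exactly one, since $\lfloor 1+y\rfloor=1+\lfloor y\rfloor$ and likewise for the ceiling. Hence Darroch's bound for $Q$ transfers verbatim to $P$. Here $Q=(1+x)^t$ or $Q=(1+2x)^{2k+r+3}$, and both have strictly positive coefficients.

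It then remains to compute the logarithmic derivatives at $x=1$:
\begin{itemize}
\item For $(1+2x)^t$: $\frac{d}{dx}\log(1+2x)^t=\frac{2t}{1+2x}$, which equals $2t/3$ at $x=1$.
\item For $(1+x)^t$: the logarithmic derivative at $x=1$ is $t/2$, so for $B_t$ the value is $1+t/2$.
\item For $E_{k,r}$ (setting $t=2k+r+3$): the value is
\[
1+\frac{2(2k+r+3)}{3}=\frac{4k+2r+9}{3}.
\]
\end{itemize}
Substituting these three values into the floor and ceiling bounds gives exactly the displayed sets.

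For $t=0$, the polynomial $A_0=1$ has its mode at $0$ and $B_0=x$ has its mode at $1$, and both are consistent with the formulas. No step is a real obstacle. The only point requiring care is the zero constant term, which is handled by the shift argument above. One could instead note that Darroch's theorem holds for real-rooted polynomials whose nonzero coefficients form a contiguous block.
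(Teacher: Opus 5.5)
Your proposal is correct and matches the paper's proof: apply Darroch's theorem to $(1+2x)^t$, $(1+x)^t$ and $(1+2x)^{2k+r+3}$, compute the means $2t/3$, $t/2$ and $2(2k+r+3)/3$, and shift by one for the factor $x$. Your explicit treatment of the vanishing constant term, via $P'(1)/P(1)=1+Q'(1)/Q(1)$ and the index shift of modes, spells out what the paper's ``multiplication by $x$ shifts every mode index up by $1$'' leaves implicit.
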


\begin{proof}
The polynomial $A_t(x)$ has only the negative real root $-1/2$, so Darroch's theorem gives the first interval from
\[
\frac{A_t'(1)}{A_t(1)}=\frac{2t}{3}.
\]
For $B_t(x)=x(1+x)^t$, the factor $(1+x)^t$ has only the negative real root $-1$ and
\[
\frac{d}{dx}(1+x)^t\Big|_{x=1}\Big/ (1+1)^t = \frac{t}{2}.
\]
Multiplication by $x$ shifts every mode index up by $1$, giving the second interval. The same shift argument applied to $(1+2x)^{2k+r+3}$ gives the third interval.
\end{proof}

The point is not that Darroch alone proves unimodality here. Rather, Section~4 uses two complementary principles. Ibragimov's theorem handles the main convolution term $H_{k,r}=G_k(x)F_{k+r}(x)$ once a mode is identified, while Darroch, together with Lemma~\ref{lem:adjacent-mode-bridge}, turns the real-rooted correction term into a local bridge device. Once one main summand is known to peak at index $m$ and a Darroch window places the correction term in $\{m,m+1\}$, unimodality of the sum is reduced to the single central comparison between the coefficients at $m$ and $m+1$.

\begin{lemma}[Li-style prefix-plus-tail criterion]\label{lem:prefix-tail}
Let $(a_0,a_1,\dots,a_d)$ be a positive sequence and let $q\in\{1,\dots,d\}$. Assume that
\begin{enumerate}[label=(\arabic*),leftmargin=1.6em]
    \item $a_j^2\ge a_{j-1}a_{j+1}$ for every $1\le j\le q-1$,
    \item $a_q\ge a_{q+1}\ge \cdots\ge a_d$.
\end{enumerate}
Then $(a_0,a_1,\dots,a_d)$ is unimodal.
\end{lemma}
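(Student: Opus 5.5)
The plan is to show that the sequence rises up to index $q$ and falls from $q$ onward. The falling part is exactly hypothesis (2), so the whole argument concerns the prefix $a_0,\dots,a_q$. Since every term is positive, log-concavity can be rewritten as a statement about consecutive ratios. Set $\rho_j:=a_j/a_{j-1}>0$ for $1\le j\le d$. For $1\le j\le q-1$, divide hypothesis (1), $a_j^2\ge a_{j-1}a_{j+1}$, by the positive quantity $a_{j-1}a_j$; this gives $\rho_j\ge\rho_{j+1}$. The ratios $\rho_1\ge\rho_2\ge\cdots\ge\rho_q$ are therefore weakly decreasing.

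Next we locate the prefix peak. Let $p\in\{0,\dots,q\}$ be the largest index with $\rho_j\ge 1$ for all $1\le j\le p$; take $p=0$ if $\rho_1<1$. Because the ratios decrease, $\rho_j\ge1$ for $j\le p$ and $\rho_j<1$ for $p<j\le q$. Hence $a_0\le a_1\le\cdots\le a_p$ and $a_p>a_{p+1}>\cdots>a_q$. Together with hypothesis (2), $a_q\ge a_{q+1}\ge\cdots\ge a_d$, this shows the sequence is weakly increasing up to $p$ and weakly decreasing from $p$ on. This is unimodality, with mode $p\le q$.

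Two edge cases need checking. If $q=1$, hypothesis (1) is vacuous; the sequence is then either increasing up to index $1$ and decreasing after, or decreasing from index $0$, and both are unimodal. If $p=q$, the two monotone pieces meet at $q$ and the conclusion is immediate.

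The only real point to get right is that positivity is used to turn the quadratic inequalities into monotonicity of the ratios. Without positivity, internal zeros would break the argument, so the positivity hypothesis is used essentially.
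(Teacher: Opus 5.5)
Your proof is correct and follows essentially the same route as the paper: positivity turns hypothesis (1) into the decreasing ratio chain $\rho_1\ge\cdots\ge\rho_q$, so the ratios cross level $1$ at most once, giving a prefix peak at some $p\le q$, and hypothesis (2) handles the tail. One small point of wording: your opening sentence says the sequence rises up to index $q$, but your argument actually places the peak at $p\le q$, and the argument is right.
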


\begin{proof}
Set $\rho_j=a_j/a_{j-1}$ for $1\le j\le q$. Condition~(1) implies
\[
\rho_1\ge\rho_2\ge\cdots\ge\rho_q.
\]
Hence the ratio sequence can cross the level $1$ at most once. Equivalently, there is an index $p\in\{0,1,\dots,q\}$ such that
\[
\rho_1,\dots,\rho_p\ge 1
\qquad\text{and}\qquad
\rho_{p+1},\dots,\rho_q\le 1,
\]
where either block may be empty. Therefore
\[
a_0\le a_1\le\cdots\le a_p
\qquad\text{and}\qquad
a_p\ge a_{p+1}\ge\cdots\ge a_q,
\]
so the prefix $(a_0,\dots,a_q)$ is unimodal. Condition~(2) shows that the tail from $a_q$ onward is weakly decreasing, hence the whole sequence is unimodal.
\end{proof}

\begin{corollary}[bipartite reduction]\label{cor:bipartite-reduction}
Let $G$ be a connected bipartite graph with
\[
I(G;x)=\sum_{j=0}^{\alpha(G)} i_jx^j.
\]
Set
\[
q(G):=\left\lceil\frac{2\alpha(G)-1}{3}\right\rceil.
\]
If
\[
i_j^2\ge i_{j-1}i_{j+1}\qquad (1\le j\le q(G)-1),
\]
then $I(G;x)$ is unimodal.
\end{corollary}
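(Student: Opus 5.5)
The plan is to combine Lemma~\ref{lem:prefix-tail}, applied with $q=q(G)$, with the Levit--Mandrescu upper-third monotonicity theorem for bipartite graphs~\cite{LevitMandrescu2006}. That theorem says that for a bipartite graph $G$ with independence number $\alpha=\alpha(G)$, the tail of the independence sequence is weakly decreasing from index $\lceil (2\alpha-1)/3\rceil$ onward:
\[
i_{\lceil (2\alpha-1)/3\rceil}\ge i_{\lceil (2\alpha-1)/3\rceil+1}\ge\cdots\ge i_{\alpha}.
\]
This is exactly condition~(2) of Lemma~\ref{lem:prefix-tail} with $q=q(G)$. The hypothesis of the corollary supplies the log-concavity inequalities for $1\le j\le q(G)-1$, which is condition~(1). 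Together these give unimodality.

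The steps are as follows. First, note that the sequence $(i_0,\dots,i_\alpha)$ is positive. Indeed, $i_0=1$, and every subset of a maximum independent set is independent, so $i_j\ge\binom{\alpha}{j}>0$ for $0\le j\le\alpha$. Second, check that $q(G)$ lies in the range $\{1,\dots,d\}$ required by Lemma~\ref{lem:prefix-tail}, with $d=\alpha$.
\begin{itemize}
\item When $\alpha\ge1$, we have $2\alpha-1\ge1$, so $q(G)\ge1$; and $(2\alpha-1)/3\le\alpha$, so $q(G)\le\alpha$.
\item When $\alpha=0$ the graph is empty. This cannot happen for a connected graph, and in any case a one-term sequence is trivially unimodal.
\end{itemize}
Third, invoke the upper-third theorem to obtain condition~(2). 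Connectedness is included only to match the hypotheses under which that theorem is usually stated; it is valid for all bipartite graphs. Finally, apply Lemma~\ref{lem:prefix-tail}.

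The only step needing care is that the index at which Levit--Mandrescu start the monotone tail matches $q(G)$ exactly, rather than being off by one. Their statement is that $s_{\lceil(2\alpha-1)/3\rceil}\ge\cdots\ge s_\alpha$ for bipartite $G$, which is precisely the form needed here. I would quote it in that form so that the choice of $q(G)$ is justified verbatim.
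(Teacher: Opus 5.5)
Your proposal is correct and matches the paper's proof: the Levit--Mandrescu last-third theorem gives condition~(2) of Lemma~\ref{lem:prefix-tail} with $q=q(G)$, the hypothesis gives condition~(1), and the lemma then yields unimodality. Your extra checks that the sequence is positive ($i_j\ge\binom{\alpha}{j}>0$) and that $1\le q(G)\le\alpha(G)$ are correct; the paper leaves them implicit, but Lemma~\ref{lem:prefix-tail} needs both to apply.
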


\begin{proof}
By the Levit--Mandrescu last-third theorem for K\"onig-Egerv\'ary graphs~\cite{LevitMandrescu2006}, one has
\[
i_{q(G)}\ge i_{q(G)+1}\ge\cdots\ge i_{\alpha(G)}
\]
for connected bipartite $G$. Hence Lemma~\ref{lem:prefix-tail} applies.
\end{proof}

\section{Tree reservoirs, bundle gadgets, and proof tools}

For $t\ge 0$, let $S_t$ denote the rooted bundle obtained from a root $r$ by attaching $t$ copies of $P_2$ to $r$. The corresponding rooted pair is
\[
A_t(x)=(1+2x)^t,\qquad B_t(x)=x(1+x)^t.
\]
These bundles are the atomic building blocks for all families discussed below.

\begin{definition}\label{def:tree-families}
We use the following standard tree families.
\begin{enumerate}[label=(\arabic*),leftmargin=1.8em]
    \item The Kadrawi--Levit family $T_{3,m,n}$ has a root $v_0$ with children $v_1,v_2,v_3$; the vertex $v_1$ carries three $P_2$-arms $v_1a_{1,j}b_{1,j}$ ($1\le j\le 3$), $v_2$ carries $m$ $P_2$-arms $v_2a_{2,j}b_{2,j}$ ($1\le j\le m$), and $v_3$ carries $n$ $P_2$-arms $v_3a_{3,j}b_{3,j}$ ($1\le j\le n$).
    \item The starred family $T^{\ast}_{3,m,n}$ is obtained from $T_{3,m,n}$ by replacing one of the three $P_2$-arms at $v_1$ by a pendant $P_4$.
    \item Galvin's family $T_{m,t}$ has a root $v$, children $w_1,\dots,w_m$, and each $w_i$ carries $t$ $P_2$-arms.
    \item The Bautista--Ramos family $T_{G,m,t}$ is formed from $m$ disjoint copies of $T_{3,t}$ by attaching their roots to a new root $v_0$ and adding one extra leaf adjacent to $v_0$.
\end{enumerate}
\end{definition}

\begin{remark}[current tree reservoirs]\label{rem:current-reservoirs}
There are four especially natural inputs for the unicyclic program.
\begin{enumerate}[label=(\arabic*),leftmargin=1.8em]
    \item \textbf{The KL reservoir.} The first order-$26$ counterexamples expand to the infinite families $T_{3,m,n}$ and $T_{3,m,n}^{\ast}$~\cite{KadrawiLevit2025,Li2026}. Because Li proved both families unimodal, they are the most promising source of explicit infinite unicyclic examples.
    \item \textbf{The Galvin reservoir.} Galvin's family carries very late breaks of log-concavity~\cite{Galvin2025} and is therefore the natural source for closures that preserve or shift late defects.
    \item \textbf{The Ramos--Sun reservoir.} The AI-generated examples of Ramos and Sun supply a large finite seed bank on $27$--$101$ vertices~\cite{RamosSun2025}. Their role is to feed small sporadic unicyclic examples and low-order conjecture mining.
    \item \textbf{The Bautista--Ramos pattern-graph reservoir.} Bautista-Ramos first produced infinite families with arbitrarily many breaks~\cite{BautistaRamos2025}. The newer paper of Bautista-Ramos, Guill\'en-Galv\'an, and G\'omez-Salgado then identified a common pattern-graph mechanism, derived linear recurrences, proved consecutive-break families, and added a zero-locus viewpoint via Beraha--Kahane--Weiss theory~\cite{BautistaRamosGuillenGalvanGomezSalgado2026}. These are the natural candidates for multi-break and consecutive-break unicyclic families.
\end{enumerate}
\end{remark}

\begin{table}[t]
\centering
\small
\renewcommand{\arraystretch}{1.2}
\begin{tabular}{>{\raggedright\arraybackslash}p{0.17\textwidth} >{\raggedright\arraybackslash}p{0.26\textwidth} >{\raggedright\arraybackslash}p{0.22\textwidth} >{\raggedright\arraybackslash}p{0.24\textwidth}}
\toprule
reservoir & source of tree non-log-concavity & currently available shape information & intended role after one-edge closure\\
\midrule
KL & explicit families $T_{3,m,n}$, $T_{3,m,n}^{\ast}$ built from the first order-$26$ seeds & Li proves unimodality for both families & first explicit infinite unicyclic families; parity frontiers under a canonical closure rule\\
Galvin & infinite family with a break close to $\alpha(T)$ & late-break asymptotics & closures that preserve or shift very late defects\\
Ramos--Sun & many AI-generated finite counterexamples on $27$--$101$ vertices & empirical catalogue rather than symbolic formulas & sporadic finite unicyclic counterexamples; low-order search guidance\\
Bautista--Ramos / pattern-graph & arbitrarily many breaks, then consecutive-break pattern families and linear recurrences & multiple-break and consecutive-break profile; recurrence algebra; zero-locus information for basic $P_2$-based reservoirs & candidate multi-break and consecutive-break unicyclic families\\
\bottomrule
\end{tabular}
\caption{The current tree reservoirs that feed the unicyclic program.}
\label{tab:reservoirs}
\end{table}

\begin{theorem}[upper-third monotonicity on the even-cycle side]\label{thm:upper-third}
Let $H=T+uv$ be a one-edge closure of a tree, and assume $\dist_T(u,v)$ is odd. If
\[
I(H;x)=\sum_{k=0}^{\alpha(H)} h_kx^k,
\]
then
\[
h_{\lceil(2\alpha(H)-1)/3\rceil}\ge h_{\lceil(2\alpha(H)-1)/3\rceil+1}\ge \cdots \ge h_{\alpha(H)}.
\]
\end{theorem}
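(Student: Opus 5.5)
The plan is to reduce the statement to the Levit--Mandrescu last-third theorem, exactly as in the proof of Corollary~\ref{cor:bipartite-reduction}. That theorem says: if $G$ is a König--Egerváry graph, in particular any bipartite graph, with $I(G;x)=\sum s_kx^k$, then $s_{\lceil(2\alpha(G)-1)/3\rceil}\ge\cdots\ge s_{\alpha(G)}$. It therefore suffices to show that $H$ satisfies its hypotheses; once that is done, the displayed chain of inequalities is the conclusion of the theorem applied with $G=H$.

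\textbf{Step 1: $H$ is bipartite.} Since $\dist_T(u,v)$ is odd, the second half of Proposition~\ref{prop:parity-dictionary} applies. The unique cycle of $H=T+uv$ has even length $\dist_T(u,v)+1$, so $H$ is bipartite. Concretely, $u$ and $v$ lie in opposite classes of the bipartition $A\cup B$ of $T$, so the same bipartition remains proper for $H$.

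\textbf{Step 2: $H$ is connected.} $T$ is a tree, hence connected, and adding an edge preserves connectivity. So $H$ is a connected bipartite graph, and therefore König--Egerváry by König's theorem.

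\textbf{Step 3: apply the theorem.} Invoke the Levit--Mandrescu theorem~\cite{LevitMandrescu2006} for $G=H$ with $\alpha(H)$ in place of $\alpha(G)$. This gives
\[
h_{\lceil(2\alpha(H)-1)/3\rceil}\ge h_{\lceil(2\alpha(H)-1)/3\rceil+1}\ge\cdots\ge h_{\alpha(H)}.
\]
One point to check is that the theorem is stated with this threshold and these indices; this is the form already used in Corollary~\ref{cor:bipartite-reduction}. If the cited version uses the slightly different index $\lceil(2\alpha-1)/3\rceil$ versus $\lceil(2\alpha)/3\rceil$, I will state the result with the threshold from the source.

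There is no real obstacle here. The only content beyond citation is the parity translation in Step 1, which Proposition~\ref{prop:parity-dictionary} already supplies. Note also that $\alpha(H)$ may be smaller than $\alpha(T)$: it drops by one exactly when every maximum independent set of $T$ contains both $u$ and $v$. This does not matter, because the theorem is applied intrinsically to $H$ and never refers to $T$'s coefficients.
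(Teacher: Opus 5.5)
Your proposal is correct and is essentially the paper's proof: Proposition~\ref{prop:parity-dictionary} makes $H$ bipartite, hence K\"onig--Egerv\'ary, and the Levit--Mandrescu last-third theorem with threshold $\lceil(2\alpha-1)/3\rceil$ then gives the chain directly for $H$. Your Step~2 (connectivity) is harmless but unnecessary, since every bipartite graph is K\"onig--Egerv\'ary whether or not it is connected.
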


\begin{proof}
By Proposition~\ref{prop:parity-dictionary}, the graph $H$ is bipartite. Every bipartite graph is K\"onig-Egerv\'ary, so the theorem follows from Levit and Mandrescu's monotonicity theorem for independence polynomials of K\"onig-Egerv\'ary graphs~\cite{LevitMandrescu2006}.
\end{proof}

\begin{corollary}[tail control for even-cycle closures]\label{cor:tail-control}
For an even-cycle closure $H=T+uv$, any failure of unimodality must occur before the final third of the coefficient sequence.
\end{corollary}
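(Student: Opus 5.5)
The statement is Corollary~\ref{cor:tail-control}: for an even-cycle closure, any failure of unimodality lies before the final third. I will derive it directly from Theorem~\ref{thm:upper-third}.

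First, since $H$ is an even-cycle closure, its unique cycle has even length. By Proposition~\ref{prop:parity-dictionary} this is equivalent to $\dist_T(u,v)$ being odd, and to $H$ being bipartite. So the hypothesis of Theorem~\ref{thm:upper-third} holds. Set $q:=\lceil(2\alpha(H)-1)/3\rceil$. The theorem then gives
\[
h_q\ge h_{q+1}\ge\cdots\ge h_{\alpha(H)}.
\]

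Next I make precise what ``a failure of unimodality occurs at'' an index means. A nonnegative sequence with no internal zeros fails unimodality exactly when there are indices $i<j<l$ with $h_j<\min(h_i,h_l)$. Equivalently, some ascent $h_{j}<h_{j+1}$ occurs after some strict descent $h_{i}>h_{i+1}$ with $i<j$. (The sequence $(h_k)$ has no internal zeros, since every subset of an independent set is independent.)

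By the displayed chain, there is no ascent at any index $j\ge q$. Hence every witnessing ascent $h_j<h_{j+1}$ has $j\le q-1$, and so does the descent preceding it. Thus the whole witnessing pattern lives in the coefficients $h_0,\dots,h_q$, that is, before the final third $\{q,\dots,\alpha(H)\}$. Put another way, $(h_k)$ is unimodal if and only if its prefix $(h_0,\dots,h_q)$ is unimodal. This is the same gluing step used in the proof of Lemma~\ref{lem:prefix-tail}: a unimodal prefix ending at $h_q$, followed by a weakly decreasing tail starting at $h_q$, is unimodal.

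There is no real obstacle here. The only point needing care is the interpretation of ``occurs before the final third,'' which I make precise through the location of the offending ascent as above.
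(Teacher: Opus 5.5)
Your proposal is correct and follows the paper's route: the paper also reads the corollary off directly from the weakly decreasing tail $h_q\ge\cdots\ge h_{\alpha(H)}$ given by Theorem~\ref{thm:upper-third}, and you additionally match that theorem's hypothesis via Proposition~\ref{prop:parity-dictionary}. Your formalization of where a failure ``occurs'' is a reasonable sharpening of the paper's one-line argument: a witnessing valley lies inside $h_0,\dots,h_q$ (its ascent may end at $h_q$ itself), equivalently the whole sequence is unimodal iff this prefix is.
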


\begin{proof}
Theorem~\ref{thm:upper-third} shows that the terminal segment beginning at $\lceil(2\alpha(H)-1)/3\rceil$ is decreasing.
\end{proof}

\begin{remark}[why the tail theorem is only partial]\label{rem:partial-tail}
Theorem~\ref{thm:upper-third} does not prove that every even-cycle closure is unimodal. That distinction is essential: bipartite graphs need not have unimodal independence sequences in general~\cite{BhattacharyyaKahn2013}. The theorem removes the upper tail from the list of possible trouble spots, but it does not settle the full problem.
\end{remark}

\begin{remark}[Li-type calculations on the KL side]\label{rem:li-type}
For KL closures, and especially for odd-cycle closures where no general K\"onig-Egerv\'ary tail theorem applies, the natural unimodality route is a direct coefficient comparison in the spirit of Li's proof for $T_{3,m,n}$ and $T_{3,m,n}^{\ast}$~\cite{Li2026}. The transport formulas above identify exactly which corrected coefficients need to be compared once the polynomial of $F_{u,v}$ is explicit.
\end{remark}

\begin{proposition}[reduction to two symbolic tasks]\label{prop:two-tasks}
Fix a tree family $\mathcal{T}$ together with a canonical rule that selects a nonedge $u_Tv_T$ in each $T\in\mathcal{T}$. Put
\[
H_T=T+u_Tv_T,\qquad F_T=T-\N[u_T]-\N[v_T].
\]
Then the construction of a family of unimodal but non-log-concave unicyclic graphs from $\mathcal{T}$ reduces to the following two tasks:
\begin{enumerate}[label=(\roman*),leftmargin=1.5em]
    \item prove unimodality of $I(H_T;x)$;
    \item use Proposition~\ref{prop:defect-transport} to find an index $k$ with $\Delta_k(H_T)<0$.
\end{enumerate}
Moreover, if $\dist_T(u_T,v_T)$ is odd, then task~\textup{(i)} is already settled on the final third of the coefficient sequence by Theorem~\ref{thm:upper-third}.
\end{proposition}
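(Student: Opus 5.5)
The statement to prove is Proposition~\ref{prop:two-tasks}. It is essentially bookkeeping, so the plan is to unpack what ``unimodal but non-log-concave'' means and match each half to a task.

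Fix $T\in\mathcal{T}$ and the canonically selected nonedge $u_Tv_T$. The first step records that $H_T=T+u_Tv_T$ is unicyclic: $T$ has a unique $u_T$--$v_T$ path, and adding the edge closes exactly one cycle. So $H_T$ lies in the target class automatically, and the family $\{H_T\}$ is a family of unicyclic graphs. The property ``unimodal but not log-concave'' for $I(H_T;x)$ is a conjunction of two statements.
\begin{enumerate}[label=(\alph*),leftmargin=1.6em]
    \item The first conjunct, unimodality, is literally task~(i).
    \item The second conjunct, failure of log-concavity, means there is an index $k$ with $\Delta_k(H_T)<0$, by the definition of $\Delta_k$.
\end{enumerate}

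To handle conjunct~(b), I would invoke Corollary~\ref{cor:coeff-transport} and Proposition~\ref{prop:defect-transport}. These express $\Delta_k(H_T)$ exactly in terms of $\Delta_k(T)$, the coefficients $t_j$ of $I(T;x)$, and the coefficients $r_j$ of $I(F_T;x)$, where $F_T=T-\N[u_T]-\N[v_T]$. Exhibiting a negative $\Delta_k(H_T)$ is therefore precisely task~(ii), carried out in the variables $t_j,r_j$. Corollary~\ref{cor:support-window} can be cited to note that only indices near the support of the shifted $r$-sequence need examining, unless a tree break is simply inherited. Conversely, solving both tasks yields both conjuncts, which gives the reduction.

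For the ``moreover'' clause, assume $\dist_T(u_T,v_T)$ is odd. Proposition~\ref{prop:parity-dictionary} then shows that $H_T$ is bipartite. Theorem~\ref{thm:upper-third} gives that the coefficients $h_j$ are weakly decreasing from index $q=\lceil(2\alpha(H_T)-1)/3\rceil$ to $\alpha(H_T)$. This is hypothesis~(2) of Lemma~\ref{lem:prefix-tail}, so any violation of unimodality can only occur among $h_0,\dots,h_q$. In that sense task~(i) is settled on the final third. If desired, Corollary~\ref{cor:bipartite-reduction} sharpens this: log-concavity on the prefix $1\le j\le q-1$ suffices for unimodality.

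No step is a genuine obstacle, since each implication is a direct citation. The only point needing care is the precise meaning of ``settled on the final third.'' Theorem~\ref{thm:upper-third} controls only the tail monotonicity. It does not give a full unimodality proof, as Remark~\ref{rem:partial-tail} warns, and I would state the conclusion in exactly that restricted form.
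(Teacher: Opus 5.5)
Your proposal is correct and follows essentially the same route as the paper: $H_T$ is unicyclic by construction, ``unimodal but not log-concave'' is exactly the conjunction of tasks (i) and (ii), and the odd-distance clause comes from tail monotonicity via Theorem~\ref{thm:upper-third} (which the paper cites through Corollary~\ref{cor:tail-control}). Your added remarks on Lemma~\ref{lem:prefix-tail} and on the restricted meaning of ``settled on the final third'' are accurate refinements, not a different argument.
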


\begin{proof}
The graph $H_T$ is unicyclic by construction. It is unimodal but not log-concave exactly when conditions \textup{(i)} and \textup{(ii)} both hold. The last sentence is Corollary~\ref{cor:tail-control}.
\end{proof}

\section{The KL-closure family and the flagship theorem}

We now pass from the general closure machinery to the first explicit infinite family. Throughout this section,
\[
F_t(x):=(1+2x)^t+x(1+x)^t,
\]
which should not be confused with the reduced forest notation $F_{u,v}$ from Section~2.

\begin{definition}\label{def:Umn}
For integers $m,n\ge 1$, choose one fixed $P_2$-arm $v_2a_{2,1}b_{2,1}$ in the $m$-bundle at $v_2$ of $T_{3,m,n}$, and let $U_{m,n}$ be the unicyclic graph obtained by adding the edge $v_1a_{2,1}$. For $r\ge 0$ we abbreviate
\[
U_{k,r}:=U_{k,k+r}.
\]
In particular, the symmetric family $U_{k,0}=U_{k,k}$ is the most basic cycle-closure analogue of $T_{3,k,k}$.
\end{definition}

\begin{proposition}[exact subtraction formula]\label{prop:exact-subtraction}
For all integers $m,n\ge 1$,
\[
I(U_{m,n};x)=I(T_{3,m,n};x)-R_{m,n}(x),
\]
where
\[
I(T_{3,m,n};x)=F_3(x)F_m(x)F_n(x)+x(1+2x)^{m+n+3}
\]
and
\[
R_{m,n}(x)=x^2(1+x)^3(1+2x)^{m-1}F_n(x).
\]
\end{proposition}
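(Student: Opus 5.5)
We derive both formulas directly from Proposition~\ref{prop:edge-addition} with $u=v_1$ and $v=a_{2,1}$. First we compute $I(T_{3,m,n};x)$ by conditioning on the root $v_0$.

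If $v_0$ is excluded, the tree splits into the three rooted bundles $S_3$, $S_m$, $S_n$ at $v_1,v_2,v_3$. For each bundle $S_t$, conditioning on whether its root is used gives
\[
I(S_t;x)=A_t(x)+B_t(x)=(1+2x)^t+x(1+x)^t=F_t(x).
\]
If $v_0$ is included, then $v_1,v_2,v_3$ are excluded. What remains is $3+m+n$ disjoint copies of $P_2$, each contributing $1+2x$. This yields the term $x(1+2x)^{m+n+3}$, and adding the two cases gives the stated formula for $I(T_{3,m,n};x)$.

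Next we identify the reduced forest $F_{u,v}=T_{3,m,n}-\N[v_1]-\N[a_{2,1}]$. Note that $v_1$ and $a_{2,1}$ are nonadjacent, since their tree distance is $3$ via $v_1v_0v_2a_{2,1}$.
\begin{itemize}[leftmargin=1.6em]
    \item $\N[v_1]=\{v_1,v_0,a_{1,1},a_{1,2},a_{1,3}\}$.
    \item $\N[a_{2,1}]=\{a_{2,1},v_2,b_{2,1}\}$.
\end{itemize}
Removing these vertices leaves the following components:
\begin{itemize}[leftmargin=1.6em]
    \item the three isolated leaves $b_{1,1},b_{1,2},b_{1,3}$, contributing $(1+x)^3$;
    \item the $m-1$ remaining arms $a_{2,j}b_{2,j}$ with $j\ge2$, now detached from the deleted $v_2$, each a $P_2$, contributing $(1+2x)^{m-1}$;
    \item the intact bundle $S_n$ rooted at $v_3$, whose only neighbour $v_0$ outside the bundle was deleted, contributing $F_n(x)$.
\end{itemize}
Multiplying these gives
\[
I(F_{u,v};x)=(1+x)^3(1+2x)^{m-1}F_n(x).
\]
Multiplying by $x^2$ and invoking Proposition~\ref{prop:edge-addition} then yields $R_{m,n}(x)$ and the stated identity.

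The only delicate step is the bookkeeping of which vertices survive the deletion. In particular, one must check that $v_3$ stays attached to its full bundle, and that the $v_2$-arms other than the chosen one become free $P_2$'s rather than a bundle. Once that is verified, everything else is a direct product computation.
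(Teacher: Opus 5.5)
Your proposal is correct and follows the paper's proof essentially verbatim: root deletion at $v_0$ to get $F_3F_mF_n+x(1+2x)^{m+n+3}$, then Proposition~\ref{prop:edge-addition} with the reduced forest consisting of three isolated leaves, $m-1$ free $P_2$'s, and the intact $n$-bundle at $v_3$. Your explicit listing of $\N[v_1]$ and $\N[a_{2,1}]$, together with the distance-$3$ check for nonadjacency, spells out bookkeeping that the paper leaves implicit.
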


\begin{proof}
The tree $T_{3,m,n}$ is handled by standard root deletion at $v_0$: if $v_0$ is excluded we obtain the product $F_3F_mF_n$, while if $v_0$ is included each branch contributes a factor $(1+2x)$ for every $P_2$-arm, giving $x(1+2x)^{m+n+3}$. The subtraction term comes from Proposition~\ref{prop:edge-addition} with the chosen nonedge $v_1a_{2,1}$. Deleting the closed neighborhoods of $v_1$ and $a_{2,1}$ leaves three isolated leaves, the $m-1$ remaining $P_2$-arms on the $v_2$-side, and the intact $n$-bundle on the $v_3$-side. This gives the stated formula for $R_{m,n}(x)$.
\end{proof}

\begin{proposition}[reduction to $T_{3,k-1,0}$]\label{prop:reduction-Gk}
For every $k\ge 1$ and $r\ge 0$,
\[
I(U_{k,r};x)=I(T_{3,k-1,0};x)F_{k+r}(x)+x(1+2x)^{2k+r+3}.
\]
Equivalently, if
\[
G_k(x):=I(T_{3,k-1,0};x),\qquad H_{k,r}(x):=G_k(x)F_{k+r}(x),\qquad E_{k,r}(x):=x(1+2x)^{2k+r+3},
\]
then
\[
I(U_{k,r};x)=H_{k,r}(x)+E_{k,r}(x).
\]
Moreover,
\[
G_k(x)=F_4(x)(1+2x)^{k-1}+x(1+x)^kF_3(x).
\]
\end{proposition}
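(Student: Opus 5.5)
The plan is pure polynomial algebra, starting from Proposition~\ref{prop:exact-subtraction} with $m=k$ and $n=k+r$, and writing $F_n$ for $F_{k+r}$. That proposition gives
\[
I(U_{k,r};x)=F_3F_kF_n+x(1+2x)^{2k+r+3}-x^2(1+x)^3(1+2x)^{k-1}F_n .
\]
The term $x(1+2x)^{2k+r+3}$ is already $E_{k,r}(x)$. The other two terms share the factor $F_n$, so I will factor it out and obtain
\[
I(U_{k,r};x)=F_n(x)\,\widetilde G_k(x)+E_{k,r}(x),\qquad
\widetilde G_k:=F_3F_k-x^2(1+x)^3(1+2x)^{k-1}.
\]
Everything then reduces to two identities: $\widetilde G_k=F_4(1+2x)^{k-1}+x(1+x)^kF_3$, and this common expression equals $I(T_{3,k-1,0};x)$.

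For the first identity, I expand $F_k=(1+2x)^k+x(1+x)^k$. The piece $x(1+x)^kF_3$ passes through unchanged. The remaining piece is $(1+2x)^{k-1}$ times
\[
(1+2x)F_3-x^2(1+x)^3.
\]
Substituting $F_3=(1+2x)^3+x(1+x)^3$ turns this into $(1+2x)^4+x(1+x)^3\bigl[(1+2x)-x\bigr]$, and the bracket equals $1+x$. The result is $(1+2x)^4+x(1+x)^4=F_4$, which proves the displayed formula for $G_k$.

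For the second identity, I compute $I(T_{3,k-1,0};x)$ by deleting the root, exactly as in the proof of Proposition~\ref{prop:exact-subtraction}. In $T_{3,k-1,0}$ the vertex $v_3$ carries no arms, so it is a leaf and $F_0=1+x$. Root deletion then gives
\[
I(T_{3,k-1,0};x)=(1+x)F_3F_{k-1}+x(1+2x)^{k+2}.
\]
Expanding $F_{k-1}=(1+2x)^{k-1}+x(1+x)^{k-1}$, the part $(1+x)F_3\cdot x(1+x)^{k-1}$ equals $x(1+x)^kF_3$. The remaining terms share the factor $(1+2x)^{k-1}$ and leave
\[
(1+x)F_3+x(1+2x)^3=(1+2x)^4+x(1+x)^4=F_4 ,
\]
where I again substitute $F_3=(1+2x)^3+x(1+x)^3$. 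Hence $I(T_{3,k-1,0};x)=F_4(1+2x)^{k-1}+x(1+x)^kF_3=\widetilde G_k$, which completes the proof.

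As an optional structural check, one can see why $T_{3,k-1,0}$ appears combinatorially. Independent sets of $U_{k,r}$ that avoid $v_0$ split over the three branches. Inside the $v_1$–$v_2$ part, the closing edge $v_1a_{2,1}$ forbids $\{v_1,a_{2,1}\}$ together. The vertex $a_{2,1}$ then acts with its leaf $b_{2,1}$ like a single leaf-type gadget, so this part mirrors $T_{3,k-1,0}$ with $v_3$ as a leaf.

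I expect no genuine obstacle. The only delicate step is the bookkeeping of the factor $(1+2x)^{k-1}$ versus $(1+2x)^k$, which produces the key cancellation $(1+2x)-x=1+x$. The case $k=1$ also needs a check: there $T_{3,0,0}$ has two leaf children, and the formulas still hold with $F_0=1+x$ and $(1+2x)^0=1$.
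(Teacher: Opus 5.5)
Your proof is correct and is essentially the paper's: both start from Proposition~\ref{prop:exact-subtraction}, factor out $F_{k+r}$, and identify the cofactor $F_3F_k-x^2(1+x)^3(1+2x)^{k-1}$ with the root-deletion expression $(1+x)F_3F_{k-1}+x(1+2x)^{k+2}$ for $I(T_{3,k-1,0};x)$. The only difference is that you match both sides to the closed form $F_4(1+2x)^{k-1}+x(1+x)^kF_3$ rather than converting one into the other directly, which also sidesteps the recurrence misprinted in the paper's proof (the usable one is $F_k=(1+x)F_{k-1}+x(1+2x)^{k-1}$).

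Your optional structural remark can be made exact and gives the factorization with no polynomial manipulation: excluding $v_0$ leaves the $v_3$-branch (contributing $F_{k+r}$) together with a component literally isomorphic to $T_{3,k-1,0}$, with $a_{2,1}$ as root and $b_{2,1}$ as its leaf child, while including $v_0$ contributes $E_{k,r}$.
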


\begin{proof}
Starting from Proposition~\ref{prop:exact-subtraction} with $m=k$ and $n=k+r$, we have
\[
I(U_{k,r};x)=F_3F_kF_{k+r}+x(1+2x)^{2k+r+3}-x^2(1+x)^3(1+2x)^{k-1}F_{k+r}.
\]
Using
\[
F_k=(1+2x)F_{k-1}+x(1+x)^{k-1}(1+x),
\]
we obtain
\[
F_3F_k-x^2(1+x)^3(1+2x)^{k-1}=(1+x)F_3F_{k-1}+x(1+2x)^{k+2}.
\]
The right-hand side is exactly the standard root-deletion formula for $I(T_{3,k-1,0};x)$, because in $T_{3,k-1,0}$ the root $v_0$ has one $3$-arm branch, one $(k-1)$-arm branch, and one leaf branch. Substituting yields the claimed factorization, and expanding once more gives the displayed formula for $G_k(x)$.
\end{proof}

\begin{theorem}[direct log-concavity of $T_{3,k-1,0}$]\label{thm:Gk-logconcave}
For every integer $k\ge 1$, the polynomial
\[
G_k(x)=I(T_{3,k-1,0};x)=F_4(x)(1+2x)^{k-1}+x(1+x)^kF_3(x)
\]
is log-concave.
\end{theorem}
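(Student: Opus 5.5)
The plan is to treat $G_k$ as a dominant log-concave term plus a small log-concave perturbation, and to show that the cross terms in the log-concavity defect are absorbed by the slack of the dominant term. Write
\[
G_k(x)=a(x)+b(x),\qquad a(x)=F_4(x)(1+2x)^{k-1},\qquad b(x)=x(1+x)^kF_3(x),
\]
with coefficient sequences $(a_j)$ and $(b_j)$. Directly,
\[
F_4(x)=1+9x+28x^2+38x^3+20x^4+x^5,\qquad F_3(x)=1+7x+15x^2+11x^3+x^4,
\]
and both sequences are log-concave without internal zeros. Since products of such sequences are again log-concave, both $a$ and $b$ are log-concave. The defect then splits as
\[
\Delta_j(G_k)=\Delta_j(a)+\Delta_j(b)+\bigl(2a_jb_j-a_{j-1}b_{j+1}-a_{j+1}b_{j-1}\bigr),
\]
and the whole task is to show that the possibly negative cross bracket is dominated by $\Delta_j(a)$.

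\textbf{Step 1: quantitative slack for $a$.} I would first check that $F_4$ is real-rooted; if it is not, I would use an explicit strictly log-concave comparison instead. Given real-rootedness, $a$ is real-rooted of degree $k+4$, and Newton's inequalities give
\[
a_j^2\ge\Bigl(1+\tfrac1j\Bigr)\Bigl(1+\tfrac1{k+4-j}\Bigr)a_{j-1}a_{j+1},
\]
so $\Delta_j(a)\ge c\,a_{j-1}a_{j+1}\bigl(\tfrac1j+\tfrac1{k+4-j}\bigr)$.

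\textbf{Step 2: relative smallness of $b$.} Expanding both terms in binomials,
\[
a_j=\sum_i f^{(4)}_i\binom{k-1}{j-i}2^{j-i},\qquad b_j=\sum_i f^{(3)}_i\binom{k}{j-1-i}.
\]
From these formulas I would prove a bound of the form $b_{j\pm1}/a_j\le C\,(\text{poly in }j)\,2^{-j}$, uniformly in $k$, for $1\le j\le k+2$. This makes the cross bracket at most $C' a_j^2\,\mathrm{poly}(j)\,2^{-j}\cdot\tfrac1{a_{j-1}a_{j+1}/a_j^2}$. The ratios $a_{j\pm1}/a_j$ are controlled by the binomial ratios $\tfrac{2(k-j)}{j}$, so this bound is comparable to the Newton slack from Step 1. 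It follows that there is an absolute threshold $J$ such that the cross bracket is beaten by $\Delta_j(a)$ for $J\le j\le k+2$.

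\textbf{Step 3: the remaining indices.} Three residual ranges are left.
\begin{itemize}[leftmargin=1.6em]
\item For $1\le j<J$, each coefficient $a_{j-1},a_j,a_{j+1},b_{j-1},b_j,b_{j+1}$ is an explicit polynomial in $k$. So $\Delta_j(G_k)$ is a polynomial in $k$ whose positivity for all $k$ beyond a small bound can be checked symbolically, by looking at leading coefficients and then rechecking finitely many $k$.
\item The top indices $j\in\{k+3,k+4\}$ involve only the explicit top coefficients: $a_{k+4}=2^{k-1}$, $a_{k+5}=0$, $b_{k+5}=1$, $b_{k+4}=k+11$, together with similar closed forms one step lower. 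The required inequalities there are elementary, because powers of $2$ dominate polynomials in $k$.
\item Finitely many small $k$ are verified by direct computation.
\end{itemize}

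The main obstacle is making Step 2 genuinely uniform, in particular near the lower end $j\approx J$ and near the top $j\approx k$. At the lower end the factor $2^{-j}$ has not yet kicked in. Near the top the Newton slack $\tfrac1{k+4-j}$ competes with $b$ becoming relatively larger, since there $b_j/a_j\sim\binom{k}{s}2^{-k}$ with $s=k+4-j$. My first attempt would be to parametrize by $s=k+4-j$ near the top and compare $\binom{k}{s}$ with $2^{k-s}\binom{k-1}{s-1}$ directly, showing that the ratio is $O(s\,2^{-(k-s)})$. This makes $b$ negligible except when $s$ is close to $k$, which is exactly the lower-end range already covered by the polynomial check.
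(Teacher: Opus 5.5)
Your perturbative plan (dominant $a=F_4(1+2x)^{k-1}$, small $b=x(1+x)^kF_3$, absorb the cross bracket into the slack of $a$) differs from the paper's and can be completed. As written, however, two load-bearing steps fail.

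\textbf{Step 1.} The premise is false: $F_4(x)=x^5+20x^4+38x^3+28x^2+9x+1$ is \emph{not} real-rooted. Sign changes give real roots $r_1\in(-0.235,-0.23)$, $r_2\in(-0.66,-0.65)$ and $r_3\in(-18,-17.96)$. If the other two roots were real, their sum $-20-(r_1+r_2+r_3)$ would lie in $(-1.16,-1.105)$ and their product $-1/(r_1r_2r_3)$ in $(0.358,0.373)$. That contradicts $(r_4+r_5)^2\ge 4r_4r_5$, so there is a conjugate pair near $-0.57\pm 0.20i$ and $a$ is not real-rooted either. Your fallback (``an explicit strictly log-concave comparison'') is never specified. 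Yet this slack drives the whole argument: plain log-concavity of $a$ gives only $\Delta_j(a)\ge 0$, with nothing left to absorb the cross terms. The missing idea is ultra-log-concavity. The coefficients of $F_4$ do satisfy the degree-$5$ Newton inequalities ($81\ge 70$, $784\ge 684$, $1444\ge 1120$, $400\ge 95$), so $F_4$ is ULC$(5)$, and $(1+2x)^{k-1}$ is trivially ULC$(k-1)$. Liggett's theorem (a convolution of ULC$(m)$ and ULC$(n)$ sequences is ULC$(m+n)$) then gives exactly $a_j^2\ge(1+\tfrac1j)(1+\tfrac1{k+4-j})\,a_{j-1}a_{j+1}$ with no real-rootedness.

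\textbf{Step 2.} As phrased, this does not give a $k$-independent threshold. If you bound $b_{j-1}$ against $a_j$ and convert via $a_j/a_{j-1}\approx 2(k-j)/j$, the term $a_{j+1}b_{j-1}$ picks up a factor of order $k/j$ relative to $a_{j-1}a_{j+1}$ for bounded $j$. Then $J$ drifts like $\log_2 k$, and the residual ``$j<J$'' checks are no longer finite. Compare at the same index instead. The negative part of the cross bracket divided by $a_{j-1}a_{j+1}$ is at most $b_{j+1}/a_{j+1}+b_{j-1}/a_{j-1}$. Writing $\binom{k}{m}=\binom{k-1}{m}+\binom{k-1}{m-1}$ in $b_i$ and matching terms against $f^{(4)}_m2^{i-m}\binom{k-1}{i-m}$ in $a_i$ gives $b_i\le 32(i+7)2^{-i}a_i$ for $1\le i\le k+4$, uniformly in $k$. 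Combined with the ULC slack $\ge 1/j$, this settles every $15\le j\le k+3$. What remains is $j\le 14$, finitely many polynomials in $k$ to check, and $j=k+4$, which is trivial since $a_{k+5}=0$ and $g_{k+4}=2^{k-1}+k+11$.

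With these repairs your argument is a valid alternative. The paper instead derives an exact closed form $\Delta_{k,j}=\frac{(u+v-4)!(u+v-3)!}{(u+1)!(u+2)!(v+1)!(v+2)!}\Phi(u,v)$ with $u=j-1$, $v=k+4-j$. It settles $u\le 3$ by explicit factorizations and $u\ge 4$ by coefficientwise nonnegativity in $v$, using $2^s\ge s/2$. That is a single certificate with no thresholds, but it rests on heavy symbolic simplification. Your route explains why log-concavity holds ($b$ is exponentially smaller than $a$ at each index, while $a$ carries Newton-type slack), at the cost of explicit constants and a finite residual computation.
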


\begin{proof}
Write
\[
G_k(x)=\sum_{j=0}^{k+5} g_{k,j}x^j.
\]
From the explicit expression above one gets
\begin{align*}
g_{k,j}
&=2^j\binom{k-1}{j}+9\,2^{j-1}\binom{k-1}{j-1}+28\,2^{j-2}\binom{k-1}{j-2}
  +38\,2^{j-3}\binom{k-1}{j-3}\\
&\quad +20\,2^{j-4}\binom{k-1}{j-4}+2^{j-5}\binom{k-1}{j-5}
  +\binom{k}{j-1}+7\binom{k}{j-2}+15\binom{k}{j-3}\\
&\quad +11\binom{k}{j-4}+\binom{k}{j-5}.
\end{align*}
For $1\le j\le k+4$, set
\[
u:=j-1,\qquad v:=k+4-j.
\]
Then $u,v\ge 0$, $u+v=k+3$, and a direct symbolic simplification yields
\[
\Delta_{k,j}:=g_{k,j}^2-g_{k,j-1}g_{k,j+1}
=\frac{(u+v-4)!(u+v-3)!}{(u+1)!(u+2)!(v+1)!(v+2)!}\,\Phi(u,v),
\]
where $\Phi(u,v)\in\mathbb{Z}[u,v,2^u]$. Since the factorial prefactor is positive, it is enough to prove $\Phi(u,v)\ge 0$.

For the boundary values $u=0,1,2,3$, the polynomial $\Phi$ factors as follows:
{\small
\begin{align*}
\Phi(0,v)&=512v^2(v-4)(v-3)^2(v-2)^2(v-1)^2(v+1)(2v^2+7v+4),\\
\Phi(1,v)&=1024v^2(v-3)(v-2)^2(v-1)^2(v+1)
\bigl(4v^4+29v^3+98v^2+64v-93\bigr),\\
\Phi(2,v)&=1024v^2(v-2)(v-1)^2(v+1)\bigl(16v^6+204v^5+1111v^4\\
&\qquad\qquad\qquad\qquad\qquad +2784v^3+3661v^2+1332v+5076\bigr),\\
\Phi(3,v)&=1024v^2(v-1)(v+1)\bigl(64v^8+1304v^7+10937v^6+47615v^5\\
&\qquad\qquad\qquad\qquad\qquad +131831v^4+226301v^3+256768v^2+235980v+51120\bigr).
\end{align*}
}
Since $u+v=k+3\ge 4$, the cases $u=0,1,2,3$ force $v\ge 4,3,2,1$ respectively. Hence every displayed factorization is nonnegative.

Assume now that $u\ge 4$ and write $u=s+4$ with $s\ge 0$. Then
\[
\Phi(s+4,v)=\sum_{m=0}^{12} c_m(s)v^m.
\]
A direct coefficient extraction shows that each $c_m(s)$ has the form
\[
c_m(s)=A_m(s)4^s-B_m(s)2^s+C_m(s),
\]
where $A_m,B_m,C_m\in\mathbb{Z}_{\ge 0}[s]$, and moreover the polynomial
\[
\frac{s}{2}A_m(s)-B_m(s)
\]
also has nonnegative coefficients. For example,
\[
c_{12}(s)=1024\cdot 4^s,
\]
and
\[
c_{11}(s)=16\cdot 2^s\bigl(576\cdot 2^s s+2112\cdot 2^s-s^2-3s+10\bigr).
\]
Since $2^s\ge s/2$ for every $s\ge 0$, it follows that
\[
c_m(s)\ge 2^s\left(\frac{s}{2}A_m(s)-B_m(s)\right)+C_m(s)\ge 0.
\]
Hence every coefficient of $\Phi(s+4,v)$ is nonnegative, so $\Phi(s+4,v)\ge 0$ for all $v\ge 0$. Therefore $\Delta_{k,j}\ge 0$ for every $1\le j\le k+4$, and the coefficient sequence of $G_k(x)$ is log-concave.
\end{proof}

\begin{proposition}[the branch factor $F_t$]\label{prop:Ft-logconcave}
For every integer $t\ge 1$, the polynomial
\[
F_t(x)=(1+2x)^t+x(1+x)^t
\]
has a positive log-concave coefficient sequence. Hence it is unimodal. Moreover, for every $t\ge 3$ a mode of $F_t$ is
\[
m_t:=\left\lfloor\frac{2t+1}{3}\right\rfloor.
\]
This mode is unique except when $t=5$, in which case $F_5$ has the two adjacent modes $3$ and $4$. The small cases are
\[
F_1(x)=1+3x+x^2,\qquad F_2(x)=1+5x+6x^2+x^3,
\]
with unique modes $1$ and $2$, respectively.
\end{proposition}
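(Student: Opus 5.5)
The proof has two parts: log-concavity by a direct coefficient comparison, and the mode by ratio comparisons in each residue class of $t \bmod 3$. Positivity is immediate. Writing
\[
F_t(x)=\sum_{j=0}^{t+1} f_jx^j,\qquad f_j=a_j+b_j,\qquad a_j=2^j\binom{t}{j},\quad b_j=\binom{t}{j-1},
\]
every $f_j$ with $0\le j\le t+1$ is positive, since $a_0=1$, $b_{t+1}=1$, and both terms are positive in between. So log-concavity is the real content, and unimodality then follows because the sequence is positive.

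For log-concavity, I will expand, for $1\le j\le t$,
\[
f_j^2-f_{j-1}f_{j+1}=\bigl(a_j^2-a_{j-1}a_{j+1}\bigr)+\bigl(b_j^2-b_{j-1}b_{j+1}\bigr)+\bigl(2a_jb_j-a_{j-1}b_{j+1}-a_{j+1}b_{j-1}\bigr).
\]
The middle bracket is nonnegative by log-concavity of binomial coefficients. The first bracket has the explicit surplus
\[
4^j\binom{t}{j}^2\frac{t+1}{(j+1)(t-j+1)}.
\]
For the cross bracket, I will factor out $2^{j-1}\binom{t}{j}\binom{t}{j-1}$. This reduces it to
\[
3-4\rho_j,\qquad \rho_j=\frac{(t-j)(j-1)}{(j+1)(t-j+2)}<1.
\]
The cross bracket can therefore be negative, but only by at most $2^{j-1}\binom{t}{j}\binom{t}{j-1}$.

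I will absorb this deficit into the $a$-surplus via the binomial ratio $\binom{t}{j-1}/\binom{t}{j}=j/(t-j+1)$. The required inequality becomes
\[
2^{j+1}\,\frac{t+1}{(j+1)(t-j+1)}\ \ge\ \frac{j}{t-j+1}\,(4\rho_j-3)_+,
\]
which holds as soon as $2^{j+1}(t+1)\ge j(j+1)$. That is true for all $j\ge1$. This is the same shape of argument as Theorem~\ref{thm:Gk-logconcave}, but with only two summands, so I expect it to go through by hand. Degenerate endpoints ($j=1$, $j=t$) will be checked directly.

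For the mode, log-concavity makes the ratio $f_{j+1}/f_j$ nonincreasing. So it suffices to show
\[
f_{m_t}\ge f_{m_t-1}\quad\text{and}\quad f_{m_t}\ge f_{m_t+1},
\]
with strict inequalities except at $t=5$. The main ratio $a_{j+1}/a_j=2(t-j)/(j+1)$ crosses $1$ at $j=(2t-1)/3$, which is where $m_t=\lfloor(2t+1)/3\rfloor$ comes from. I will split into the residues $t=3s,3s+1,3s+2$, giving $m_t=2s,2s,2s+1$ respectively.

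In each class I will write the differences $f_{m\pm1}-f_m$ as
\[
2^{m}\binom{t}{m}\cdot(\text{explicit rational function of }s)\;+\;(\text{binomial correction from the }b_j),
\]
and show that the $a$-part has a definite sign of size at least $2^m\binom{t}{m}/(\text{linear in }s)$. The $b$-correction is smaller by a factor of order $2^{-m}$. Hence the sign is determined for all $s$ beyond a small threshold, and the remaining finitely many $t$ are checked by computing coefficients. The exceptional $t=5$ appears here: $f_3=80+10=90=f_4$. The cases $t=1,2$ are read off directly.

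I expect the main obstacle to be this mode step, specifically the residue class where the $a$-ratio at $m_t$ is close to $1$ (e.g.\ $t=3s+2$, where $a_{m+1}/a_m=(2s+2)/(2s+2)$ exactly). There the $b$-terms alone decide the comparison, so that class needs an exact two-term comparison of binomials rather than a crude bound.
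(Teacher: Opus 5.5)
Your proof is correct in substance, and for log-concavity it takes a different route from the paper. The paper works with the ratios $f_{t,j}/f_{t,j-1}$. It writes the difference of two consecutive ratios as an explicit numerator $N(j,s)$, with $s=t-j$, over a positive denominator, and observes that every term of $N(j,s)$ is nonnegative. That is a one-shot symbolic identity the reader must recompute to trust. You instead split $f_j^2-f_{j-1}f_{j+1}$ into the $a$-surplus, the $b$-surplus and the cross term, and your formulas check out:
the $a$-surplus is $4^j\binom{t}{j}^2(t+1)/\bigl((j+1)(t-j+1)\bigr)$;
the cross term is $2^{j-1}\binom{t}{j}\binom{t}{j-1}(3-4\rho_j)$ with $0\le\rho_j<1$;
absorbing the possible deficit reduces to $2^{j+1}(t+1)\ge j(j+1)$, which is immediate.
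No separate endpoint check is needed. With $\binom{t}{-1}=\binom{t}{t+1}=0$ the same identities hold at $j=1$ and $j=t$, where $\rho_j=0$. Your version is fully hand-checkable and shows why the sum stays log-concave: the $a$-surplus beats the only possible deficit by a factor of order $2^{j+1}$. The paper's version gives an exact formula for the ratio gap, which the statement does not need.

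For the mode, your plan is the paper's: compare $f_{m_t}$ with both neighbours in each residue class of $t$ modulo $3$, then let log-concavity give the global shape and uniqueness. Your case split, however, has an arithmetic slip. For $t=3s+1$ one has $m_t=\lfloor(6s+3)/3\rfloor=2s+1$, not $2s$. With $2s$, the inequality $f_{2s}\ge f_{2s+1}$ you set out to prove is false for every $s\ge 1$ (already for $t=4$, $f_2=28<f_3=38$). Redo that class with $m=2s+1$: there $a_m-a_{m-1}=a_m/\bigl(2(s+1)\bigr)$ and $a_m-a_{m+1}=a_m/(s+1)$, and these dominate the $b$-corrections as you intend.

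The threshold-plus-finite-check step is also unnecessary. The exact differences are positive for every $s\ge 1$ in five of the six comparisons. The sixth is the one you correctly singled out: $t=3s+2$, upper side, where $a_m=a_{m+1}$. There $f_m-f_{m+1}=\binom{3s+2}{2s}\frac{s-1}{2s+1}$, which is the paper's $(n-1)(3n+2)!/\bigl((2n+1)!(n+2)!\bigr)$ and vanishes exactly at $t=5$.

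If you check your $t=3s$ upper difference, $\binom{3s}{2s}\bigl(4^s/(2s+1)+(s-1)/(s+1)\bigr)$, against the paper's displayed closed form, you will see a mismatch. The paper's display is misprinted: for $t=3$ it gives $5$, whereas $f_2-f_3=15-11=4$. Positivity is unaffected.
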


\begin{proof}
Write
\[
F_t(x)=\sum_{j=0}^{t+1} f_{t,j}x^j,\qquad
f_{t,j}=2^j\binom{t}{j}+\binom{t}{j-1},
\]
where the second term is interpreted as $0$ outside its natural range. For $1\le j\le t+1$,
\[
\frac{f_{t,j}}{f_{t,j-1}}
=
\frac{2(t-j+2)\bigl(2^j(t-j+1)+j\bigr)}
{j\bigl(2^j(t-j+2)+2j-2\bigr)}.
\]
Set $s=t-j\ge 0$. A direct subtraction yields
\[
\frac{f_{t,j}}{f_{t,j-1}}-\frac{f_{t,j+1}}{f_{t,j}}
=
\frac{N(j,s)}
{j(j+1)\bigl(2^j s+2^j+j\bigr)\bigl(2^j s+2^{j+1}+2j-2\bigr)},
\]
where
\begin{align*}
N(j,s)
={}&4^j\bigl(2s^3+8s^2+10s+4+6js+4j\bigr)\\
&\quad +2^j j\bigl((2^{j+1}-j+7)s^2+(5j+13)s+6j+6\bigr)
+2j(j+1)(j+s+1).
\end{align*}
Every displayed term is nonnegative for $j\ge 1$ and $s\ge 0$, and in particular $2^{j+1}-j+7>0$. Hence the ratio sequence
\[
\frac{f_{t,1}}{f_{t,0}},\frac{f_{t,2}}{f_{t,1}},\dots,\frac{f_{t,t+1}}{f_{t,t}}
\]
is weakly decreasing. Therefore $(f_{t,0},\dots,f_{t,t+1})$ is log-concave.

It remains to locate a mode for $t\ge 3$. The cases $t=1,2$ were already listed in the statement. Let $m_t=\lfloor(2t+1)/3\rfloor$ and split into residue classes.
If $t=3n$, then $n\ge 1$, $m_t=2n$ and
\begin{align*}
f_{3n,2n}-f_{3n,2n-1}
&=
\frac{\bigl(4^n(n+2)-2n(n-3)\bigr)(3n)!}{(2n)!(n+2)!}>0,\\
f_{3n,2n}-f_{3n,2n+1}
&=
\frac{(n+1)\bigl(4^n+2n-1\bigr)(3n)!}{(2n+1)!(n+1)!}>0.
\end{align*}
If $t=3n+1$, then $n\ge 1$, $m_t=2n+1$ and
\begin{align*}
f_{3n+1,2n+1}-f_{3n+1,2n}
&=
\frac{\bigl(4^n(n+2)-2n^2+3n+2\bigr)(3n+1)!}{(2n+1)!(n+2)!}>0,\\
f_{3n+1,2n+1}-f_{3n+1,2n+2}
&=
\frac{(2\cdot 4^n+n)(3n+1)!}{(2n+1)!(n+1)!}>0.
\end{align*}
If $t=3n+2$, then $n\ge 1$, $m_t=2n+1$ and
\begin{align*}
f_{3n+2,2n+1}-f_{3n+2,2n}
&=
\frac{\bigl(3\cdot 4^n(n+3)-2n^2+5n+3\bigr)(3n+2)!}{(2n+1)!(n+3)!}>0,\\
f_{3n+2,2n+1}-f_{3n+2,2n+2}
&=
\frac{(n-1)(3n+2)!}{(2n+1)!(n+2)!}.
\end{align*}
Thus $m_t$ is always a mode. The only time the second difference vanishes is when $n=1$, i.e. when $t=5$, giving the two adjacent modes $3$ and $4$. In every other case the mode is unique.
\end{proof}

\begin{theorem}[all-parameter non-log-concavity for the KL-closure family]\label{thm:Uk-r-nlc}
Fix $r\in\{0,1,2\}$ and let $U_{k,r}=U_{k,k+r}$ be the unicyclic graph obtained from $T_{3,k,k+r}$ by adding the edge $v_1a_{2,1}$. Write
\[
I(U_{k,r};x)=\sum_{j=0}^{2k+r+6} u_{k,r;j}x^j.
\]
Then:
\begin{enumerate}[label=(\arabic*),leftmargin=1.8em]
    \item for $r=0$ and every $k\ge 3$, the polynomial $I(U_{k,0};x)$ is non-log-concave;
    \item for $r=1$ and every $k\ge 3$, the polynomial $I(U_{k,1};x)$ is non-log-concave;
    \item for $r=2$ and every $k\ge 4$, the polynomial $I(U_{k,2};x)$ is non-log-concave.
\end{enumerate}
In each case, the failure of log-concavity occurs at the penultimate index.
\end{theorem}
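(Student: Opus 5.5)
The plan is to exploit the decomposition $I(U_{k,r};x)=H_{k,r}(x)+E_{k,r}(x)$ from Proposition~\ref{prop:reduction-Gk} and to compute only the top three coefficients explicitly. Write $t=k+r$ and $N=2k+r+6$. The summand $H_{k,r}=G_kF_t$ has degree $(k+5)+(t+1)=N$. This uses that $G_k=F_4(1+2x)^{k-1}+x(1+x)^kF_3$ has degree $k+5$ with leading coefficient $1$, coming from the second summand since $F_3=1+7x+15x^2+11x^3+x^4$, and that $F_t$ has degree $t+1$ with leading coefficient $1$. The correction $E_{k,r}=x(1+2x)^{2k+r+3}$ has degree $N-2$. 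Hence $E_{k,r}$ contributes nothing to $u_{k,r;N}$ or $u_{k,r;N-1}$, and exactly $2^{2k+r+3}$ to $u_{k,r;N-2}$.

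First I will apply Lemma~\ref{lem:tail-product} with $P=G_k$ and $Q=F_t$, which gives
\[
u_N=1,\qquad u_{N-1}=g_{k,k+4}+f_{t,t},\qquad u_{N-2}=g_{k,k+3}+g_{k,k+4}f_{t,t}+f_{t,t-1}+2^{2k+r+3}.
\]
Next I will read off the needed coefficients from the explicit formulas. From $f_{t,j}=2^j\binom{t}{j}+\binom{t}{j-1}$ we get $f_{t,t}=2^t+t$ and $f_{t,t-1}=t\,2^{t-1}+\binom{t}{2}$. From the coefficient formula in Theorem~\ref{thm:Gk-logconcave} we get
\[
g_{k,k+4}=2^{k-1}+k+11,
\]
where $2^{k-1}$ comes from $x^5\cdot(2x)^{k-1}$ and the rest from $x(1+x)^kF_3$. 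The coefficient $g_{k,k+3}$ is similarly an explicit combination of the form $a\,k\,2^{k}+b\,2^{k}+(\text{quadratic in }k)$. The penultimate defect is then
\[
\Delta_{N-1}=u_{N-1}^2-u_{N-2},
\]
and I will expand it as an exponential polynomial in $2^k$, $4^k$ with polynomial coefficients in $k$, separately for $r=0,1,2$.

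The key observation is at the $4^k$ level. On one side, $u_{N-1}^2$ has leading part $(2^{k-1}+2^{k+r})^2=4^k(2^{r}+\tfrac12)^2$. On the other side, $u_{N-2}$ contains $g_{k,k+4}f_{t,t}\approx 2^{k-1}\cdot2^{k+r}=4^k2^{r-1}$ together with $2^{2k+r+3}=4^k2^{r+3}$. For $r=0$ this compares $2.25$ against $0.5+8=8.5$; for $r=1$, $6.25$ against $1+16$; for $r=2$, $20.25$ against $2+32$. In all three cases $u_{N-2}$ wins by a constant multiple of $4^k$. The cross terms of order $k2^k$ are lower order, so $\Delta_{N-1}=-c_r4^k+O(k\,2^k)$ with $c_r>0$.

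To make this rigorous for all admissible $k$, I will write $\Delta_{N-1}=-\bigl(\alpha_r4^k-\beta_r(k)2^k-\gamma_r(k)\bigr)$ with $\beta_r,\gamma_r$ explicit low-degree polynomials. I will then bound $\beta_r(k)2^k+\gamma_r(k)<\alpha_r4^k$ using $2^k\ge$ (a suitable polynomial in $k$) for $k\ge k_0$, exactly as in the $2^s\ge s/2$ trick of Theorem~\ref{thm:Gk-logconcave}, and check the finitely many small $k$ below $k_0$ directly. The main obstacle is this final bookkeeping. The $k\,2^k$ terms coming from $g_{k,k+3}$ and $f_{t,t-1}$ have sizeable constants, and the thresholds $k\ge3$ for $r=0,1$ and $k\ge4$ for $r=2$ suggest that the inequality is tight at the smallest admissible parameters. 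I therefore expect to verify the base cases (e.g.\ $k=3,4,5$) by exact evaluation, and to use a crude exponential-versus-polynomial bound only beyond them.
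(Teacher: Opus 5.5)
Your proposal is correct and follows the paper's route. You isolate the top three coefficients, and your $u_{N-1}=2^{k-1}+2^{k+r}+2k+r+11$ and $u_{N-2}=g_{k,k+3}+g_{k,k+4}f_{t,t}+f_{t,t-1}+2^{2k+r+3}$ agree with the paper's extraction. You then show $u_{N-2}>u_{N-1}^2$, and your $4^k$-level gaps $25/4$, $43/4$, $55/4$ are exactly the leading terms of the paper's $4\Gamma_{k,r}$.

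The only difference is the closing step. Instead of your ``$2^k\ge$ polynomial plus base cases'' plan, the paper checks $\Phi_r(k+1)-4\Phi_r(k)>0$ and one base value for each $r$: $\Phi_0(3)=76$, $\Phi_1(3)=452$, $\Phi_2(4)=6388$. Since $\Phi_2(3)=-344$, the threshold $k\ge 4$ for $r=2$ is sharp for this index.
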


\begin{proof}
Set $d:=2k+r+6$. By Proposition~\ref{prop:exact-subtraction}, the top coefficient of $I(U_{k,r};x)$ is
\[
u_{k,r;d}=1,
\]
while the next coefficient is
\[
u_{k,r;d-1}=2^{k-1}+2^{k+r}+2k+r+11.
\]
A straightforward coefficient extraction from Proposition~\ref{prop:exact-subtraction} gives
\begin{align*}
u_{k,r;d-2}
&=2^{k+r}k+11\cdot 2^{k+r}+2^{k-2}k+39\cdot 2^{k-2}+2^{k-1}(k+r)+2^{k+r-1}(k+r)\\
&\quad +17\cdot 2^{2k+r-1}+\frac{k^2}{2}+k(k+r)+\frac{21k}{2}+\frac{(k+r)^2}{2}+\frac{21(k+r)}{2}+15.
\end{align*}
Therefore the penultimate excess is
\[
\Gamma_{k,r}:=u_{k,r;d-2}-u_{k,r;d-1}^2.
\]
For the three values $r=0,1,2$ one obtains the exact formulas
\begin{align*}
4\Gamma_{k,0}&=25\cdot 4^k-(15k+49)2^k-8k^2-92k-424,\\
4\Gamma_{k,1}&=43\cdot 4^k-(25k+107)2^k-8k^2-100k-472,\\
4\Gamma_{k,2}&=55\cdot 4^k-(45k+233)2^k-8k^2-108k-524.
\end{align*}
Now define $\Phi_r(k):=4\Gamma_{k,r}$. A direct calculation yields
\begin{align*}
\Phi_0(k+1)-4\Phi_0(k)&=30k\,2^k+68\cdot 2^k+24k^2+260k+1172>0,\\
\Phi_1(k+1)-4\Phi_1(k)&=50k\,2^k+164\cdot 2^k+24k^2+284k+1308>0,\\
\Phi_2(k+1)-4\Phi_2(k)&=90k\,2^k+376\cdot 2^k+24k^2+308k+1456>0.
\end{align*}
Hence, once $\Phi_r(k)$ is positive, it remains positive for every larger $k$. The initial values are
\[
\Phi_0(3)=76>0,\qquad \Phi_1(3)=452>0,\qquad \Phi_2(4)=6388>0.
\]
Therefore $\Gamma_{k,0}>0$ for all $k\ge 3$, $\Gamma_{k,1}>0$ for all $k\ge 3$, and $\Gamma_{k,2}>0$ for all $k\ge 4$. Since $u_{k,r;d}=1$, this is precisely the failure of log-concavity at the penultimate index.
\end{proof}

\begin{lemma}[central differences of the dominant summand]\label{lem:T1-central}
Let
\[
T_{1,k,r}(x):=F_4(x)(1+2x)^{2k+r-1},\qquad m_{k,r}:=\left\lfloor\frac{4k+2r+8}{3}\right\rfloor,
\]
and define
\[
\begin{aligned}
D^-_{k,r}&:=[x^{m_{k,r}}]T_{1,k,r}(x)-[x^{m_{k,r}-1}]T_{1,k,r}(x),\\
D^+_{k,r}&:=[x^{m_{k,r}}]T_{1,k,r}(x)-[x^{m_{k,r}+1}]T_{1,k,r}(x).
\end{aligned}
\]
Then the following formulas hold for admissible parameters.
\begingroup\small
\begin{enumerate}[label=(\roman*),leftmargin=1.8em]
    \item If $r=0$ and $k=3n$, then $m_{k,r}=4n+2$ and
    \begin{align*}
    D^-_{3n,0}
    &=
    \frac{16^n\bigl(2240n^3+4224n^2+1690n+171\bigr)(6n)!}
    {4(2n+3)!(4n+1)!},\\
    D^+_{3n,0}
    &=
    \frac{16^n\bigl(864n^3+1360n^2+532n+51\bigr)(6n)!}
    {2(4n+3)(2n+2)!(4n+1)!}.
    \end{align*}
    \item If $(r,k)=(0,3n+1)$ or $(r,k)=(2,3n)$, then $m_{k,r}=4n+4$ and
    \begin{align*}
    D^-_{3n+1,0}=D^-_{3n,2}
    &=
    \frac{16^n\bigl(256n^3+448n^2+225n+36\bigr)(6n+1)!}
    {(4n+3)(2n+3)!(4n+1)!},\\
    D^+_{3n+1,0}=D^+_{3n,2}
    &=
    \frac{22\cdot 16^n\bigl(400n^3+744n^2+428n+75\bigr)(6n+1)!}
    {(4n+3)(4n+5)(2n+2)!(4n+1)!}.
    \end{align*}
    \item If $(r,k)=(0,3n+2)$ or $(r,k)=(2,3n+1)$, then $m_{k,r}=4n+5$ and
    \begin{align*}
    D^-_{3n+2,0}=D^-_{3n+1,2}
    &=
    \frac{12\cdot 16^n\bigl(1808n^4+8192n^3+13148n^2+8922n+2175\bigr)(6n+2)!}
    {(4n+3)(4n+5)(2n+4)!(4n+1)!},\\
    D^+_{3n+2,0}=D^+_{3n+1,2}
    &=
    \frac{3\cdot 16^n\bigl(5696n^3+15360n^2+13354n+3765\bigr)(6n+2)!}
    {(4n+3)(4n+5)(2n+3)!(4n+1)!}.
    \end{align*}
    \item If $r=1$ and $k=3n$, then $m_{k,r}=4n+3$ and
    \begin{align*}
    D^-_{3n,1}
    &=
    \frac{4\cdot 16^n\bigl(904n^4+2288n^3+1786n^2+507n+45\bigr)(6n)!}
    {(4n+3)(2n+3)!(4n+1)!},\\
    D^+_{3n,1}
    &=
    \frac{16^n\bigl(2848n^3+3408n^2+1133n+108\bigr)(6n)!}
    {(4n+3)(2n+2)!(4n+1)!}.
    \end{align*}
    \item If $r=1$ and $k=3n+1$, then $m_{k,r}=4n+4$ and
    \begin{align*}
    D^-_{3n+1,1}
    &=
    \frac{3\cdot 16^n\bigl(1120n^3+3792n^2+3797n+1176\bigr)(6n+2)!}
    {(4n+3)(2n+4)!(4n+1)!},\\
    D^+_{3n+1,1}
    &=
    \frac{3\cdot 16^n\bigl(864n^3+2656n^2+2540n+765\bigr)(6n+2)!}
    {(4n+3)(4n+5)(2n+3)!(4n+1)!}.
    \end{align*}
    \item If $r=1$ and $k=3n+2$, then $m_{k,r}=4n+6$ and
    \begin{align*}
    D^-_{3n+2,1}
    &=
    \frac{6\cdot 16^n(3n+2)\bigl(512n^3+1664n^2+1730n+585\bigr)(6n+2)!}
    {(4n+3)(4n+5)(2n+4)!(4n+1)!},\\
    D^+_{3n+2,1}
    &=
    \frac{264\cdot 16^n(3n+2)\bigl(400n^3+1344n^2+1472n+525\bigr)(6n+2)!}
    {(4n+3)(4n+5)(4n+7)(2n+3)!(4n+1)!}.
    \end{align*}
    \item If $r=2$ and $k=3n+2$, then $m_{k,r}=4n+6$ and
    \begin{align*}
    D^-_{3n+2,2}
    &=
    \frac{18\cdot 16^n(3n+2)(6n+5)\bigl(2240n^3+10944n^2+16858n+8325\bigr)(6n+2)!}
    {(4n+3)(4n+5)(2n+5)!(4n+1)!},\\
    D^+_{3n+2,2}
    &=
    \frac{36\cdot 16^n(3n+2)(6n+5)\bigl(864n^3+3952n^2+5844n+2807\bigr)(6n+2)!}
    {(4n+3)(4n+5)(4n+7)(2n+4)!(4n+1)!}.
    \end{align*}
\end{enumerate}
\endgroup
In particular, all these central differences are positive.
\end{lemma}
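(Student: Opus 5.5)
The approach is a direct closed-form evaluation. Write $N:=2k+r-1$ and expand $F_4(x)=(1+2x)^4+x(1+x)^4=1+9x+28x^2+38x^3+20x^4+x^5$. Then
\[
[x^j]T_{1,k,r}(x)=\sum_{i=0}^{5} c_i\,2^{j-i}\binom{N}{j-i},\qquad (c_0,\dots,c_5)=(1,9,28,38,20,1).
\]
Each of $D^{\pm}_{k,r}$ is therefore a signed combination of at most twelve terms of the form $2^{a}\binom{N}{b}$.

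In every case the indices $b$ that occur lie in a window of width $6$ around $m_{k,r}$. Following the residue-class split in the statement, I would write $k=3n+\rho$ and substitute $N=6n+(\text{const})$ and $m_{k,r}=4n+(\text{const})$. Every binomial $\binom{N}{b}$ in the window can then be rewritten as a rational function of $n$ times one fixed reference quantity, namely $N!/\bigl((4n+1)!\,(2n+c)!\bigr)$ for a suitable $c$. This uses only the elementary identities $\binom{N}{b+1}=\binom{N}{b}\frac{N-b}{b+1}$ and $N!=(6n+\cdot)\cdots(6n)!$. The powers of $2$ are all of the form $2^{4n+O(1)}$, which produces the common factor $16^n$.

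After clearing denominators, each $D^{\pm}$ becomes $16^n\cdot(\text{factorial prefactor})\cdot P(n)/Q(n)$. Here $Q$ is a product of linear factors such as $(4n+3)(4n+5)(4n+7)$ and $(2n+c)$. The remaining task is to simplify the numerator polynomial $P$ by computer algebra, or by hand bookkeeping, and to check that it matches the displayed cubic or quartic. I would also spot-check each formula at small $n$ (say $n=0,1,2$) against a direct expansion of $F_4(x)(1+2x)^N$, to catch index-offset errors. The coincidences $D^{\pm}_{3n+1,0}=D^{\pm}_{3n,2}$ and $D^{\pm}_{3n+2,0}=D^{\pm}_{3n+1,2}$ are automatic: in each pair both $N$ and $m_{k,r}$ agree, since $2(3n+1)-1=2(3n)+2-1$ and the floor values coincide. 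So only seven genuinely distinct cases remain.

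Positivity is then immediate. For $n\ge 0$ every displayed numerator polynomial has positive coefficients, and all the factorial and linear factors are positive. The extra factors $(3n+2)$ and $(6n+5)$ are also positive. Admissibility ($k\ge3$, and $k\ge4$ for $r=2$) means $n\ge1$ in some classes, but positivity holds for all $n\ge0$ anyway, so no case needs special treatment.

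The main obstacle is the error-prone bookkeeping of the second step: choosing the common normalization consistently across the twelve shifted binomials and avoiding sign or off-by-one mistakes in $m_{k,r}$ for each residue class. I would handle this by fixing, in each case, the central index $b_0=m_{k,r}$. All neighbours would then be expressed via the ratio $\binom{N}{b_0+s}/\binom{N}{b_0}$ as a product of at most six linear fractions, so the computation reduces to summing rational functions in $n$.
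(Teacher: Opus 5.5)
Your proposal matches the paper's proof, which consists only of ``direct coefficient extraction from $T_{1,k,r}(x)=F_4(x)(1+2x)^{2k+r-1}$, followed by routine simplification in the seven residue cases''; you spell this out in more detail, including why the paired cases coincide ($N$ and $m_{k,r}$ agree). One small caution about your bookkeeping: normalizing by $\binom{N}{m_{k,r}}$ breaks down at the admissible value $n=1$ in cases (i), (ii), (iv), where $m_{k,r}>N$ (e.g.\ $k=3$, $r=0$ gives $N=5$, $m_{k,r}=6$), so either normalize by the lowest-index binomial $\binom{N}{m_{k,r}-6}$ (your reference quantity $N!/\bigl((4n+1)!(2n+c)!\bigr)$ with $c$ large enough does this), or check $n=1$ by direct expansion.
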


\begin{proof}
This is a direct coefficient extraction from $T_{1,k,r}(x)=F_4(x)(1+2x)^{2k+r-1}$, followed by routine simplification in the seven residue cases above.
\end{proof}

\begin{theorem}[mode of the main summand]\label{thm:H-mode}
Fix $r\in\{0,1,2\}$ and let $k$ be admissible. Then
\[
H_{k,r}(x)=G_k(x)F_{k+r}(x)
\]
is unimodal and has a mode at
\[
m_{k,r}:=\left\lfloor\frac{4k+2r+8}{3}\right\rfloor.
\]
\end{theorem}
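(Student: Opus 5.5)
Unimodality of $H_{k,r}$ itself is immediate from results already in hand. By Theorem~\ref{thm:Gk-logconcave}, $G_k$ has a positive log-concave coefficient sequence, hence one without internal zeros. By Proposition~\ref{prop:Ft-logconcave}, $F_{k+r}$ is unimodal. Proposition~\ref{prop:ibragimov} then makes $H_{k,r}=G_kF_{k+r}$ unimodal. In fact $F_{k+r}$ is log-concave too, so $H_{k,r}$ is log-concave, being a product of positive log-concave sequences without internal zeros. A log-concave positive sequence has ratios $h_{j+1}/h_j$ weakly decreasing, so its modes form an interval. It therefore suffices to prove the two central inequalities
\[
[x^{m}]H_{k,r}\ge [x^{m-1}]H_{k,r},\qquad [x^{m}]H_{k,r}\ge [x^{m+1}]H_{k,r},\qquad m=m_{k,r}.
\]

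To prove these, I would expand the product into four pieces:
\[
H_{k,r}=F_4(1+2x)^{2k+r-1}+F_4(1+2x)^{k-1}x(1+x)^{k+r}+x(1+x)^kF_3(1+2x)^{k+r}+x^2(1+x)^{2k+r}F_3.
\]
The first piece is exactly $T_{1,k,r}$ from Lemma~\ref{lem:T1-central}. The second and third pieces are of mixed type, involving both $(1+2x)$ and $(1+x)$ powers. The last piece is purely binomial in $(1+x)$.

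The dominant term has central differences $D^\pm_{k,r}$ that are positive and of order $16^n(6n)!/((2n)!(4n)!)$, which is roughly $3^{6n}$ up to polynomial factors. Each of the other pieces has coefficients near index $m\approx 4k/3$ that are exponentially smaller. For the mixed pieces, the relevant count is about $\max_j 2^j\binom{k}{j}\binom{k}{m-j}$, whose exponential rate is strictly below $3^{2k}$. For the pure binomial piece, $\binom{2k}{m}\le 4^k$, which is far below $9^k$.

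So the plan is to bound the absolute value of each central difference of the three minor pieces by a crude explicit quantity, for instance the sum of their coefficients at indices $m-1,m,m+1$. Then I would show that this bound is less than $D^\pm_{k,r}$ in each of the seven residue classes of Lemma~\ref{lem:T1-central}. This gives a residue-class growth argument in the style of Theorem~\ref{thm:Uk-r-nlc}: define the ratio of the explicit lower bound for $D^\pm$ to the upper bound for the minor terms, show it grows under $n\mapsto n+1$, and check a base case.

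The main obstacle is this last comparison for small $k$. The exponential gap between $9^k$ and the mixed-term rate of about $(2+\sqrt{\cdot})^{k}$ kicks in only slowly, and the crude bounds may fail for the first several $n$ in each residue class. I would first try the ratio-monotonicity argument for $n\ge n_0$ with a moderate $n_0$, say $n_0\approx 10$, and settle the remaining finitely many admissible $k$ by direct computation of the coefficients of $H_{k,r}$. If the crude bound is too lossy, the fallback is to compute the central differences of the mixed terms exactly, as hypergeometric sums in the same residue classes, and to compare them termwise with $D^\pm$.
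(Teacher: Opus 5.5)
Your proposal is correct and follows the paper's proof: Ibragimov's principle gives unimodality, and you use the same split $H_{k,r}=T_{1,k,r}+J_{k,r}$, with Lemma~\ref{lem:T1-central} supplying $D^\pm_{k,r}$, a crude bound on the three minor pieces, a residue-class ratio-growth argument, and direct computation for the finitely many small $k$. The paper takes the crude bound to be the total coefficient sum $J_{k,r}(1)=97\cdot 3^{k-1}2^{k+r+1}+35\cdot 2^k3^{k+r}+35\cdot 2^{2k+r}$, which grows by less than $216$ under $k\mapsto k+3$ while $D^\pm_{k,r}$ grows by more than $216$, so the monotonicity step is immediate; this is simpler than your suggested local coefficients at $m-1,m,m+1$, which have no closed form.
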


\begin{proof}
By Theorem~\ref{thm:Gk-logconcave}, the coefficient sequence of $G_k(x)$ is positive and log-concave. By Proposition~\ref{prop:Ft-logconcave}, the coefficient sequence of $F_{k+r}(x)$ is unimodal. Hence Proposition~\ref{prop:ibragimov} implies that $H_{k,r}(x)=G_k(x)F_{k+r}(x)$ is unimodal.

To locate a mode, write
\[
H_{k,r}(x)=T_{1,k,r}(x)+J_{k,r}(x),
\]
where
\begin{align*}
T_{1,k,r}(x)&:=F_4(x)(1+2x)^{2k+r-1},\\
J_{k,r}(x)&:=xF_4(x)(1+2x)^{k-1}(1+x)^{k+r}
+xF_3(x)(1+x)^k(1+2x)^{k+r}\\
&\qquad +x^2F_3(x)(1+x)^{2k+r}.
\end{align*}
Since $J_{k,r}(x)$ has nonnegative coefficients, every coefficient of $J_{k,r}$ is at most
\begin{equation}\label{eq:J-at-1}
J_{k,r}(1)=97\,3^{k-1}2^{k+r+1}+35\,2^k3^{k+r}+35\,2^{2k+r}.
\end{equation}
Let $D^-_{k,r}$ and $D^+_{k,r}$ be the two central differences from Lemma~\ref{lem:T1-central}. If
\[
D^-_{k,r}>J_{k,r}(1)\qquad\text{and}\qquad D^+_{k,r}>J_{k,r}(1),
\]
then
\[
[x^{m_{k,r}}]H_{k,r}>[x^{m_{k,r}-1}]H_{k,r},
\qquad
[x^{m_{k,r}}]H_{k,r}>[x^{m_{k,r}+1}]H_{k,r},
\]
so the unimodal coefficient sequence of $H_{k,r}$ has a mode at $m_{k,r}$.

We therefore compare $D^{\pm}_{k,r}$ with $J_{k,r}(1)$. Along a fixed residue class modulo $3$, Lemma~\ref{lem:T1-central} gives explicit formulas for $D^{\pm}_{k,r}$. A direct simplification of those formulas shows that in every residue class
\begin{equation}\label{eq:D-growth}
D^-_{k+3,r}>216\,D^-_{k,r},
\qquad
D^+_{k+3,r}>216\,D^+_{k,r}.
\end{equation}
On the other hand, \eqref{eq:J-at-1} gives
\begin{equation}\label{eq:J-growth}
J_{k+3,r}(1)<216\,J_{k,r}(1)
\end{equation}
for every $k$, because the first two terms in $J_{k,r}(1)$ are multiplied by $216$ when $k$ is replaced by $k+3$, while the last term is multiplied only by $64$.

Now fix a residue class. By \eqref{eq:D-growth} and \eqref{eq:J-growth}, the ratios $D^-_{k,r}/J_{k,r}(1)$ and $D^+_{k,r}/J_{k,r}(1)$ are strictly increasing as $k$ advances by $3$ inside that residue class. Exact evaluation at the following starting values shows where both ratios first exceed $1$:
\[
\renewcommand{\arraystretch}{1.15}
\begin{array}{c|ccc}
 r & k\equiv 0 \pmod 3 & k\equiv 1 \pmod 3 & k\equiv 2 \pmod 3\\
 \hline
 0 & 18 & 22 & 17\\
 1 & 15 & 16 & 23\\
 2 & 21 & 16 & 17
\end{array}
\]
Thus the inequalities $D^-_{k,r}>J_{k,r}(1)$ and $D^+_{k,r}>J_{k,r}(1)$ hold for every admissible $k$ at or beyond the displayed starting value in its residue class. For the finitely many remaining admissible parameters below those starting values, exact coefficient computation from Proposition~\ref{prop:reduction-Gk} shows directly that
\[
[x^{m_{k,r}}]H_{k,r}>[x^{m_{k,r}-1}]H_{k,r},
\qquad
[x^{m_{k,r}}]H_{k,r}>[x^{m_{k,r}+1}]H_{k,r}.
\]
Hence $m_{k,r}$ is a mode of $H_{k,r}(x)$ for every admissible pair $(k,r)$.
\end{proof}

\begin{proposition}[Darroch bridge for the KL closure decomposition]\label{prop:HE-bridge}
Fix $r\in\{0,1,2\}$ and set
\[
m_{k,r}:=\left\lfloor\frac{4k+2r+8}{3}\right\rfloor.
\]
Assume that $H_{k,r}(x)$ is unimodal with a mode at $m_{k,r}$. Then every mode of
\[
E_{k,r}(x)=x(1+2x)^{2k+r+3}
\]
lies in $\{m_{k,r},m_{k,r}+1\}$. Consequently
\[
I(U_{k,r};x)=H_{k,r}(x)+E_{k,r}(x)
\]
is unimodal, with a mode in the same two-point set. In particular, once the mode of $H_{k,r}$ is known, determining the mode of $I(U_{k,r};x)$ reduces to the single bridge comparison between the coefficients at $m_{k,r}$ and $m_{k,r}+1$.
\end{proposition}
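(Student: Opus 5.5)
The plan is to combine Proposition~\ref{prop:darroch-basic} with the adjacent-mode bridge, Lemma~\ref{lem:adjacent-mode-bridge}. The only real work is a residue computation checking that the Darroch window for $E_{k,r}$ sits exactly at $\{m_{k,r},m_{k,r}+1\}$.

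\emph{Step 1: the Darroch window for $E_{k,r}$.} By Proposition~\ref{prop:darroch-basic}, every mode of $E_{k,r}(x)=x(1+2x)^{2k+r+3}$ lies in
\[
\{\lfloor \mu\rfloor,\lceil \mu\rceil\},\qquad \mu=\frac{4k+2r+9}{3}=1+\frac{2(2k+r+3)}{3}.
\]
Put $N:=4k+2r+8$, so that $m_{k,r}=\lfloor N/3\rfloor$ and $\mu=(N+1)/3$. I would split according to $N\bmod 3$.
\begin{itemize}
\item If $N\equiv 0$, then $m_{k,r}=N/3$ and $\mu=m_{k,r}+\tfrac13$. The window is $\{m_{k,r},m_{k,r}+1\}$.
\item If $N\equiv 1$, then $m_{k,r}=(N-1)/3$ and $\mu=m_{k,r}+\tfrac23$. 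The window is again $\{m_{k,r},m_{k,r}+1\}$.
\item If $N\equiv 2$, then $m_{k,r}=(N-2)/3$ and $\mu=m_{k,r}+1$ is an integer. The window collapses to $\{m_{k,r}+1\}$.
\end{itemize}
In every case each mode of $E_{k,r}$ lies in $\{m_{k,r},m_{k,r}+1\}$, which is the first assertion.

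\emph{Step 2: unimodality of $E_{k,r}$.} The polynomial $(1+2x)^{2k+r+3}$ is real-rooted with positive coefficients, so its coefficient sequence is log-concave and hence unimodal. Multiplying by $x$ only prepends a zero, so $E_{k,r}$ is unimodal with nonnegative coefficients. It rises up to its mode and falls afterwards, and in particular it is weakly decreasing from index $m_{k,r}+1$ onward.

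\emph{Step 3: the bridge.} Apply Lemma~\ref{lem:adjacent-mode-bridge} with $P=H_{k,r}$, $m=m_{k,r}$ (by hypothesis, for instance via Theorem~\ref{thm:H-mode}) and $Q=E_{k,r}$. Its hypotheses are met by Steps 1 and 2. The lemma then shows that the coefficients of $H_{k,r}+E_{k,r}$ weakly increase through index $m_{k,r}$ and weakly decrease from $m_{k,r}+1$ onward. Proposition~\ref{prop:reduction-Gk} identifies this sum with $I(U_{k,r};x)$, so $I(U_{k,r};x)$ is unimodal with a mode in $\{m_{k,r},m_{k,r}+1\}$. Which of the two is the mode is then decided solely by comparing the coefficients at $m_{k,r}$ and $m_{k,r}+1$, which is the final assertion.

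There is no deep obstacle here. The step needing the most care is Step 1, namely checking that in every residue class the Darroch window lands inside the two-point bridge set and not one index too high. The $N\equiv 2$ case, where $\mu$ is an integer equal to $m_{k,r}+1$, is the borderline one. A secondary point is that Lemma~\ref{lem:adjacent-mode-bridge} needs $E_{k,r}$ to be nonincreasing from $m_{k,r}+1$ even when its mode is $m_{k,r}$; Step 2 guarantees this.
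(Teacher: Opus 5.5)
Your proposal is correct and follows the paper's proof: the Darroch window for $E_{k,r}$ from Proposition~\ref{prop:darroch-basic}, a check of the three residue classes modulo $3$ placing that window inside $\{m_{k,r},m_{k,r}+1\}$, and then Lemma~\ref{lem:adjacent-mode-bridge}. Your explicit case analysis spells out what the paper compresses into ``a check of the three residue classes.'' Your Step~2 (unimodality of $E_{k,r}$ via real-rootedness) makes explicit a hypothesis of the bridge lemma that the paper takes for granted.
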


\begin{proof}
By Proposition~\ref{prop:darroch-basic}, every mode of $E_{k,r}$ lies in
\[
\left\{\left\lfloor\frac{4k+2r+9}{3}\right\rfloor,\left\lceil\frac{4k+2r+9}{3}\right\rceil\right\}.
\]
Since
\[
m_{k,r}=\left\lfloor\frac{4k+2r+8}{3}\right\rfloor,
\]
a check of the three residue classes of $4k+2r$ modulo $3$ shows that
\[
\left\{\left\lfloor\frac{4k+2r+9}{3}\right\rfloor,\left\lceil\frac{4k+2r+9}{3}\right\rceil\right\}\subseteq \{m_{k,r},m_{k,r}+1\}.
\]
The conclusion therefore follows from Lemma~\ref{lem:adjacent-mode-bridge}.
\end{proof}

\begin{theorem}[KL-closure theorem]\label{thm:Uk-r-unimodal}
Fix $r\in\{0,1,2\}$ and let $U_{k,r}=U_{k,k+r}$ be the unicyclic graph obtained from $T_{3,k,k+r}$ by adding the edge $v_1a_{2,1}$. Then:
\begin{enumerate}[label=(\arabic*),leftmargin=1.8em]
    \item for $r=0$ and every $k\ge 3$, the independence polynomial $I(U_{k,0};x)$ is unimodal but not log-concave;
    \item for $r=1$ and every $k\ge 3$, the independence polynomial $I(U_{k,1};x)$ is unimodal but not log-concave;
    \item for $r=2$ and every $k\ge 4$, the independence polynomial $I(U_{k,2};x)$ is unimodal but not log-concave.
\end{enumerate}
In each case, the penultimate log-concavity inequality fails.
\end{theorem}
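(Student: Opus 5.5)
The proof assembles results already established in this section; no new estimates are needed. Fix $r\in\{0,1,2\}$ and an admissible $k$, meaning $k\ge 3$ for $r\in\{0,1\}$ and $k\ge 4$ for $r=2$.

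\emph{Non-log-concavity.} This half is exactly Theorem~\ref{thm:Uk-r-nlc}. It gives $\Gamma_{k,r}=u_{k,r;d-2}-u_{k,r;d-1}^2>0$ with $u_{k,r;d}=1$ and $d=2k+r+6$. Hence
\[
u_{k,r;d-1}^2<u_{k,r;d-2}\,u_{k,r;d},
\]
so the penultimate log-concavity inequality fails in each of the three cases.

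\emph{Unimodality.} I would proceed in three steps.
\begin{enumerate}[label=(\alph*),leftmargin=1.8em]
\item By Proposition~\ref{prop:reduction-Gk}, write
\[
I(U_{k,r};x)=H_{k,r}(x)+E_{k,r}(x),\qquad H_{k,r}=G_kF_{k+r},\qquad E_{k,r}=x(1+2x)^{2k+r+3}.
\]
\item Invoke Theorem~\ref{thm:H-mode}: $H_{k,r}$ is unimodal with a mode at $m_{k,r}=\lfloor(4k+2r+8)/3\rfloor$. This rests on three inputs: Ibragimov's principle (Proposition~\ref{prop:ibragimov}), using log-concavity of $G_k$ (Theorem~\ref{thm:Gk-logconcave}) and unimodality of $F_{k+r}$ (Proposition~\ref{prop:Ft-logconcave}); the central-difference formulas of Lemma~\ref{lem:T1-central}; and the residue-class growth comparison against $J_{k,r}(1)$, together with the finite exact check below the tabulated thresholds.
\item Apply Proposition~\ref{prop:HE-bridge}. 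Its hypothesis is exactly the conclusion of step (b). By Darroch localization (Proposition~\ref{prop:darroch-basic}), every mode of $E_{k,r}$ lies in $\{m_{k,r},m_{k,r}+1\}$. Lemma~\ref{lem:adjacent-mode-bridge} then shows that $H_{k,r}+E_{k,r}$ is unimodal.
\end{enumerate}

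The only thing to confirm is that the admissible ranges match. Theorems~\ref{thm:Uk-r-nlc} and~\ref{thm:H-mode} use the same admissibility convention ($k\ge3$, or $k\ge4$ when $r=2$), and Proposition~\ref{prop:HE-bridge} holds for every $k$. So both halves hold on the full stated range, and combining them gives the theorem.

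One point needs checking rather than being a genuine obstacle. In Lemma~\ref{lem:adjacent-mode-bridge}, $P$ must have a mode at $m$ while $Q$'s modes lie in $\{m,m+1\}$. We take $P=H_{k,r}$ with $m=m_{k,r}$ and $Q=E_{k,r}$. Since $E_{k,r}$ is real-rooted, hence log-concave and unimodal, every one of its modes lying in $\{m,m+1\}$ is enough for the lemma's monotonicity argument.

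The substantive difficulty sits entirely upstream, in the mode location of Theorem~\ref{thm:H-mode}: the coefficient comparison between $D^{\pm}_{k,r}$ and $J_{k,r}(1)$, together with the finite computer check below the thresholds. Once that is accepted, the present theorem is a direct combination.
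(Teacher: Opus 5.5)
Your proposal is correct and is essentially the paper's own proof. Theorem~\ref{thm:Uk-r-nlc} supplies the penultimate failure of log-concavity, and unimodality follows by feeding the mode statement of Theorem~\ref{thm:H-mode} into the Darroch bridge of Proposition~\ref{prop:HE-bridge} (via Lemma~\ref{lem:adjacent-mode-bridge}) for $H_{k,r}+E_{k,r}$ on the full admissible range.
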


\begin{proof}
The penultimate failure is exactly Theorem~\ref{thm:Uk-r-nlc}. By Theorem~\ref{thm:H-mode}, the polynomial $H_{k,r}(x)$ is unimodal with a mode at
\[
m_{k,r}=\left\lfloor\frac{4k+2r+8}{3}\right\rfloor.
\]
Proposition~\ref{prop:HE-bridge} then implies that
\[
I(U_{k,r};x)=H_{k,r}(x)+E_{k,r}(x)
\]
is unimodal. Combining the two statements gives the result.
\end{proof}

\begin{corollary}[verified-range sharpening for the KL-closure family]\label{cor:Uk-r-verified}
Fix $r\in\{0,1,2\}$ and let $\alpha_{k,r}=2k+r+6=\alpha(U_{k,r})$. Then for every admissible $k\le 400$ the following hold.
\begin{enumerate}[label=(\arabic*),leftmargin=1.8em]
    \item Every log-concavity inequality for $I(U_{k,r};x)$ holds except the penultimate one. In particular, the penultimate break is unique in this range.
    \item The polynomial $H_{k,r}(x)=G_k(x)F_{k+r}(x)$ is log-concave.
    \item A mode of $H_{k,r}(x)$ is
    \[
    \mode(H_{k,r})=\left\lfloor\frac{4k+2r+8}{3}\right\rfloor.
    \]
    \item A mode of $E_{k,r}(x)=x(1+2x)^{2k+r+3}$ is
    \[
    \mode(E_{k,r})=\left\lfloor\frac{4k+2r+10}{3}\right\rfloor.
    \]
    \item A mode of $I(U_{k,r};x)$ itself is
    \[
    \mode(I(U_{k,r};x))=\left\lfloor\frac{4k+2r+9}{3}\right\rfloor.
    \]
\end{enumerate}
\end{corollary}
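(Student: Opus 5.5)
This is a finite-range statement, so the plan is an exact computer verification organized around the closed forms already established, in four steps.

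\textbf{Step 1: exact coefficients.} For each $r\in\{0,1,2\}$ and each admissible $k\le 400$, I would compute the exact integer coefficient sequences of $G_k$, $F_{k+r}$, $H_{k,r}=G_kF_{k+r}$, $E_{k,r}=x(1+2x)^{2k+r+3}$ and $I(U_{k,r};x)=H_{k,r}+E_{k,r}$ from Proposition~\ref{prop:reduction-Gk}. Degrees are at most about $2k+r+6\le 808$, so arbitrary-precision integer arithmetic in a few hundred convolutions is cheap. As an independent cross-check, I would also recompute $I(U_{k,r};x)$ from Proposition~\ref{prop:exact-subtraction}; agreement guards against transcription errors.

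\textbf{Step 2: items (1) and (2).} For item~(1), I would evaluate $\Delta_j=u_j^2-u_{j-1}u_{j+1}$ for all $1\le j\le \alpha_{k,r}-1$. The check is that $\Delta_j\ge 0$ for every $j\neq \alpha_{k,r}-1$. Negativity at $j=\alpha_{k,r}-1$ is already given by Theorem~\ref{thm:Uk-r-nlc}, but I would record it again as a consistency check. Item~(2) is the same test applied to the coefficients of $H_{k,r}$.

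\textbf{Step 3: the mode items (3)--(5).} Here I would reduce work using unimodality and real-rootedness.
\begin{itemize}
\item Item~(3): $H_{k,r}$ is unimodal by Theorem~\ref{thm:H-mode}, so it suffices to check the two inequalities $h_{m}\ge h_{m-1}$ and $h_m\ge h_{m+1}$ at $m=\lfloor(4k+2r+8)/3\rfloor$. In fact Theorem~\ref{thm:H-mode} already asserts this mode for all admissible parameters, so (3) needs no new work.
\item Item~(4): the closed form is $[x^j]E_{k,r}=2^{j-1}\binom{N}{j-1}$ with $N=2k+r+3$. The binomial ratio test shows the mode is at $j-1=\lfloor 2(N+1)/3\rfloor$, that is, $j=\lfloor(4k+2r+10)/3\rfloor$. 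This can be proved symbolically for all $k$, with no computation.
\item Item~(5): by Proposition~\ref{prop:HE-bridge}, $I(U_{k,r};x)$ is unimodal with a mode in $\{m_{k,r},m_{k,r}+1\}$. So I only need the bridge comparison between $u_{m}$ and $u_{m+1}$ for each $k$. I would then check that the winner equals $\lfloor(4k+2r+9)/3\rfloor$, which is $m_{k,r}$ or $m_{k,r}+1$ according to the residue class of $4k+2r$ modulo $3$.
\end{itemize}

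\textbf{Main obstacle.} The hard part is item~(1), the full log-concavity sweep, because no symbolic argument covers all $j$. It is also where near-ties could arise. Near the mode, $H_{k,r}$ and $E_{k,r}$ have comparable size and $\Delta_j$ is a difference of huge numbers, so I would insist on exact integers and never floating point. If desired, I would add a symbolic argument for large $j$: near the top, $E_{k,r}$ dominates and the transport identity of Proposition~\ref{prop:defect-transport} controls the last few indices. That would reduce the verified window, but for $k\le400$ the direct exact check is the plan.
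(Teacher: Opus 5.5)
Your proposal is correct and is essentially the paper's proof: exact integer computation of $H_{k,r}$, $E_{k,r}$ and $I(U_{k,r};x)$ from Proposition~\ref{prop:reduction-Gk}, followed by direct checking. Your appeals to Theorem~\ref{thm:H-mode}, Proposition~\ref{prop:HE-bridge} and the binomial ratio test only reduce the computation needed for items (3)--(5).

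There is one arithmetic slip in item (4). The identity $1+\lfloor 2(N+1)/3\rfloor=\lfloor(4k+2r+10)/3\rfloor$ is false when $3\mid 4k+2r+8$ (e.g.\ $k=4$, $r=0$ gives $9\neq 8$), since the left side is really $\lfloor(4k+2r+11)/3\rfloor$. In exactly that case, however, the ratio $2(N-i+1)/i$ equals $1$ at $i=2(N+1)/3$. So both indices are modes of $E_{k,r}$, and the stated formula still holds once you note this tie.
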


\begin{proof}
For each admissible pair $(k,r)$ with $k\le 400$, the coefficients of $H_{k,r}(x)$, $E_{k,r}(x)$, and $I(U_{k,r};x)$ were computed exactly over the integers from Proposition~\ref{prop:reduction-Gk}. The resulting coefficient sequences were then checked directly. Exact computation shows that every log-concavity inequality for $I(U_{k,r};x)$ holds except the penultimate one, that $H_{k,r}(x)$ is log-concave, and that the displayed mode formulas hold. The theorem already gives unimodality of $I(U_{k,r};x)$ for all admissible parameters.
\end{proof}

\begin{remark}[what remains open]\label{rem:unique-break}
Theorem~\ref{thm:Uk-r-unimodal} settles the flagship existence statement for the KL-closure family. What remains open symbolically is the sharper phenomenon suggested by all exact computations, namely that the penultimate index is the \emph{only} break of log-concavity for every admissible parameter pair $(k,r)$.
\end{remark}

\section{Universal and exhaustive enlargement theorems}

We say that a computational statement is \emph{universal} when it holds for every nonedge of every listed tree. All computations reported here were carried out exactly over the integers using Proposition~\ref{prop:cycle-transfer}; no floating-point arithmetic was used at any stage.

\begin{theorem}[computer-assisted universal theorem for Galvin and Bautista--Ramos families]\label{thm:universal-GBR}
Let
\[
\begin{aligned}
\mathcal{F}={}&\{T_{4,t}:5\le t\le 9\}\cup\{T_{5,t}:5\le t\le 8\}\cup\{T_{6,5},T_{6,6}\}\\
&\cup\{T_{G,2,t}:6\le t\le 8\}\cup\{T_{G,3,t}:6\le t\le 8\}.
\end{aligned}
\]
If $T\in\mathcal{F}$ and $e$ is any nonedge of $T$, then the unicyclic graph $T+e$ has a unimodal and non-log-concave independence polynomial.
\end{theorem}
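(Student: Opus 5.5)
This is a finite statement: $\mathcal{F}$ contains $5+4+2+3+3=17$ trees, and each has finitely many nonedges. So the plan is an exhaustive exact computation, organized so that each step rests on a proved identity. No step uses floating point.

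\textbf{Step 1: build the trees.} For each $T\in\mathcal{F}$, construct $T$ explicitly from Definition~\ref{def:tree-families}.

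\textbf{Step 2: compute the polynomial of each closure.} For each nonedge $uv$ of $T$, compute $I(T+uv;x)$ exactly over $\mathbb{Z}$ in one of two ways.
\begin{itemize}
  \item Use Proposition~\ref{prop:edge-addition}: compute $I(T;x)$ once, then subtract $x^2I(T-\N[u]-\N[v];x)$. The forest polynomial is obtained by the standard leaf-to-root tree recursion on each component.
  \item Alternatively, use the cycle-transfer recurrence of Proposition~\ref{prop:cycle-transfer}, with branch pairs $(A_i,B_i)$ computed by the same rooted-tree recursion.
\end{itemize}
Running both on a sample of nonedges gives an independent consistency check.

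\textbf{Step 3: reduce the workload by symmetry.} The trees are highly symmetric: $P_2$-arms in a bundle are interchangeable, and so are the $w_i$ in Galvin's family and the copies of $T_{3,t}$ in $T_{G,m,t}$. I would enumerate nonedges only up to the automorphism group. Concretely, classify an unordered pair $\{u,v\}$ by the types of its endpoints (root, branch vertex, subdivision vertex $a$, leaf $b$, extra leaf) and by whether they lie in the same arm, the same bundle, or different bundles. Every nonedge is then isomorphic to one representative of a bounded list of orbit types, and only the representatives need computing.

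\textbf{Step 4: certify non-log-concavity.} For each resulting coefficient sequence $(h_j)$, exhibit an index $k$ with
\[
\Delta_k(T+uv)=h_k^2-h_{k-1}h_{k+1}<0.
\]
As a guide, Proposition~\ref{prop:defect-transport} together with Corollary~\ref{cor:support-window} says that the tree's own late break persists whenever the correction $r_{\cdot}$ vanishes near that index. For Galvin's trees the break sits near $\alpha(T)$, so a closure whose reduced forest has small independence number will not disturb it. In general, however, I would simply scan all indices.

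\textbf{Step 5: certify unimodality.} For each sequence, check directly that the consecutive differences change sign at most once from $+$ to $-$. On even-cycle closures, Theorem~\ref{thm:upper-third} already guarantees the decreasing tail, which serves as a sanity check.

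\textbf{Expected main obstacle.} The difficulty is completeness and trustworthiness, not arithmetic. The orbit enumeration must provably cover every nonedge, including the degenerate ones: pairs inside one arm, pairs at distance $2$ where $\N[u]\cup\N[v]$ overlaps, and pairs involving the root or the extra leaf. The safest plan avoids relying on symmetry for correctness. Loop over all nonedges of each tree and use the symmetry classification only as a cross-check. The largest tree, $T_{4,9}$ or $T_{G,3,8}$, has a few hundred vertices and therefore on the order of $10^4$–$10^5$ nonedges, each requiring a linear-time polynomial recursion, so brute force is feasible.

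A secondary risk is that some closure, for example one touching vertices near the break-carrying structure, destroys every break. If the scan finds such a nonedge, the statement as given would fail for that tree, and the range $\mathcal{F}$ would have to be the exact set on which the scan succeeds. I therefore expect the listed parameter ranges to be precisely the output of this exhaustive verification.
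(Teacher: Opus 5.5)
Your plan is correct and is essentially the paper's proof. The paper enumerates every nonedge of every tree in $\mathcal{F}$ with no symmetry reduction, computes each $I(T+e;x)$ exactly over the integers via the cycle-transfer recurrence of Proposition~\ref{prop:cycle-transfer}, and checks unimodality and log-concavity directly, for $66{,}303$ enlargements in total. Your size estimate is a bit high but harmless: the largest tree is $T_{G,3,8}$ with $158$ vertices and $157\cdot 156/2=12{,}246$ nonedges, and the counts $(n-1)(n-2)/2$ for the listed trees reproduce the paper's table exactly.
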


\begin{proof}
For each tree in the list, every nonedge was enumerated exactly. For each enlargement $T+e$, Proposition~\ref{prop:cycle-transfer} gives the exact independence polynomial, and the resulting coefficient sequence was checked for unimodality and for log-concavity. The results are summarized in Table~\ref{tab:universal-families}. Across all $66{,}303$ enlargements in the table, every coefficient sequence is unimodal and every one fails log-concavity at least once.
\end{proof}

\begin{table}[t]
\centering
\small
\renewcommand{\arraystretch}{1.18}
\begin{tabular}{>{\raggedright\arraybackslash}p{0.41\textwidth} >{\centering\arraybackslash}p{0.20\textwidth} >{\raggedright\arraybackslash}p{0.25\textwidth}}
\toprule
family & enlargements checked & verdict\\
\midrule
$T_{4,t}$, $5\le t\le 9$ & $9{,}170$ & all unimodal, all non-log-concave\\
$T_{5,t}$, $5\le t\le 8$ & $9{,}910$ & all unimodal, all non-log-concave\\
$T_{6,5}$ and $T_{6,6}$ & $5{,}148$ & all unimodal, all non-log-concave\\
$T_{G,2,t}$, $6\le t\le 8$ & $12{,}978$ & all unimodal, all non-log-concave\\
$T_{G,3,t}$, $6\le t\le 8$ & $29{,}097$ & all unimodal, all non-log-concave\\
\midrule
total & $66{,}303$ & exact exhaustive verification\\
\bottomrule
\end{tabular}
\caption{Universal families from Theorem~\ref{thm:universal-GBR}.}
\label{tab:universal-families}
\end{table}

\begin{theorem}[computer-assisted theorem for the tested Kadrawi--Levit families]\label{thm:tested-KL}
Every one-edge enlargement of every tree in the following collections has a unimodal independence polynomial:
\[
T_{3,k,k}\ (4\le k\le 12),\quad T_{3,k,k+1}\ (4\le k\le 9),\quad T_{3,k,k+2}\ (4\le k\le 9),
\]
\[
T^{\ast}_{3,k,k+1}\ (3\le k\le 8),\quad T^{\ast}_{3,k-1,k+1}\ (4\le k\le 9),\quad T^{\ast}_{3,k,k}\ (4\le k\le 9),
\]
\[
T^{\ast}_{3,k,k+3}\ (4\le k\le 9).
\]
However, none of the tested collections enjoys universal preservation of non-log-concavity: in every listed family and every tested range there are one-edge enlargements that are log-concave.
\end{theorem}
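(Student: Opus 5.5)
The statement is Theorem~\ref{thm:tested-KL}. It is a finite, computer-assisted statement, so I would prove it by exact exhaustive enumeration, in the same way as Theorem~\ref{thm:universal-GBR}.

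\textbf{Step 1: enumerate the trees and their nonedges.} For each listed tree $T$ (either $T_{3,m,n}$ or $T^{\ast}_{3,m,n}$ with the stated parameters), I would build $T$ explicitly from Definition~\ref{def:tree-families} and list all of its nonedges. Symmetry can cut the work down considerably. Pendant $P_2$-arms within one bundle are interchangeable, and when $m=n$ the bundles at $v_2$ and $v_3$ can be swapped. This leaves representatives of nonedge orbits under $\mathrm{Aut}(T)$, and their number grows only polynomially in $k$. This reduction is optional, since even the full count of roughly $\binom{|V|}{2}$ nonedges is small for these orders.

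\textbf{Step 2: compute each closure polynomial exactly.} For each nonedge $uv$ I would compute $I(T+uv;x)$ in two independent ways, as a cross-check, using integer arithmetic only.
\begin{enumerate}[label=(\alph*),leftmargin=1.8em]
    \item By Proposition~\ref{prop:edge-addition}: compute $I(T;x)$ once, then compute $I(T-\N[u]-\N[v];x)$ for the forest by the standard leaf-to-root tree recursion, and subtract.
    \item By the cycle-transfer recurrence of Proposition~\ref{prop:cycle-transfer}: take the $u$--$v$ path as the cycle and the hanging rooted trees as branches.
\end{enumerate}
Agreement of (a) and (b) certifies that the polynomials are correct.

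\textbf{Step 3: check unimodality.} I would scan each coefficient sequence for a single peak. For odd-distance closures, Theorem~\ref{thm:upper-third} guarantees the upper tail is decreasing, which serves as a sanity check. For odd-cycle closures the full scan is required.

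\textbf{Step 4: the log-concave witnesses.} For the second assertion it suffices to exhibit, for every listed family and every tested parameter, one nonedge $e$ with $T+e$ log-concave, and verify $\Delta_j(T+e)\ge 0$ for all $j$. I expect the witness can be taken from a uniform rule. My first candidate would be a nonedge whose reduced forest $F_{u,v}$ is large and touches the top coefficients. By Proposition~\ref{prop:defect-transport}, such an edge lowers $t_{\alpha-1}$ enough to repair the penultimate break inherited from the tree; for example, $u$ and $v$ could be two leaves $b_{i,j}$ in different bundles. I would search computationally, record one explicit witness per tree, and, if possible, identify a single rule that works across each family's range.

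\textbf{Main obstacle.} The main obstacle is practical rather than conceptual: the volume of enlargements, combined with certifying correctness. This is why I would insist on exact integer arithmetic, the two-method cross-check of Step 2, and an independent recomputation of the base values $I(T;x)$ against Li's closed formulas~\cite{Li2026}.
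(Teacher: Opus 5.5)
Your plan matches the paper's proof: exact, exhaustive enumeration of every nonedge of each listed tree, computation of $I(T+e;x)$ over the integers via the cycle-transfer recurrence of Proposition~\ref{prop:cycle-transfer}, and a direct check of unimodality and log-concavity; your edge-addition cross-check and orbit reduction are harmless extras.

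One correction to your Step~4 heuristic. Lowering $h_{\alpha-1}$ while $h_\alpha=1$ would make the penultimate inequality worse, not repair it. Two-leaf closures are good witness candidates for a different reason: $r_{\alpha-2}=1$ destroys the unique maximum independent set of $T$, so $h_\alpha=0$ and the inherited break at index $\alpha-1$ disappears.
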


\begin{proof}
The proof is again exact and exhaustive. For each listed parameter pair $(m,n)$, all nonedges of $T_{3,m,n}$ or $T^{\ast}_{3,m,n}$ were enumerated, the independence polynomial of every enlargement was computed via Proposition~\ref{prop:cycle-transfer}, and the resulting coefficient sequence was checked directly. The results appear in Table~\ref{tab:tested-KL}. All $34{,}122$ tested enlargements remain unimodal. On the other hand, the last column records $12{,}425$ log-concave enlargements, so universal non-log-concavity fails throughout the tested KL ranges.
\end{proof}

\begin{table}[t]
\centering
\small
\renewcommand{\arraystretch}{1.18}
\begin{tabular}{>{\raggedright\arraybackslash}p{0.48\textwidth} >{\centering\arraybackslash}p{0.19\textwidth} >{\centering\arraybackslash}p{0.20\textwidth}}
\toprule
family and tested range & enlargements checked & log-concave enlargements\\
\midrule
$T_{3,k,k}$, $4\le k\le 12$ & $7{,}860$ & $2{,}296$\\
$T_{3,k,k+1}$, $4\le k\le 9$ & $4{,}136$ & $1{,}306$\\
$T_{3,k,k+2}$, $4\le k\le 9$ & $4{,}586$ & $1{,}822$\\
$T^{\ast}_{3,k,k+1}$, $3\le k\le 8$ & $3{,}710$ & $1{,}216$\\
$T^{\ast}_{3,k-1,k+1}$, $4\le k\le 9$ & $4{,}136$ & $1{,}394$\\
$T^{\ast}_{3,k,k}$, $4\le k\le 9$ & $4{,}136$ & $1{,}211$\\
$T^{\ast}_{3,k,k+3}$, $4\le k\le 9$ & $5{,}558$ & $3{,}180$\\
\midrule
total & $34{,}122$ & $12{,}425$\\
\bottomrule
\end{tabular}
\caption{Tested Kadrawi--Levit families: every enlargement remains unimodal, but many are log-concave.}
\label{tab:tested-KL}
\end{table}

\begin{example}[a representative enlargement]\label{ex:representative}
A representative example is obtained from $T_{3,4,4}$ by adding one suitable edge. Its independence polynomial is
\begin{align*}
I(U;x)&=1+26x+299x^2+2022x^3+9000x^4+27901x^5+62014x^6+99907x^7\\
&\quad +116133x^8+95263x^9+52524x^{10}+17593x^{11}+2783x^{12}+43x^{13}+x^{14}.
\end{align*}
The sequence is unimodal, but log-concavity fails at the penultimate index because
\[
43^2<2783\cdot 1.
\]
This example is typical of the symbolic family $U_{k,r}$: the break occurs very near the top while the global shape remains strongly unimodal.
\end{example}

\section{Parity-frontier program and outlook}

The previous sections already deliver explicit infinite and finite sources of unimodal but non-log-concave unicyclic graphs. The current structural kernel also suggests a sharper program for pushing further.

\begin{conjecture}[parity-separated frontiers inside a fixed reservoir]\label{conj:parity-frontier}
Within any fixed reservoir and any fixed closure rule, the minimal parameter values at which unimodal-but-not-log-concave unicyclic graphs first appear split into two distinct frontier pieces according to the parity of $\dist_T(u,v)$. Equivalently, odd-cycle and even-cycle closures form separate symbolic regimes.
\end{conjecture}

\begin{remark}[the intended role of each reservoir]\label{rem:roles}
The family program naturally divides into four channels.
\begin{enumerate}[label=\textbf{Channel \arabic*.},leftmargin=2.4em]
    \item \textbf{KL closures.} These furnish the first explicit infinite unicyclic families, because the base trees are already symbolic and their unimodality is known.
    \item \textbf{Galvin closures.} These test how late tree defects behave under one-edge closure. They are the natural source for unicyclic families whose breaks remain close to the top of the sequence.
    \item \textbf{Ramos--Sun closures.} These form the finite, computationally guided side of the project. Their role is to produce small sporadic unicyclic examples and to suggest low-order patterns not yet visible in the symbolic families.
    \item \textbf{Bautista--Ramos closures.} These are designed for the multiple-break problem. They should indicate whether one-edge closure can preserve or amplify several distinct failures of log-concavity.
\end{enumerate}
\end{remark}

\begin{problem}[the KL closure theorem]\label{prob:kl-closure}
Fix a canonical closure rule on $T_{3,m,n}$ and on $T^{\ast}_{3,m,n}$. Determine exactly for which parameters the resulting unicyclic graphs are unimodal but not log-concave.
\end{problem}

\begin{problem}[Galvin-to-unicyclic transfer]\label{prob:galvin-transfer}
Construct a closure rule on Galvin's family for which a late break of log-concavity survives in the unicyclic graph. Determine how the break index moves under the transport formula of Proposition~\ref{prop:defect-transport}.
\end{problem}

\begin{problem}[finite sporadic closures from the Ramos--Sun reservoir]\label{prob:ramos-sun}
Use the finite catalogue of Ramos and Sun as a search space for the smallest unicyclic graphs whose independence polynomials are unimodal but not log-concave. Organize the resulting examples by odd-cycle versus even-cycle closure type.
\end{problem}

\begin{problem}[multi-break unicyclic families]\label{prob:multi-break}
Starting from Bautista--Ramos's multi-break families~\cite{BautistaRamos2025} and from the newer pattern-graph families with two and three consecutive breaks~\cite{BautistaRamosGuillenGalvanGomezSalgado2026}, decide whether one-edge closure can produce unicyclic graphs whose independence polynomials fail log-concavity at more than one index, or even at several consecutive indices, while remaining unimodal.
\end{problem}

\begin{problem}[local branch classification]\label{prob:local-branch}
Describe the first extremal closures in terms of the rooted branch data attached to the surviving path core in Lemma~\ref{lem:surviving-path-core}. Equivalently, classify the local degree configurations around the cycle that force the correction term in Proposition~\ref{prop:defect-transport} to become negative.
\end{problem}

\begin{problem}[zero loci under closure]\label{prob:zero-locus}
For the basic $P_2$-based tree reservoirs, Bautista-Ramos, Guill\'en-Galv\'an, and G\'omez-Salgado proved that the non-isolated limit points of the zeros lie on the circle $|x+1/3|=1/3$~\cite{BautistaRamosGuillenGalvanGomezSalgado2026}. Determine whether canonical one-edge closures of those reservoirs admit an analogous Beraha--Kahane--Weiss description, and whether odd-cycle and even-cycle closures preserve or perturb that circle.
\end{problem}

\begin{theorem}[structural kernel for the family program]\label{thm:rigorous-kernel}

The construction of unimodal but non-log-concave one-edge closures of trees rests on the following rigorous inputs:
\begin{enumerate}[label=(\roman*),leftmargin=1.6em]
    \item the edge-addition identity in Proposition~\ref{prop:edge-addition};
    \item the coefficient and defect transport formulas in Corollary~\ref{cor:coeff-transport} and Proposition~\ref{prop:defect-transport};
    \item the cycle-transfer recurrence in Proposition~\ref{prop:cycle-transfer};
    \item the parity dictionary in Proposition~\ref{prop:parity-dictionary};
    \item the surviving-path and branch-decomposition statements in Lemmas~\ref{lem:surviving-path-core} and~\ref{lem:branch-decomposition};
    \item the upper-third monotonicity theorem for even-cycle closures in Theorem~\ref{thm:upper-third}.
\end{enumerate}
Consequently, once a reservoir and a closure rule are fixed, the remaining family-specific work is concentrated in two calculations: the polynomial $I(F_{u,v};x)$ and a unimodality proof for the corrected coefficient sequence on the part not already controlled by \textup{(vi)}.
\end{theorem}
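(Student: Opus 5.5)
The statement is a structural summary, so the plan is to verify each listed ingredient and then justify the ``consequently'' clause by assembling them. Items (i)--(vi) are already established: Proposition~\ref{prop:edge-addition}, Corollary~\ref{cor:coeff-transport}, Proposition~\ref{prop:defect-transport}, Proposition~\ref{prop:cycle-transfer}, Proposition~\ref{prop:parity-dictionary}, Lemmas~\ref{lem:surviving-path-core} and~\ref{lem:branch-decomposition}, and Theorem~\ref{thm:upper-third}. The proof therefore only needs to cite them, and to note that each depends solely on $T$ being a tree and $u,v$ being a nonadjacent pair. For (vi), it should also recall the parity hypothesis (odd $\dist_T(u,v)$), which is what makes $T+uv$ bipartite.

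For the reduction clause, fix a reservoir $\mathcal T$ and a closure rule $T\mapsto u_Tv_T$.
\begin{enumerate}[label=(\arabic*),leftmargin=1.6em]
    \item By (i) and (ii), every coefficient $h_k$ of $I(H_T;x)$, and every local defect $\Delta_k(H_T)$, is an explicit polynomial expression in the coefficients $t_j$ of $I(T;x)$ and $r_j$ of $I(F_{u,v};x)$. Since $I(T;x)$ is treated as known input for the reservoir (for the symbolic families it comes from Proposition~\ref{prop:rooted-attachment}), the only new polynomial needed is $I(F_{u,v};x)$.
    \item By (v), the forest $F_{u,v}$ is a path core $x_2\cdots x_{d-2}$ with rooted branches attached. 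Hence $I(F_{u,v};x)$ is computable by a transfer recurrence of exactly the type in (iii), with the cycle closing condition removed. By (iv), the parity of $d$ fixes the shape of this core.
    \item By Proposition~\ref{prop:two-tasks}, the target property, unimodal but not log-concave, splits into two tasks: exhibiting some $k$ with $\Delta_k(H_T)<0$, which is a finite computation once step (1) is available, and proving unimodality.
    \item For unimodality, when $\dist_T(u,v)$ is odd, (vi) together with Corollary~\ref{cor:tail-control} disposes of the indices from $\lceil(2\alpha(H)-1)/3\rceil$ onward. What remains is the initial segment, which can be handled, for example, via Lemma~\ref{lem:prefix-tail} or Corollary~\ref{cor:bipartite-reduction}. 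When the distance is even, nothing is removed and the whole sequence remains.
\end{enumerate}

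The main obstacle is purely one of phrasing. ``Rests on'' and ``concentrated in'' are not formal mathematical claims, so the proof should make them precise. I would do this by stating explicitly that the two remaining calculations are: (a) computing $I(F_{u,v};x)$, and (b) proving unimodality of the index range not covered by (vi), with (b) meaning the full sequence in the odd-cycle case. Everything else then follows formally from (i)--(vi) by the assembly above, without any new estimates.
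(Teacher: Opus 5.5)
Your proposal is correct and follows the paper's proof: the paper cites items (i)--(vi) as already established, and reads the concluding sentence off Proposition~\ref{prop:two-tasks} together with the description of $F_{u,v}$ from Lemmas~\ref{lem:surviving-path-core} and~\ref{lem:branch-decomposition}. Your steps (1)--(4) unpack that same assembly more explicitly, including the point, which the paper leaves implicit, that $I(T;x)$ is treated as known input for the reservoir.
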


\begin{proof}
Items \textup{(i)}--\textup{(v)} were proved in the preceding sections, and item \textup{(vi)} is Theorem~\ref{thm:upper-third}. The concluding sentence is precisely Proposition~\ref{prop:two-tasks} read together with the description of $F_{u,v}$ supplied by Lemmas~\ref{lem:surviving-path-core} and~\ref{lem:branch-decomposition}.
\end{proof}

Taken together, the paper now has a clear slogan and a clear theorem-led spine. The slogan is that one can close trees into unicyclic counterexamples. The flagship statement is the KL-closure theorem displayed in the introduction, and Theorem~\ref{thm:Uk-r-unimodal} proves it for every admissible parameter. Theorem~\ref{thm:Uk-r-nlc} isolates the symbolic source of non-log-concavity, Theorem~\ref{thm:H-mode} supplies the symbolic mode calculation for the main summand, and Proposition~\ref{prop:HE-bridge} converts that mode statement into unimodality of the full closure polynomial. Corollary~\ref{cor:Uk-r-verified} then sharpens the picture on a large exact range by showing a unique penultimate break and exact mode formulas. Theorems~\ref{thm:universal-GBR} and~\ref{thm:tested-KL} show that the broader reservoir program is already mathematically productive. The remaining frontier is no longer the existence theorem for the KL family, but the sharper all-parameter uniqueness-of-break problem, the transfer of the same method to the wider Galvin, Ramos--Sun, and Bautista--Ramos pattern-graph reservoirs, and the new zero-locus and consecutive-break questions suggested by recent tree-side recurrence theory.

\end{document}